\newenvironment{lesn}{\begin{linenomath}\begin{esn}}{\end{esn}\end{linenomath}}
 \theoremstyle{plain}
\newtheorem{theorem}{Theorem}
\newtheorem{lemma}[theorem]{Lemma}
\newtheorem{corollary}[theorem]{Corollary}
\newtheorem{proposition}[theorem]{Proposition}
\theoremstyle{definition}
\newtheorem*{definition}{Definition}
\newtheorem*{remark}{Remark}
\newcommand{\excspace}{\ensuremath{E}}
\newcommand{\dunderline}[1]{\underline{\underline{#1}}}
\newcommand{\rkmes}{\Xi}
\title[Splitting trees II]{Totally ordered measured trees and splitting trees with infinite variation II: 
\\ Prolific skeleton decomposition}
\author{Amaury Lambert}
\address{Laboratoire de Probabilit\'es, Statistique et Mod\'elisation (LPSM)\\
Sorbonne Universit\'e, CNRS, Paris, France\\
Case courrier 188\\
4, Place Jussieu\\
75252 PARIS Cedex 05}
\author{Ger\'onimo Uribe Bravo}
\address{Instituto de Matem\'aticas\\ 
%UNAM \\
Universidad Nacional Aut\'onoma de M\'exico\\
\'Area de la Investigaci\'on Cient\'ifica, Circuito Exterior, Ciudad Universitaria\\ Coyoac\'an, 04510. Ciudad de M\'exico, M\'exico }
\newcommand{\cadlag}{c\`adl\`ag}
\newcommand{\defin}[1]{{\bf #1}}
\newcommand{\sko}{\ensuremath{\mathbf{D}}}
\newcommand{\re}{\ensuremath{\mathbb{R}}}
\newcommand{\paren}[1]{\ensuremath{\left( #1\right) }}
\newcommand{\F}{\ensuremath{\mathscr{F}}}
\newcommand{\p}{\mathbb{P}}
\newenvironment{esn}{\begin{equation*}}{\end{equation*}}
\newcommand{\imf}[2]{\ensuremath{#1\!\paren{#2}}}
\newcommand{\se}{\ensuremath{\mathbb{E}}}
\DeclareMathOperator{\cb}{CB}
\DeclareMathOperator{\cbi}{CBI}
\newcommand{\bra}[1]{\ensuremath{\left[ #1\right] }}
\newcommand{\z}{\ensuremath{\mathbb{Z}}}
\newcommand{\na}{\ensuremath{\mathbb{N}}}
\newcommand{\set}[1]{\ensuremath{\left\{ #1\right\} }}
\newcommand{\indi}[1]{\si_{#1}}
\newcommand{\si}{{\ensuremath{\bf{1}}}}
\newcommand{\esp}[1]{\ensuremath{\se\! \left( #1 \right)}}
\newcommand{\realtree}{\ensuremath{\re}\nbd tree}
\newcommand{\bb}[1]{\mathbb{#1}}
\newcommand{\nbd}{\nobreakdash -}
\newcommand{\leb}{\text{Leb}}
\newcommand{\imi}[2]{#2^{-1}\!\paren{#1}}
\newcommand{\q}{\ensuremath{ \bb{Q}  } }
\newcommand{\sag}[1]{\sigma\!\paren{#1}}
\newcommand{\proba}[1]{\ensuremath{\sip\! \left( #1 \right)}}
\newcommand{\sip}{\mathbb{P}}
\newcommand{\mc}[1]{\ensuremath{\mathscr{#1}}}
\newcommand{\G}{\ensuremath{\mc{G}}}
\newcommand{\tr}[1]{\ensuremath{\mathbf{#1}}}
\renewcommand{\d}[1]{\ensuremath{\operatorname{d}\!{#1}}}
\newcommand{\eps}{\ensuremath{ \varepsilon}}
\newcommand{\fun}[3]{\ensuremath{#1:#2\to #3}}
\newcommand{\fund}[3]{\ensuremath{#1:#2\mapsto #3}}
\subjclass[2010]{
60G51%L\'evy processes
, 60J80%Branching processes
, 05C05 %trees
, 92D25 % Population dynamics
}
\thanks{GUB's research is supported by
CoNaCyT
%CONACYT 
grant FC-2016-1946 and UNAM-DGAPA-PAPIIT grant IN115217.  
AL thanks the \emph{Center for Interdisciplinary Research in Biology} 
(Coll\`ege de France, Paris) for funding.} %%% 
\begin{document}
%\linenumbers
\begin{abstract}
%We continue with the investigation on splitting trees and their relationship to L\'evy processes. 
The first part of this paper (\cite{splitting1})  introduced splitting trees, 
those chronological trees admitting the self-similarity property where individuals give birth, 
at constant rate, to iid copies of themselves. 
It also established the intimate relationship between splitting trees and L\'evy processes. 
The chronological trees involved were formalized as Totally Ordered Measured (TOM) trees.

The aim of this paper is to continue this line of research in two directions: 
we first decompose locally compact TOM trees in terms of their prolific skeleton 
(consisting of its infinite lines of descent). 
When applied to splitting trees, 
this implies the construction of the supercritical ones (which are locally compact) 
in terms of the subcritical ones (which are compact) 
grafted onto a Yule tree (which corresponds to the prolific skeleton). 

%Remove?
As a second (related) direction, we study the genealogical tree associated to our chronological construction. 
This is done through the technology of the height process introduced by \cite{MR1954248}. 
In particular we prove a Ray-Knight type theorem 
which extends the one for (sub)critical L\'evy trees to the supercritical case.

%Propose: 
%
%As a second (related) direction, we study a particular subordination (change of metric) in our construction, 
%using the technology of the height process introduced in \cite{MR1954248}. 
%This results in a different TOM tree which is regenerative and for which we prove a Ray-Knight type theorem 
%which extends the one for (sub)critical L\'evy trees to the supercritical case. 
\end{abstract}
\maketitle
%\tableofcontents
\section{Introduction}
\label{introductionSection}
\subsection{Motivation}
\label{motivationSubsection}
One of the main results in this work is a Ray-Knight theorem for supercritical L\'evy trees 
which features a two-type branching process with values in $\na\times [0,\infty]$. 
Since (sub)critical L\'evy trees have been shown to correspond 
to scaling limits of (sub)critical Galton-Watson trees (as in Chapter 2 of \cite{MR1954248}), 
our Ray-Knight theorem is the continuous counterpart to the discrete result describing 
a supercritical Galton-Watson process in terms of a two-type Galton-Watson process, 
which is described in Chapter 5\S 7 of \cite{LP:book} as we now briefly recall. 

A plane tree is a combinatorial tree on which a total order has been defined, 
so that one is able to make sense of the first, second, etc offspring of each internal node. 
Using the Ulam-Harris-Neveu labelling, 
they are typically realized as subsets of the set $\mc{U}$ of words 
$u=u_1\cdots u_n$ on $\z_+=\set{1,2,\ldots}$, 
whose length $n$ is denoted $|u|$. 
The empty word is denoted $\emptyset$ and its length is zero. 
The concatenation of two words $u=u_1\cdots u_m$ and $v=v_1\cdots v_n$ 
is the word $uv$ given by $u_1\cdots u_mv_1\cdots v_m$ and with this concept, 
one can interpret the word $uj$ as the $j$-th child of $u$. 
We can then link this notion of tree with the genealogical one 
by declaring that $u$ precedes $v$ in the genealogical order, denoted $u\preceq v$, 
if there is a word $w$ such that $uw=v$. 
The set of labels $\mc{U}$ will also be equipped with the lexicographic total order. 

\begin{definition}
A \defin{plane tree} is a subset $\tau$ of $\mc{U}$ such that
\begin{enumerate}
\item $\emptyset\in\tau$ 
\item if $u=u_1\cdots u_{n+1}\in\tau\setminus\set{\emptyset}$ 
then the \defin{mother} of $u$, defined as $\imf{\pi}{u}=u_1,\ldots, u_n$, 
also belongs to $\tau$. 
\item If $u\in \tau$, there exists $\imf{k_u}{\tau}\in\na$ 
(interpreted as the quantity of descendants of $ u\in\tau$) 
such that $uj\in\tau$ if and only if $1\leq j\leq \imf{k_u}{\tau}$. 
\end{enumerate}
\end{definition}We can also define the rank of $u=u_1\cdots u_n$ as a sibling, denoted $\imf{r}{u}$, as $u_n$. 
The \defin{Lukasiewicz path} associated to a plane tree $\tau$ is the sequence $e$ 
obtained by first ordering the elements of the tree as $\emptyset=u_0<u_1<\cdots< u_{p-1}$, 
where $p=\#\tau$, and then defining  $e_0=0$ and $e_i-e_{i-1}=\imf{k_{u_i}}{\tau}-1$. 
It is characterized by being a skip-free excursion-like path: 
it starts at zero, its increments belong to $\set{-1,0,1,2,\ldots}$ 
and it remains non-negative until its last step, where it reaches $-1$. 
It is well known that finite plane trees are in bijection with the set of Lukasiewicz paths; 
we shall recall how to replicate this result in the continuum setting in 
Subsubsection \ref{treeCodedSubSubSection}. 

Galton-Watson trees were defined in \cite{MR850756} as the random plane tree $\Theta$ where: 
for a given offspring distribution $p=\paren{p_k,k\geq 1}$, 
let $\paren{\chi_u,u\in\mc{U}}$ be iid random variables with law $p$ 
and define the random tree $\Theta$ recursively constructed by
\begin{enumerate}
\item $\emptyset\in\Theta$. 
\item If $u\in\Theta$ then $uj\in\Theta$ if and only if $1\leq j\leq \chi_u$. 
\end{enumerate}
%\begin{linenomath}
%\begin{esn}
%\Theta=
%\set{\emptyset}\cup
%\set{u\in\mc{U}\setminus\set{\emptyset}:
%\emptyset\neq v\preceq u\text{ implies } \imf{r}{v}\leq \chi_{\imf{\pi}{v}}}. 
%\end{esn}
%\end{linenomath}(The 
Hence, for every $u\in\Theta$, the random quantity $\chi_u$ 
is  the quantity of descendants of $u$. 
% It might also make sense to define $\Theta$ in terms of the generations. 

Define the $n$-th generation of $\Theta$ as $\mc{G}_n=\set{u\in\Theta: |u|=n}$; 
its size will be denoted $Z_n$. 
Then, $Z$ is a Galton-Watson process with offspring distribution $p$. 
From the extinction criteria for the latter, 
it follows that $\Theta$ is finite with probability $1$ if and only if $p$ is (sub)critical: 
$\sum_k k p_k\leq 1$. 

The following definition is therefore relevant only in the supercritical case: 
\begin{definition}
Let $\tau$ be a plane tree and let $u\in\tau$. 
The \defin{subtree of $\tau$ above $u$} is the plane tree $\tau_u$ 
consisting of words $v$ such that $uv\in\tau$. 
An element $u$ of $\tau$ is called a \defin{prolific individual} of $\tau$ if $\tau_u$ is infinite. 
\end{definition}The set of prolific individuals of $\tau$ will be denoted $\mc{P}_\tau$. 
In Figure \ref{superCriticalGWTree} one has a plane tree on which prolific individuals have been identified. 

\begin{figure}
\includegraphics[width=.4\textwidth]{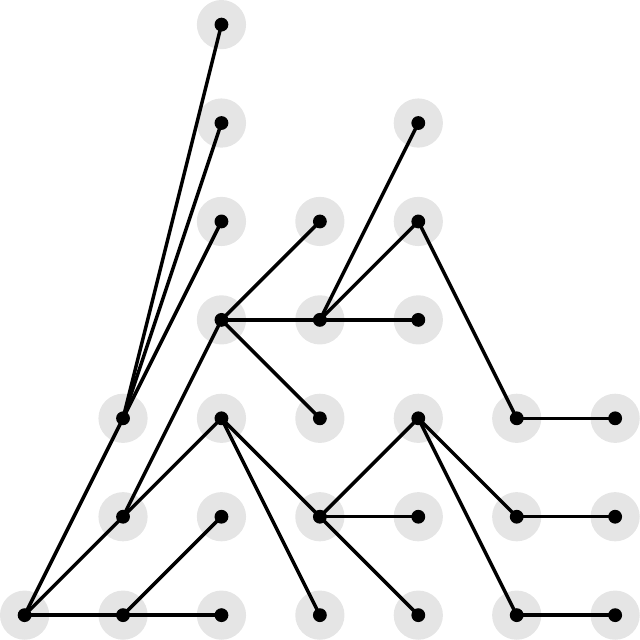}
\hfill
\includegraphics[width=.4\textwidth]{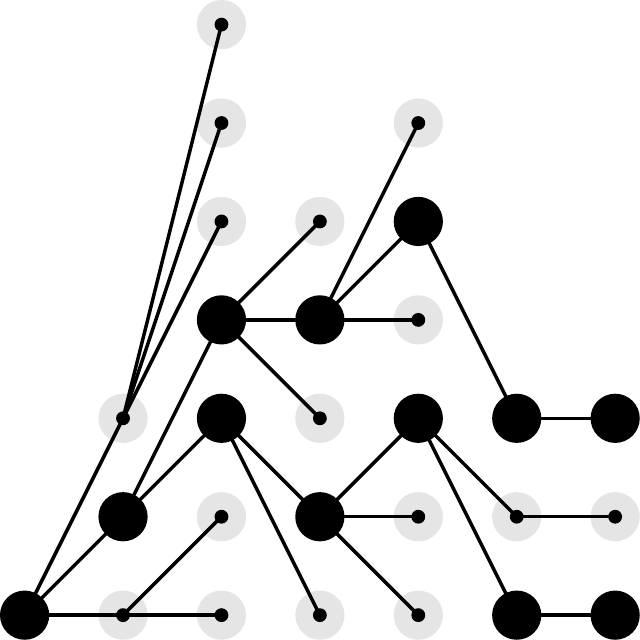}
\caption{Left: the first 7 generations of an infinite plane tree. 
Generations increase from left to right. 
On each generation, 
labels (lexicographically) increase from bottom to top. 
(Hence, the tree is $\set{\emptyset,1,2,3,11,12,21,22,31,32,33,\ldots}$). 
Right: the prolific individuals are identified by black disks. 
Notice that the root has three subtrees above it: two finite ones and an infinite one.}
\label{superCriticalGWTree}
\end{figure}

For our Galton-Watson tree $\Theta$, consider
\begin{linenomath}
\begin{esn}
Z^1_n=\# \mc{G}_n\cap \mc{P}_\Theta
\quad\text{and}\quad
Z^2_n=Z_n-Z^1_n.
\end{esn}
\end{linenomath}Then, the two-type branching process alluded to above is $\paren{Z^1,Z^2}$. 

One of our objectives will be to develop models of random real trees 
on which the above decomposition into two-type branching processes can be carried out. 

\subsection{Preliminaries}
In this short subsection, we recall the setting and results of the prequel \cite{splitting1} that we will use. 

\subsubsection{Spectrally positive L\'evy processes}
We will mainly concentrate on spectrally positive L\'evy processes. 
An adequate background is found in \cite{MR1406564}, especially Chapter VII. 
We use the canonical setup. 
There will be two canonical spaces: 
the Skorohod space $\sko$ of \cadlag\ functions $\fun{f}{[0,\infty)}{\re\cup\set{\dagger}}$  
and the (positive) excursion space 
$\excspace$ consisting of \cadlag\ functions $\fun{f}{[0,\infty)}{[0,\infty)\cup\set{\dagger}}$ 
for which there exists a lifetime $\zeta=\imf{\zeta}{f}\in [0,\infty]$ 
such that $f>0$ on $(0,\zeta)$ and $f=\dagger$ after $\zeta$. 
(As usual, $\dagger$ stands for an isolated cemetery state.) 
We recall that on both spaces, the canonical process $X$ can be defined by 
$\imf{X_t}{f}=\imf{f}{t}$ and equipped with the canonical filtration $\F_t=\sag{X_s: s\leq t}$. 

Let $\Psi$ be the Laplace exponent of a possibly killed spectrally positive L\'evy process. 
The function $\Psi$ is characterized in terms of the L\'evy quartet $\paren{\kappa, \alpha, \beta,\pi}$
where $\kappa\geq 0$, $\alpha\in\re$, $\beta\geq 0$ 
and $\pi$ is a measure on $(0,\infty)$ satisfying $\int \bra{1\wedge x^2}\, \imf{\pi}{dx}<\infty$. 
The characterization is expressed through the L\'evy-Kintchine formula as follows: 
\begin{linenomath}
\begin{esn}
\imf{\Psi}{\lambda}=
-\kappa +\alpha \lambda+\beta\lambda^2+
\int_0^\infty \bra{e^{-\lambda x}-1+\lambda x\indi{x\leq 1}}\, \imf{\pi}{dx}. 
\end{esn}
\end{linenomath}

Recall that $\Psi$ gives rise to a (sub)Markovian family of probability laws on $\sko$, 
say $\paren{\p_x,x\in\re}$, 
such that each $\p_x$ is (sub)Markovian 
and they are spatially homogeneous (the image of $\p_x$ under the mapping $f\mapsto y+f$ is $\p_{x+y}$). 
The link between $\p_x$ and $\Psi$ is: 
\begin{linenomath}
\begin{esn}
\imf{\se_x}{e^{-\lambda X_t}}=e^{t\imf{\Psi}{\lambda}}. 
\end{esn}\end{linenomath}We assume that $\Psi$ does not correspond to a subordinator, 
which is equivalent to saying that 
$\imf{\Psi}{\lambda}\to \infty$ as $\lambda\to\infty$. 
Since $\Psi$ is convex, $\Psi$ has at  most two roots. 
We let $b$ stand for the biggest root of $\Psi$ 
and define the associated Laplace exponent $\Psi^\#$ 
defined by $\imf{\Psi^\#}{\lambda}=\imf{\Psi}{\lambda+b}$. 
The Laplace exponent $\Psi^\#$ can be obtained by conditioning $\p_x$ on reaching arbitrarily low levels 
as in Lemma 7 in \cite[Ch. 7]{MR1406564} and Lemme 1 in \cite{MR1141246}. 
We say that $\Psi$ is supercritical if $b>0$ and (sub)critical otherwise.

Let $\Psi$ be a supercritical Laplace exponent 
and $\p_x$ the law of a spectrally positive L\'evy process with Laplace exponent $\Psi$ started at $x$. 
Since $X$ drifts to $\infty$ under $\p_0$, the minimum $\underline X_\infty$ of $X$ belongs to $(-\infty,0]$. 
We can then define $T_m$ as the last time the minimum of $X$ is approached as a left limit; 
note that $X$ might have a positive jump at $T_m$.
The post-minimum process $X^{\rightarrow}$ is defined as
\begin{linenomath}\begin{esn}
X^{\rightarrow }_{t}=X_{T_m+t}-X_{T_m-}.
\end{esn}\end{linenomath}Note that $X^\rightarrow$ does not start at zero if $X$ jumps at $T_m$. 
The law of this process is $\p^{\rightarrow}$. 
It has the important properties of being Markovian, 
that $X^\rightarrow_t>0$ for $t>0$ , 
and for any $t>0$, conditionally on $X_t=x>0$, 
the shifted process $X_{t+\cdot}$ has law $\p_x$ conditioned on not reaching zero. 
Later, we will assume Grey's hypothesis on $\Psi$, 
which in particular implies that the process is of infinite variation. 
In terms of $\Psi$, we will have either $\sigma>0$ 
or $\int [1\wedge x]\, \imf{\pi}{dx}$ 
and then $X$ reaches its minimum at a unique place and continuously 
(cf. Proposition 2.1 in \cite{MR0433606} or Proposition 1 of \cite{MR2978134}).

%Proposition 18 of \cite{splitting1}%\ref{timeChangeConstructionOfReflectedAndKilledLevyProcessProposition} 
%then tells us that the image of $\p^{\rightarrow}$ under reflection at level $r$ and killing upon returning to zero, 
%denoted $\p^{\rightarrow, r}$, 
%has the same law as the measure constructed as follows:
%we start with a process with law $\p^{\rightarrow}$ time-changed to remain below $r$ until its death-time, followed by processes with laws $\p_r$ time-changed to remain below $r$ which will get concatenated until one of them reaches zero. 

\subsubsection{The compact tree coded by a \cadlag\ function}
\label{treeCodedSubSubSection}
Let $\fun{f}{[0,m]}{[0,\infty)}$ be a \cadlag\ function such that $\imf{f}{m}=0$. 
The \defin{tree coded by $f$} is defined as follows. 
For $s,t\in [0,m]$, define
\begin{linenomath}
\begin{esn}
\imf{d_f}{s,t}=\imf{f}{s}+\imf{f}{t}-2\imf{m_f}{s,t}
\quad\text{where}\quad
\imf{m_f}{s,t}=\inf_{r\in [s,t]}\imf{f}{r}.
\end{esn}
\end{linenomath}Then $d_f$ is a pseudometric on $[0,m]$ and we define $\tau_f$ as 
the set of equivalence classes $[t]_f$ induced by 
the associated equivalence relationship $\sim_f$ where $s\sim_f t$ if $\imf{d_f}{s,t}=0$. 
The induced metric by $d_f$ on $\tau_f$ turns 
the space $\paren{\tau_f,d_f}$ into a compact real tree, 
whose definition we now recall. 
\begin{definition}[From \cite{MR1397084} and \cite{MR2221786}]
An \defin{\realtree}\ (or \defin{real tree}) is a metric space $\paren{\tau,d}$ satisfying the following properties:
\begin{description}
\item[Completeness] $\paren{\tau,d}$ is complete.
\item[Uniqueness of geodesics]  For all $\sigma_1,\sigma_2\in\tau$ 
there exists a unique isometric embedding\begin{linenomath}\begin{esn}
\fun{\phi_{\sigma_1,\sigma_2}}{[0,\imf{d}{\sigma_1,\sigma_2}]}{\tau}
\end{esn}\end{linenomath}such that $\imf{\phi}{0}=\sigma_1$ 
and $\imf{\phi}{\imf{d}{\sigma_1,\sigma_2}}=\sigma_2$.
\item[Lack of loops] For every injective continuous mapping $\fun{\phi}{[0,1]}{\tau}$ 
such that $\imf{\phi}{0}=\sigma_1$ 
and $\imf{\phi}{1}=\sigma_2$, the image of $[0,1]$ under $\phi$ equals 
the image of $[0,\imf{d}{\sigma_1,\sigma_2}]$ under $\phi_{\sigma_1,\sigma_2}$. 
\end{description}
A triple $\paren{\tau,d,\rho}$ consisting of a real tree $\paren{\tau,d}$ 
and a distinguished element $\rho\in\tau$ is called a \defin{rooted (real) tree}.
\end{definition}

On a rooted real tree $\paren{\tau,d,\rho}$, 
the image of $[0,\imf{d}{\sigma_1,\sigma_2}]$ under 
the isometry $\phi_{\sigma_1,\sigma_2}$ is denoted $[\sigma_1,\sigma_2]$. 
We also define the associated \emph{genealogical partial order} $\preceq$, 
where $\sigma_1\preceq \sigma_2$ if $\sigma_1\in [\rho,\sigma_2]$. 
On $\tau_f$, where the root is chosen as $[m]_f$, 
if $\sigma_i=[t_i]_f$, then $\sigma_1\preceq\sigma_2$ if and only if $\imf{m_f}{t_1,t_2}=\imf{f}{t_1}$. 

The main proposal of \cite{splitting1}, 
adapted from \cite{Duquesne:fk}, 
is to endow the compact rooted real tree $\paren{\tau_f,d_f,\rho}$ 
 with additional structure inherited from $[0,m]$: 
a total order $\leq$ where $[s]_f\leq [t]_f$ if $\sup[s]_f\leq \sup[t]_f$, 
and the measure $\mu$ given by the image of $\leb$ under the projection $t\mapsto [t]_f$. 
The triplet $\paren{\paren{\tau_f,d_f,\rho},\leq ,\mu}$ 
constitutes a compact Totally Ordered Measured (TOM) tree. 
\begin{definition}
A \defin{real tree} $\paren{\tau,d,\rho}$ is called 
\defin{totally ordered} if there exists a total order $\leq$ on $\tau$ which satisfies
%is a triple $\tr{c}=\paren{\paren{\tau,d,\rho},\leq,\mu}$ such that $\paren{\tau,d,\rho}$ is a compact real tree, $\leq $ is a total order on $\tau$  satisfying
\begin{description}
\item[Or1] $\sigma_1\preceq \sigma_2$ implies $\sigma_2\leq \sigma_1$ and
\item[Or2] $\sigma_1<\sigma_2$ implies $[\sigma_1,\sigma_1\wedge\sigma_2)<\sigma_2$.
\end{description}A totally ordered real tree is called \defin{measured} if there exists a 
%finite 
measure $\mu$ on the Borel sets of $\tau$ satisfying:
%and $\mu$ is a finite measure on the Borel sets of $\tau$ satisfying:
\begin{description}
\item[Mes1] 
%for every $\sigma_1<\sigma_2$ and $h>0$, $\imf{\mu}{\set{\sigma\in \tau: \sigma_1\leq \sigma\leq \sigma_2,\imf{d}{\sigma,\rho}\leq h}}\in (0,\infty)$, 
%%for every $\sigma\in\tau$ and every $h>0$,
%%$$\imf{\mu}{\set{\tilde\sigma\in\tau: \tilde\sigma\leq \sigma},\, \imf{d}{\tilde\sigma,\rho}\leq h}\in (0,\infty),
%%$$ 
$\mu$ is locally finite and for every $\sigma_1<\sigma_2$:
%for every $\sigma$ (that is not the $\leq$-first element of $\tau$) and every $h>0$, 
\begin{linenomath}
\begin{esn}
\imf{\mu}{\set{\sigma:\sigma_1<\sigma\leq \sigma_2}}>0. 
%\imf{\mu}{\set{\tilde\sigma\leq \sigma: \imf{d}{\rho,\tilde\sigma}\leq h+\imf{d}{\rho,\sigma}}}\in (0,\infty). 
\end{esn}
\end{linenomath}
\item[Mes2] $\mu$ is diffuse. 
\end{description}A totally ordered measured tree will be referred to as a \defin{TOM tree}. 
\end{definition}

The importance of this notion is that compact TOM trees 
are precisely those that can be coded by a function in a canonical manner
(Cf. Theorem 1 of \cite{splitting1}, 
adapted from Theorem 1.1 in \cite{Duquesne:fk}). 
In a sense, we replicate the concept of plane trees 
(a setting which has proved very useful for Galton-Watson processes)
in the continuous setting thanks to the total order and the measure. 

We now define the random TOM trees that will interest us in the compact case. 
Let $\Psi$ be a (sub)critical exponent. 
Then $\liminf_{t\to\infty} X_t=-\infty$ under $\p_0$, 
so that the cumulative minimum process $\underline X$ given by 
$\underline X_t=\inf_{s\leq t} X_s$, satisfies $\underline X_\infty=-\infty$. 
Hence, $0$ is recurrent for the Markov process $X-\underline X$ 
and we can then define $\nu=\nu^\Psi$ as the excursion measure of $X-\underline X$ 
(cf. Chapter VI in \cite{MR1406564}) . 
We then consider the measure $\eta=\eta^\Psi$ 
equal to the image of $\nu$ under the map that sends excursions into TOM trees. 

\subsubsection{Locally compact trees and their coding sequence}
\label{codingSequenceSubSubSection}
Let us now recall how to obtain a locally compact TOM tree 
out of a sequence of functions \emph{compatible under pruning}.
Let $\paren{f_n,n\geq 0}$ be a sequence of \cadlag\ functions on $[0,m_n]$. 
% Need a general version of pruning so that it also applies to the height process. 
We say that the sequence is compatible under pruning if, 
for every $n\geq 1$ there exists a set $B_n\subset[0,m_n]$ such that, 
on defining
\begin{linenomath}
\begin{esn}
\tilde B_n=\set{t\in [0,m_n]: \exists s\in B_n, [s]_{f_n}\preceq [t]_{f_n}}, 
\quad A^n_t=\imf{\leb}{[0,t]\setminus \tilde B^n}\quad\text{and}\quad C^n=(A^n)^{-1},
\end{esn}\end{linenomath}we have the equality
\begin{linenomath}\begin{esn}
f_{n-1}=f_n\circ C^n.
\end{esn}\end{linenomath}Heuristically, the set $B_n$ selects nodes on the tree coded by $f_n$ 
and the time-change $C^n$ removes whatever is on top of them. 
It follows that the compact TOM tree $\tr{c}_{n-1}$ coded by $f_{n-1}$ can be embedded into $\tr{c}_{n}$. 
Indeed, one can prove that the map 
$\fund{\phi_{n-1}}{t}{C^n_t}$ is constant on the equivalent class of $[t]_{f_{n-1}}$ 
and use this to construct the embedding. 
Under the condition\begin{linenomath}\begin{esn}
\lim_{n\to\infty}
%\inf\set{\imf{d}{\rho,\sigma}:\sigma\in B_n}
%\inf\set{\imf{f}{t}: t\in B_n}
\inf_{t\in B_n}\imf{f}{t}
=\infty
\end{esn}\end{linenomath}and reasoning as in the proof of Proposition 5 of \cite{splitting1}, 
we conclude the existence of a unique locally compact TOM tree $\tr{c}=\paren{\paren{\tau,d,\rho},\leq,\mu}$ 
such that each $\tr{c}_n$ can be embedded (in a growing manner) into $\tr{c}$ 
and such that the embeddings exhaust $\tr{c}$. 
More formally, we might define $\tilde\tau=\bigcup \set{i}\times \tau_i$ 
and then define $\tau$ as the quotient of $\tilde \tau$ under the 
equivalence relationship $(i,\sigma_i)\sim (j,\sigma_j)$ (where $i<j$, say) 
whenever $\imf{\phi_{j-1}\cdots \circ\phi_{i}}{\sigma_i}=\sigma_j$. 
(Other structural parts of $\tr{c}$ can be defined analogously.) 
Then, the embedding would just send $\sigma\in\tr{c}_i$ to $(i,\sigma)$. 
We say that $(f_n)$ is a \defin{coding sequence} for the locally compact TOM tree $\tr{c}$. 
If $\tr{\tilde c}$ is any other TOM tree with this property, 
then $\tr{c}$ can be embedded into $\tr{\tilde c}$. 
A particular case of the above construction is 
when the sequence of functions $\paren{f_n}$ are consistent under truncation at levels $r_n$, 
where the sequence $(r_n)$ is non-decreasing, 
in which the set $B_n$ consists of $t\in [0,m_n]$ such that $\imf{f_n}{t}> r_{n-1}$. 
In this case, $\tilde B_n=B_n$ and the time-change $C_n$ 
removes the set of $t$ such that $f_n(t)>r_{n-1}$ and closes up the gaps. 
We refer to this as time changing $f_n$ to remain below $r_{n-1}$.

\subsection{Statement of the results}
\label{resultsSubsection}

The locally compact TOM trees that will interest us 
come from the Laplace exponent of a supercritical Laplace exponent $\Psi$ as is now described. 
Recall that $b$ denotes the greatest root of $\Psi$ and that the associated Laplace exponent is given by $\Psi^\#$. 
The reflected process $X-\underline X$ under $\p_0$  is now transient, 
which in terms of its construction by excursions, 
means that its excursion measure charges those with infinite length. 
Let $\nu$ be the excursion measure of $X-\underline X$ under $\p$ and $\nu^{\#}$ the same excursion measure under $\p^\#$. 
Then, $\nu=\nu^\#+b\p^\rightarrow$. %, where $\p^{\rightarrow}$ denotes the law of the post-minimum process under $\p$. 
Let $\q^{\rightarrow, r}$ be the probability measure constructed by concatenating, to a process obtained by time-changing a process with law $\p^{\rightarrow}$ to remain below $r$, independent copies of a process with law $\p_r$ time-changed to remain below $r$, until the first copy reaches zero, followed by killing. 
We then define $\nu^r=\nu^{\#,r}+b\q^{\rightarrow ,r}$, where $\nu^{\#,r}$ is the image of $\nu^{\#}$ under time-change to remain below $r$. 

%Define $\nu^r$ as the image of $\nu$ under reflection below level $r$ (and killing upon reaching zero); 
%this is the image of $\nu$ under the mapping taking $X$ to $X^r$ 
%where $X^r$ equals $X-\overline X^r$ killed upon reaching zero with
%\begin{linenomath}
%\begin{esn}
%\overline X^r_t=\paren{\max_{s\leq t} X^r_s-r }^+. 
%\end{esn}\end{linenomath}It is shown in Theorem 3 of \cite{splitting1} 
By construction, the measures $\nu^r$ are consistent under truncation, 
meaning that if $r_1\leq r_2$ then $\nu^{r_1}$ is the image of $\nu^{r_2}$ under time change to remain below $r_1$.
Hence,  a unique measure $\eta^\Psi$ on locally compact TOM trees can be defined 
so that $\nu^r$ equals the image of $\eta^\Psi$ under the function which takes a tree into the contour of its truncation at level $r$. 

\defin{Splitting trees} are those whose \emph{law} is $\eta^\Psi$, either in the (sub)critical or supercritical cases. 
They have been characterized as the $\sigma$-finite laws on locally compact TOM trees satisfying a certain self-similarity property termed the splitting property in Theorem 2 of \cite{splitting1}. 

In this work, we will be interested in analyzing the measure $\eta^\Psi$. 
We will first be concerned with the descriptions of the prolific individuals. 

A particular case of the  construction of $\eta^\Psi$ is the Yule tree. 
It is obtained with the L\'evy process $X_t=-t$ killed at rate $b$, for which $\imf{\Psi}{\lambda}=\lambda-b$. 
The interpretation is that individuals have infinite life-times 
(which correspond to interpreting killing as making an infinite jump) and that they give birth at rate $b$. 
The 
%contour of the tree truncated at level $r$ 
measure $\nu^{\#,r}$ is zero, while $\q^{\rightarrow, r}$ 
has a simple description: 
%let $T_0=0$, $T_n=S_1+\cdots S_n$ for $n\geq 1$ 
%where $\paren{S_i}$ be iid exponentials of parameter $b$ and set $N_r=\min \set{i\geq 1: S_i>r}$. 
let $(T_n)$ be a Poisson point process on $[0,\infty)$ with intensity $b\, \leb$, set $S_i=T_i-T_{i-1}$  and $N_r=\min \set{i\geq 1: S_i>r}$.  
We let\begin{equation}
\label{heightProcessOfTruncatedYuleTreeDefinition}
X^r_t=\sum_{n=1}^{N_r}\bra{r-\paren{t-T_{n-1}}}\indi{T_{n-1}\leq t<T_{n}}
\end{equation}on the interval $[0, T_{N_r-1}+r]$. 
A simple consequence of this description of the Yule tree 
is that the quantity of individuals alive at time $r$, 
which evolve as the usual Yule process and correspond to the number of jumps of $X^r$ until reaching zero, 
has a geometric distribution of parameter $1-e^{-b r}$. 
This is a classical result which is usually proved using the Kolmogorov equations. 
% Backward or forward? 

As we shall see, Yule trees appear in supercritical splitting trees. 
Indeed, the latter can be obtained by first constructing a skeleton of infinite lines of descent, 
which is a Yule tree, 
and then grafting onto it supercritical splitting trees conditioned on extinction. 
The latter turn out to be a special kind of subcritical splitting tree. 
We first explore the notions of infinite lines of descent and of grafting. 

\begin{definition}
Let $\tr{c}=\paren{\paren{\tau,d,\rho}, \leq, \mu}$ be a locally compact TOM tree. 
An \defin{infinite line of descent} is an isometry $\fun{\phi}{[0,\infty)}{\tau}$ such that $t\mapsto \imf{d}{ \rho, \imf{\phi}{t}}$ is increasing. 
We say that $\sigma\in\tau$ has an infinite line of descent 
if $\sigma$ belongs to the image of an infinite line of descent. 
\end{definition}
We will now give a genealogical structure to the infinite lines of descent. 
\begin{proposition}
\label{backboneProposition}
Let $\mc{I}$ be the collection of individuals with infinite lines of descent. 
Then $\mc{I}=\emptyset$ if and only if $\tau$ is compact. 
If $\tau$ is non-compact, 
$\mc{I}$ is a non-compact connected subset of $\tau$ containing the root 
which can be given the structure of a locally compact TOM trees as follows: 
the tree structure (geodesics and lack of loops) is inherited from $\tau$, as is the total order, and there exists a naturally defined Lebesgue measure on $\mc{I}$ which assigns to any interval $[\rho,\sigma]$ its \emph{length} $\imf{d}{\rho,\sigma}$. 
Furthermore, there exists a plane tree $\tau_I\subset\mc{U}$ 
and a collection of infinite lines of descent $\paren{I_u:u\in\tau_I}$ of $\tau$, 
with images $(\mc{I}_u,u\in\tau_p)$,  which partition $\mc{I}$ as follows: 
\begin{enumerate}
\item  $\bigcup_{u\in\tau_I} \mc{I}_u=\mc{I}$ and
\item on defining  $\sigma_u=\imf{I_u}{0}$, we have $\mc{I}_u\cap \mc{I}_{\imf{\pi}{u}}=\set{\sigma_u}$ and $\mc{I}_u\cap \mc{I}_v=\emptyset$ if $u\neq \imf{\pi}{v}$ or $v\neq \imf{\pi}{u}$. 
\end{enumerate}%
%% Warning: only correct in the binary case. 
Furthermore, if $\alpha_u=\imf{d}{\rho,\sigma_u}$ for $u\in\tau_I$, 
then $\mc{I}$ can be uniquely reconstructed from the marked plane tree $\paren{\tau_I,\alpha}$. 
\end{proposition}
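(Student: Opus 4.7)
The plan is to proceed in three stages: first, establish the compactness dichotomy and verify that $\mc{I}$ inherits a locally compact real-tree structure together with a total order and a length measure; second, prove that the branching locus of $\mc{I}$ is locally discrete, which is the technical heart of the argument; and third, use this local discreteness to build $\tau_I$ and the lines $\paren{I_u}$ recursively, finishing with the reconstruction of $\mc{I}$ from the marks $\paren{\alpha_u}$.

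For the dichotomy, if $\tau$ is compact it is bounded, whereas any infinite line of descent $\phi$ satisfies $\imf{d}{\imf{\phi}{0},\imf{\phi}{t}}=t$ for all $t$, so $\mc{I}=\emptyset$. If $\tau$ is non-compact but locally compact, the balls $\ball{n}{\rho}$ are compact while $\tau$ contains points of arbitrarily large distance from $\rho$, so a diagonal extraction along a sequence of geodesics $[\rho,\sigma_n]$ with $\imf{d}{\rho,\sigma_n}\to\infty$ produces a geodesic ray issued from $\rho$. Any infinite line of descent $\phi$ may be prolonged backwards to the root because the monotonicity of $t\mapsto \imf{d}{\rho,\imf{\phi}{t}}$ forces $\imf{\phi}{0}$ to lie on every $[\rho,\imf{\phi}{t}]$; hence $\mc{I}$ coincides with the union of geodesic rays issued from $\rho$. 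In particular $\rho\in\mc{I}$, the set $\mc{I}$ is path-connected, and $[\rho,\sigma]\subset\mc{I}$ whenever $\sigma\in\mc{I}$. The same diagonal extraction shows $\mc{I}$ is closed in $\tau$, so $\paren{\mc{I},d,\rho}$ is a locally compact rooted real tree, which inherits the total order $\leq$; the length measure characterized by $\imf{\mu_{\mc{I}}}{[\rho,\sigma]}=\imf{d}{\rho,\sigma}$ is standard on real trees and immediately satisfies \textbf{Mes1}--\textbf{Mes2}.

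The technical core is to show that the branching points of $\mc{I}$ have no accumulation point. I would argue by contradiction: if an infinite sequence of distinct branching points $\paren{\sigma_n}$ accumulated at some $\sigma$, then at each $\sigma_n$ there would originate a ray of $\mc{I}$ departing transversely from the geodesic $[\rho,\sigma]$ and hence leaving any fixed compact neighborhood of $\sigma$; applying the diagonal argument of stage one to initial segments of these transverse rays produces a configuration incompatible with the local compactness of $\tau$ at $\sigma$. With local discreteness in hand, the inductive construction proceeds as follows: let $I_\emptyset$ be the infimum with respect to $\leq$ of the collection of rays from $\rho$, whose existence is guaranteed by discreteness of the branching locus of $\mc{I}$ on any ray. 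Assuming $I_u$ is constructed, the branching points of $\mc{I}$ on the image of $I_u$ form a discrete (hence at most countable) set; at each one, only finitely many rays of $\mc{I}$ depart in directions distinct from the continuation of $I_u$, and the total order enumerates them canonically. This produces children $u1,u2,\ldots$ of $u$ in $\mc{U}$ together with their rays $I_{uj}$, and the covering and intersection properties demanded by the statement hold by construction.

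Finally, to reconstruct $\mc{I}$ from $\paren{\tau_I,\alpha}$, I would take a disjoint family of half-lines $[0,\infty)_u$ indexed by $\tau_I$ and glue, for each $u\neq\emptyset$, the origin of $[0,\infty)_u$ to the point $\alpha_u-\alpha_{\imf{\pi}{u}}$ of $[0,\infty)_{\imf{\pi}{u}}$, equipping the quotient with the unique real-tree metric restricting to the usual one on each half-line; the map $(u,s)\mapsto \imf{I_u}{s}$ is then an isomorphism of TOM trees onto $\mc{I}$. The hard part of the whole argument is the local discreteness step: it is precisely what allows the canonical choice of $I_\emptyset$ and the indexing of children by the discrete set $\mc{U}$, and without it the plane-tree decomposition would be impossible.
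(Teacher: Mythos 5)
Your proposal is essentially correct and follows the same route as the paper's proof: the compact/non-compact dichotomy via a compactness (Hopf--Rinow/diagonal) extraction of a ray, the local finiteness of the prolific skeleton deduced from local compactness (the paper phrases this as finiteness of $\mc{I}\cap S_r$ for every $r$, where $S_r$ is the sphere of radius $r$), and the recursive peeling of the order-minimal infinite line of descent followed by recursion on the attached components. The one place where "hold by construction" hides an actual argument is the covering property $\bigcup_{u\in\tau_I}\mc{I}_u=\mc{I}$, which is the claim that the recursion exhausts every prolific point after finitely many steps; this is not automatic from the construction, and the paper proves it by induction on the (finite) cardinality of $\mc{I}\cap S_r$ --- precisely the local-finiteness statement you already isolated, so the fix is routine but should be written out.
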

Heuristically, the infinite lines of descent are formed out of the plane tree $\tau_I$, by stipulating that individuals $u\in\tau_I$ live an infinite amount of time, and their offspring $uj$ are born at time $\imf{d}{\sigma_{uj},\rho}$. 
In the case of the Yule tree, the birth times are the jump times of a Poisson process of rate $b$ along each infinite line of descent. 

Let us turn to the notion of grafting. 
Let $\tr{c}_i=\paren{\paren{\tau_i,d_i,\rho_i},\leq_i,\mu_i}$ be two locally  compact TOM trees 
and consider $\sigma\in \tau_1$. 
We wish to graft $\tr{c}_2$ to $\tr{c}_1$ at $\sigma$.  
\begin{definition}
The \defin{grafting} of $\tr{c}_2$ \defin{to the right} of $\sigma\in\tau_1$ is the locally compact tree 
$\tr{c}=\paren{\paren{\tau,d,\rho},\leq,\mu}$ 
defined as follows: let\begin{linenomath}\begin{esn}
%\tau=\set{1}\times \tau_1\bigcup \set{2}\times \tau_2, 
\tau=\bigcup_{i=1}^2\set{i}\times \tau_i,
\end{esn}\end{linenomath}equipped with the distance $d$ given by\begin{linenomath}\begin{esn}
\imf{d}{\paren{i,\sigma_1},\paren{j,\sigma_2}}=\begin{cases}
\imf{d_i}{\sigma_1,\sigma_2}&i=j\\
\imf{d_1}{\sigma_1, \sigma}+\imf{d_2}{\rho_2,\sigma_2}& i=1,j=2
\end{cases}
\end{esn}\end{linenomath}and rooted at $(1,\rho_1)$. 
We now define a compatible order $\leq$ by stipulating that\begin{linenomath}\begin{esn}
\paren{i,\sigma_1}\leq \paren{j,\sigma_2}\quad\text{if and only if either }\begin{cases}
i=j\text{ and }\sigma_1\leq_i\sigma_2\\
i=1,j=2\text{ and }
\sigma_1\leq \sigma
%\imf{\mu_1}{L_{\sigma_1}}< \imf{\mu_1}{L_\sigma}
\\
i=2,j=1\text{ and }
\sigma_2> \sigma
%\imf{\mu_1}{L_{\sigma_1}}\geq  \imf{\mu_1}{L_\sigma}
\end{cases}.
\end{esn}\end{linenomath}Finally, 
we extend $\mu_i$ to $\set{i}\times \tau_i$ in the obvious manner and, 
abusing notation, set $\mu=\mu_1+\mu_2$. 
\end{definition}
It can be seen that $\tr{c}=\paren{\paren{\tau,d,\rho},\leq,\mu}$ is a locally compact TOM tree.

If $f_i$ codes the compact tree $\tr{c_i}$, $\sigma=[t]_{f_1}$ 
and $t=\sup [t]_{f_1}$, 
then we can code $\tr{c}$ by the function $f$ given by\begin{linenomath}\begin{esn}
\imf{f}{s}=\begin{cases}
\imf{f_1}{s}&s< t\\
\imf{f_1}{t}+\imf{f_2}{s-t}&  t\leq s<t+\imf{\mu_2}{\tau_2}\\
\imf{f_1}{s-\imf{\mu_2}{\tau_2}}&t+\imf{\mu_2}{\tau_2}\leq s\leq \imf{\mu_1}{\tau_1}+\imf{\mu_2}{\tau_2}
\end{cases}. 
\end{esn}\end{linenomath}

One can give a more geometric construction of the Yule tree using grafting as follows. 
We start with $I_\emptyset=[0,\infty)$ (seen as a TOM tree). 
We next run a rate $b$ Poisson process along $I_\emptyset$ 
and at its jump times, 
we graft copies of $[0,\infty)$, say $I_1,I_2,\ldots$. 
The same procedure is then recursively repeated along each grafted copy. 
The tree so constructed, termed the Yule tree and denoted $I$, 
is the unique random locally compact TOM tree 
which has the same law as the tree obtained by grafting iid trees with the same law as $I$ 
on the interval $[0,\infty)$ at the jump times of an independent Poisson process.

The Yule tree is the simplest example of a locally compact splitting tree since all of its individuals live indefinitely. 
For more general locally compact splitting trees, 
we must accommodate individuals with finite and infinite lines of descent. 
However, the infinite lines of descent evolve analogously to Yule trees, 
on which compact trees are then grafted to the left and to the right. 
Locally compact trees with only one infinite line of descent above the root are called trees with a single infinite end, 
or \defin{sin trees}, following the terminology introduced in \cite{MR1102319}. 

We first  define the sin trees. 
Informally, the sin tree has left and right-hand sides: 
the left-hand side is coded by the post-minimum process of a L\'evy process 
with Laplace exponent $\Psi$ started at zero  
while the right-hand side is coded by a L\'evy process with Laplace exponent $\Psi^{\#}$ 
which \emph{starts at $\infty$} and is killed upon reaching zero. 
Formally, the sin tree is the unique random locally compact TOM tree 
whose truncation at level $r$ is the concatenation of the post-minimum process of a L\'evy process 
with Laplace exponent $\Psi$ started at zero 
and time-changed to remain below $r$ 
%(the precise definition is found in Subsection \ref{timeChangeSubsection}) 
followed by a L\'evy process with Laplace exponent $\Psi^{\#}$ which starts at $r$, 
is time-changed to remain below $r$,  and is killed upon reaching zero. 
Its law will be denoted $\Upsilon$. 
It can be seen that under $\Upsilon$, 
there exists a unique infinite line of descent from the root almost surely (cf. Proposition \ref{uniquenessInfiniteLineOfDescentProposition}). 
Define the measure $\Upsilon_{\text{tree}}$ as the unique measure which equals the law of the grafting of iid copies of $\Upsilon_{\text{tree}}$ onto $\Upsilon$ 
along the unique infinite line of descent of the latter  at heights which correspond to the jump times 
of an independent Poisson process of intensity $b$. 
%A (random  locally compact TOM real) tree with the preceding recursive construction 
%will be called an $\Upsilon$-tree. 
\begin{theorem}
\label{SupercriticalTreeFromGraftingTheorem}
Let $\Psi$ be a supercritical Laplace exponent and $b$ its largest root. 
The measure $\eta^\Psi$ on locally compact real trees can be described in terms of its restrictions to compact and non-compact trees as follows: 
%$\eta^\Psi$ equals $\eta^{\Psi^\#}$ on compact trees while on non-compact trees  
%$\eta^\Psi$ equals $b\Upsilon_{\text{tree}}$. 
%Hence: 
\begin{linenomath}\begin{esn}
\eta^{\Psi}=\eta^{\Psi^{\#}}+b\Upsilon_{\text{tree}}.
\end{esn}\end{linenomath}
\end{theorem}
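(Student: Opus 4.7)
The plan is to exploit the uniqueness of $\eta^\Psi$ as the $\sigma$-finite measure on locally compact TOM trees whose family of $r$-truncations coincides with the tree-coded version of $\nu^r=\nu^{\#,r}+b\q^{\rightarrow,r}$. I would verify that $\eta^{\Psi^\#}+b\Upsilon_{\text{tree}}$ is a $\sigma$-finite measure on locally compact TOM trees whose $r$-truncations are consistent under further truncation (this consistency is inherited from the two summands and from the defining fixed-point property of $\Upsilon_{\text{tree}}$), and then show that its $r$-truncation matches $\nu^r$ for every $r$; uniqueness then forces the identity.

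The decomposition is driven by Proposition \ref{backboneProposition}: a tree is compact if and only if it has no infinite line of descent. Since $\Psi^\#$ is (sub)critical, $\eta^{\Psi^\#}$ is supported on compact trees, and its $r$-truncation is by construction the tree-coding of $\nu^{\#,r}$, which handles the first summand of $\nu^r$. Since $\Upsilon$ has a unique infinite line of descent almost surely by Proposition \ref{uniquenessInfiniteLineOfDescentProposition}, $\Upsilon_{\text{tree}}$ is supported on non-compact trees. It therefore remains to identify the $r$-truncation of $\Upsilon_{\text{tree}}$ with the tree-coding of $\q^{\rightarrow,r}$.

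To achieve this identification, I would unfold the recursive definition of $\Upsilon_{\text{tree}}$. By definition, the $r$-truncation of $\Upsilon$ is the concatenation of the post-minimum process $\p^{\rightarrow}$ time-changed below $r$ (coding the left side of the spine together with all its finite excursions up to level $r$) with a $\Psi^{\#}$-process started at $r$, time-changed below $r$ and killed at $0$ (coding the right side of the spine up to level $r$). Inserting iid $\Upsilon_{\text{tree}}$-distributed graftings at the jump times of an independent rate-$b$ Poisson process along the spine produces, under the tree-coding, additional contour segments spliced into this concatenation at heights below $r$. The key observation is that for a $\Psi$-process started at $r$ the decomposition $\nu=\nu^{\#}+b\p^{\rightarrow}$ translates, via a Poissonian bookkeeping argument along descending trajectories, into the statement that the process behaves as a $\Psi^{\#}$-piece (a finite excursion returning towards $0$) between prolific spawning events arriving at rate $b$, each of which initiates a fresh independent $\p^{\rightarrow}$-piece. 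Iterating this identification along the nested graftings of $\Upsilon_{\text{tree}}$ collapses the Yule-grafted structure into the iid concatenation of $\p_r$-pieces described by $\q^{\rightarrow,r}$, the procedure terminating precisely when one of the concatenated pieces reaches zero. The main obstacle is exactly this bookkeeping: rigorously aligning the Yule graftings with the iid concatenation requires a careful use of the Poisson-point-process description of a spectrally positive L\'evy process combined with the strong Markov property at each level crossing, together with a final uniqueness argument (again via truncations) to conclude that the two probabilistic descriptions of the contour must agree.
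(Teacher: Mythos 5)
Your proposal is correct and follows essentially the same route as the paper: reduce the identity to an equality of truncated contour laws, observe that $\eta^{\Psi^\#}$ accounts for the $\nu^{\#,r}$ part, and identify the $r$-truncation of $\Upsilon_{\text{tree}}$ with $\q^{\rightarrow,r}$ by unfolding the recursive grafting along the spine. The ``bookkeeping'' you defer is exactly what the paper carries out, via the Williams-type decomposition of $\p_r$ at its minimum (exponential$(b)$ undershoot, $\Psi^{\#}$ pre-minimum piece, $\p^{\rightarrow}$ post-minimum piece) applied to the highest grafted locally compact subtree and then iterated using the self-similarity of $\Upsilon_{\text{tree}}$.
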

\begin{corollary}
\label{YuleTreeCorollary}
Under $\Upsilon_{\text{tree}}$, 
the TOM tree of infinite lines of descent has the same law as a Yule tree with birth rate $b$. 
\end{corollary}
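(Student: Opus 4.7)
The plan is to exploit the recursive (fixed-point) definition of $\Upsilon_{\text{tree}}$ together with the structure of infinite lines of descent provided by Proposition \ref{backboneProposition}, and to show that the tree of infinite lines of descent under $\Upsilon_{\text{tree}}$ satisfies the very fixed-point equation that characterizes the Yule tree of rate $b$.

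First I would establish a general functorial property of the backbone operation with respect to grafting: if a locally compact TOM tree $\tr{c}$ is obtained by grafting locally compact TOM trees $\tr{c}_1,\tr{c}_2,\ldots$ onto a base tree $\tr{c}_0$ at points $\sigma_1,\sigma_2,\ldots$ all lying on the backbone of $\tr{c}_0$, then the backbone of $\tr{c}$ is the grafting of the backbones of the $\tr{c}_i$ onto the backbone of $\tr{c}_0$ at those same points. The reason is geometric: any isometry $\fun{\phi}{[0,\infty)}{\tau}$ with $t \mapsto \imf{d}{\rho,\imf{\phi}{t}}$ increasing must either remain forever in $\tr{c}_0$, or leave $\tr{c}_0$ through one of the grafting points (which by hypothesis must be on the backbone of $\tr{c}_0$) and then trace an infinite line of descent inside the corresponding grafted copy.

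Specializing to $\Upsilon_{\text{tree}}$, the base tree is distributed as $\Upsilon$ and its backbone is almost surely isometric to $[0,\infty)$, viewed as a TOM tree, since the sin tree has a unique infinite line of descent (Proposition \ref{uniquenessInfiniteLineOfDescentProposition}). The grafts are iid copies of $\Upsilon_{\text{tree}}$ attached along this line at the jump times of an independent Poisson process of rate $b$. The functorial property above then shows that the backbone under $\Upsilon_{\text{tree}}$ has the same law as $[0,\infty)$ with iid copies of (backbone under $\Upsilon_{\text{tree}}$) grafted at Poisson points of rate $b$. This is precisely the fixed-point equation defining the Yule tree $I$ of rate $b$.

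To turn this fixed-point identity into a distributional equality I would argue uniqueness by a truncation argument: at any finite level $r$, only finitely many grafts along the backbone reach level $r$ (the number of such grafts is Poisson with parameter proportional to $r$), and iterating this, the truncation at level $r$ has finitely many edges and its law is determined inductively in the generations crossing level $r$. The consistency of these truncations under the projections described in Subsubsection \ref{codingSequenceSubSubSection} then pins down the law on locally compact TOM trees. The main obstacle I expect is a clean verification of the functorial property, namely that grafting introduces no new infinite lines of descent at the grafting points themselves and that the induced total order and Lebesgue length measure on $\mc{I}$ from Proposition \ref{backboneProposition} match the TOM structure produced by grafting copies of $[0,\infty)$ onto $[0,\infty)$; this should follow from the explicit geometric description of grafting but requires some bookkeeping.
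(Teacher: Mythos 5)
Your proposal is correct and follows essentially the same route as the paper: the paper's (very terse) proof likewise combines the recursive grafting definition of $\Upsilon_{\text{tree}}$ with the uniqueness of the infinite line of descent under $\Upsilon$ (Proposition \ref{uniquenessInfiniteLineOfDescentProposition}) to identify the backbone with the fixed point characterizing the Yule tree of rate $b$. Your write-up merely makes explicit two steps the paper leaves implicit, namely the compatibility of the backbone operation with grafting at backbone points and the truncation argument pinning down the fixed point.
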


We now pass to the description of the genealogical tree associated to supercritical splitting trees. 
As a motivation, consider the case where $\Psi$ is the Laplace exponent of 
a compound Poisson process with drift $-1$, as in (A) of Figure \ref{DiscreteChronologicalTree}. 
\begin{figure}
\begin{center}
\subfloat[][]{\includegraphics[width=.4\textwidth]{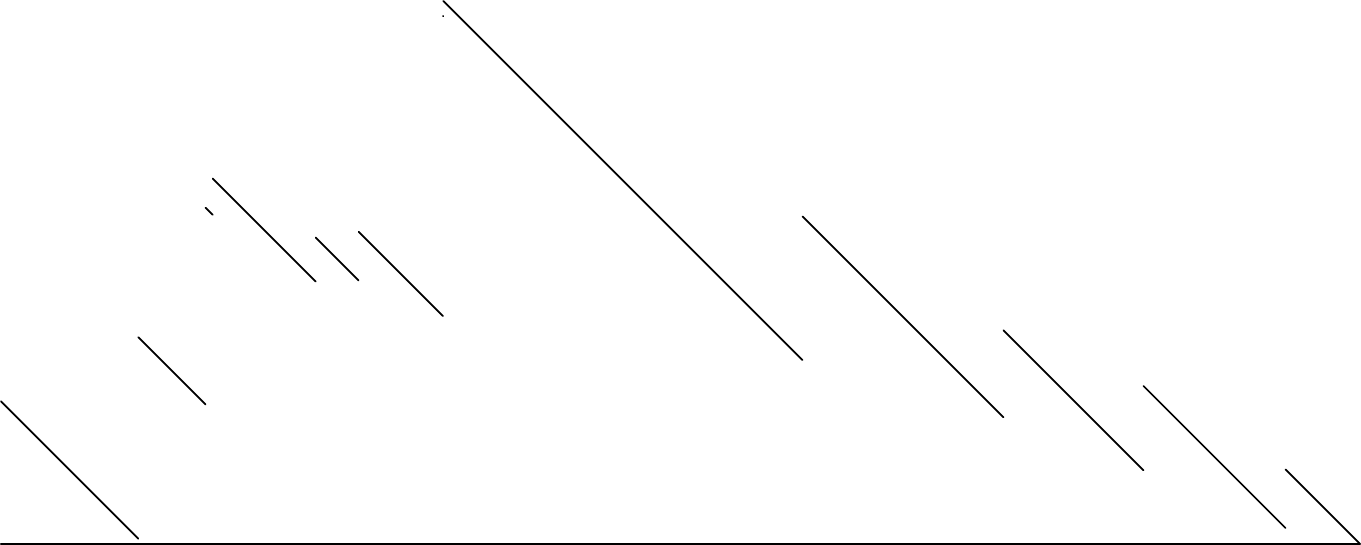}\label{codingProcessModel1}}
\hfill
\subfloat[][]{\includegraphics[width=.4\textwidth]{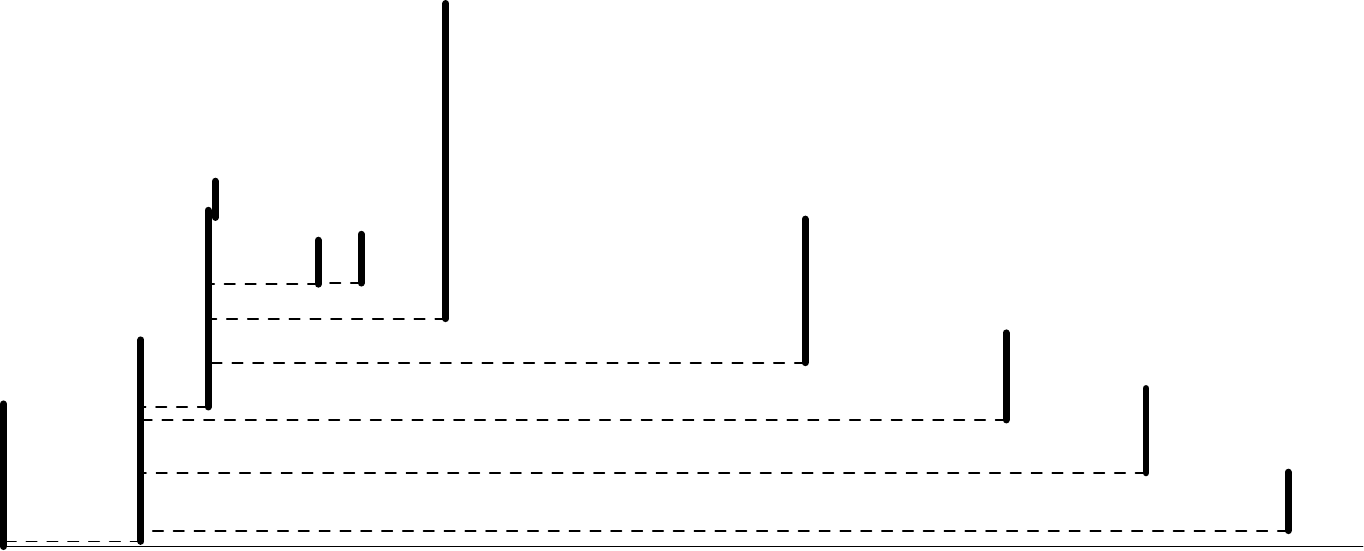}\label{treeModel1}}
\end{center}
\caption{Trajectory of a killed compound Poisson process with drift -1 (A) and of the tree it codes (B). 
The jumps in (A) correspond to the vertical segments in (B) to be joined through the horizontal dashed lines. }
\label{DiscreteChronologicalTree}
\end{figure}
From  a glance at this figure, 
the reader might note that the generation of the individual visited at time $t$ 
equals the number of subtrees grafted to the left of its ancestral line 
(in the figure, there is one such subtree for each dashed horizontal line). 
We can then count the sizes of the successive generations; 
in Figure \ref{DiscreteChronologicalTree}, the successive generation sizes are $1, 4,4$ and $1$. 
(As noted in \cite{MR2599603}, the sizes of succesive generations in the compound Poisson case 
correspond to the well known Galton-Watson process.) 
In analogy, \cite{MR1954248} define the \emph{height process} of a subcritical L\'evy process 
(or the associated TOM tree). 
Let $\Psi$ be the Laplace exponent of an infinite variation spectrally positive L\'evy process 
which is not a subordinator. 
We now assume Grey's condition on the Laplace exponent
\begin{description}
\item[Hypothesis (G)]$\int_{0+}\frac{1}{\imf{\Psi}{\lambda}}\, d\lambda<\infty$.
\end{description}Let $X^r$ be the contour of the truncation 
of a splitting tree $S$ with \emph{law} $\eta^\Psi$ at height $r$ and let $\phi^r$ be its exploration process. 
%$\tau^r$ at height $r$ of a tree $\tau$. 
%\defin{Not a good phrase. Probably need to define $X^r$. }
Based on \cite{MR1954248}, we will obtain 
%under $\eta^\psi$ 
the existence  of a norming function $a(h)$ such that  
there is a continuous process $H^r$ which agrees with%is a modification of
\begin{linenomath}
\begin{align*}
\label{genealogyCodingProcessInformalDefinition}
t\mapsto
&\liminf_{h\to 0}\frac{1}{\imf{a}{h}} \#\set{\text{Subtrees of $S^r$ to the left of $[\rho,\imf{\phi^r}{t}]$ of height greater than $h$}}. 
%\\&=\liminf_{h\to 0}\frac{1}{h}\int_0^t \indi{X^r_s- \underline X^r_{s,t}\leq h}\, ds. 
\end{align*}\end{linenomath}on a (random) dense set. By properties of L\'evy processes, 
the above limit can be expressed in terms of local times, and hence equal to
\begin{linenomath}\begin{esn}
\liminf_{k\to\infty}\frac{1}{\eps_k}\int_0^t \indi{X^r_s- \underline X^r_{[s,t]}\leq \eps_k}\, ds. 
\end{esn}\end{linenomath}for any sequence $\eps_k$ decreasing to $0$. 
We will call $H^r$ the \defin{genealogy coding process} of $X^r$. 
The quantity $H^r_t$ is our proxy for the generation of the individual visited at time $t$ in the tree coded by $X^r$. 
In the (sub)critical case there is no need to truncate to define a height process $H$, 
which codes a tree $\Gamma$ 
and has been called the \defin{L\'evy tree} in \cite{MR2147221}. 
In the supercritical case the process $H^r$ is then the coding function of a compact real tree. 
The family $\paren{H^r, r\geq 0}$ is compatible under pruning (cf. Lemma \ref{isometricEmbeddingBetweenPartsOfSupercriticalLevyTreeLemma}), so that the sequence of trees 
$\Gamma^r$ that they encode is increasing
(in the sense that $\Gamma^r$ can be embeded in $\Gamma^{r'}$ if $r\leq r'$). 
We will conclude the existence of a limit tree $\Gamma$, 
which we call the genealogical tree associated to our splitting tree. 
The law of $\Gamma$, 
denoted $\gamma$, 
can be decomposed as $\gamma^{\text{c}}+b\gamma^{\text{lc}}$, 
where $\gamma^{\text{c}}$ is the restriction of $\gamma$ to compact trees 
(and is the law of the tree coded by the height process under $\nu^\#$), 
while $\gamma^{\text{lc}}$ is the normalized restriction of $\gamma$ to non-compact trees.

Other articles generalizing L\'evy trees to the supercritical setting are \cite{MR2322700}, \cite{MR2437534} and \cite{MR2962090}. 
In the first one, the authors construct them as limits of Galton-Watson trees consistent under Bernoulli leaf percolation, while in the second and third the construction is carried out by relating the locally compact trees to the compact ones via Girsanov's theorem. 
However, the possibility of studying the height process of a sequence of L\'evy-like processes had not been considered before.

To state a Ray-Knight theorem, 
consider the process\begin{linenomath}\begin{esn}
Z^1_a=
\#\set{\sigma\in \Gamma: \sigma \text{ has an infinite line of descent and }\imf{d}{\sigma,\rho}=a}. 
\end{esn}\end{linenomath}The above quantity is finite by local-compactness. 
Recall that $\pi$ stands for the L\'evy measure of $\Psi$ and $\beta$ for its Gaussian coefficient. 
\begin{theorem}
\label{RayKnightTheorem}
Under $\gamma^{\text{lc}}$, 
the process $Z^1$ is a continuous-time non-decreasing branching process with values in $\na$ 
and jumps in $\set{1,2,,\ldots}$ which starts at $1$. 
%Its jump rate equals $b$, 
%while the distribution of its first jump $\Delta Z^1_{T_1}$ is determined by
Its jump rate from $j$ to $j+k$ (where $k\geq 1$) equals
\begin{linenomath}\begin{esn}
%\proba{\Delta Z^1_{T_1}=k}=\frac{1}{\imf{\Phi^\uparrow}{b}}\bra{
%\indi{k=1}\beta b+
%\int_0^\infty \frac{(bx)^{k+1}}{(k+1)!}e^{-b x}\, \imf{\pi}{dx}
%}
j\bra{\indi{k=1}\beta b+ \int_0^\infty \frac{b^k z^{k+1}}{(k+1)!}e^{-b z}\, \imf{\pi}{dz}}. 
\end{esn}\end{linenomath}%where $\imf{\Phi^\uparrow}{b}$ is the normalization constant 
%%, which can be expressed as the a the Laplace exponent of a subordinator, and is 
%given by\begin{linenomath}\begin{esn}
%\imf{\Phi^\uparrow}{\beta}=\beta b+\int_0^\infty (1-e^{-b y}-by e^{-by})\,\imf{\pi}{dy}. 
%\end{esn}\end{linenomath}

Furthermore, if $\imf{\delta}{\sigma}=\imf{d}{\rho,\sigma}$, then  the random measure $\mu \circ \delta^{-1}$
%the random measure on $\re_+$ given by 
%\begin{linenomath}\begin{esn}
%A\mapsto \imf{\mu}{\set{\sigma\in \tau: 
%%H_\sigma
%\imf{d}{\sigma,\rho}
%\in A}}
%\end{esn}\end{linenomath}
admits a \cadlag\ density $Z^2$. 
Finally, the process $Z=\paren{Z^1,Z^2}$ 
is a two-type branching process with values in $\na\times [0,\infty)$ started at $(1,0)$. 
Let $\se_{(n,z)}$ be its law when started at $(n,z)$. 
Then $Z$ is characterized by
\begin{lesn}
\left.\frac{d}{dt}\right|_{t=0}\imf{\se_{(n,z)}}{s^{Z^1_t}e^{-\lambda Z^2_t}}
=e^{-\lambda z}s^n\bra{z\imf{\Psi^\#}{\lambda}}+e^{-\lambda z}s^{n-1} n\frac{1}{b}\bra{\imf{\Psi}{\lambda +b(1-s)}-\imf{\Psi}{\lambda+b}}. 
\end{lesn}
\end{theorem}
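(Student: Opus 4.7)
The plan is to base the proof on Theorem \ref{SupercriticalTreeFromGraftingTheorem} and Corollary \ref{YuleTreeCorollary}, which under $\gamma^{\text{lc}}$ describe the chronological tree as a Yule backbone of prolific individuals with birth rate $b$ onto which independent $\eta^{\Psi^\#}$-trees are grafted. Since $\Psi^\#$ is (sub)critical, these grafted trees are compact, and a chronological individual is prolific in $\tr{c}$ iff its image under the height process $H$ is prolific in $\Gamma$. Consequently the prolific skeleton of $\Gamma$ is exactly the image of the Yule backbone under $H$, and $Z^1_a$ counts the nodes of this backbone at genealogical height $a$.

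For $Z^2$, along a prolific lineage in $\Gamma$ and between two consecutive prolific branchings, the measure $\mu$ is supported on grafted $\eta^{\Psi^\#}$-trees, which are (sub)critical L\'evy trees. Applying the classical Ray-Knight theorem of \cite{MR1954248} to these, their local time at level $a$ under the height process evolves as a $\Psi^\#$-continuous-state branching process, yielding the \cadlag\ density $Z^2$ of $\mu \circ \delta^{-1}$ and the first term $e^{-\lambda z} s^n z \imf{\Psi^\#}{\lambda}$ of the joint generator: $Z^1$ is frozen between prolific splits (hence the factor $s^n$), and $Z^2$ evolves as a $\Psi^\#$-CSBP whose infinitesimal generator acts on $z\mapsto e^{-\lambda z}$ by multiplication by $z\imf{\Psi^\#}{\lambda}$.

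For the prolific component $Z^1$, each of the $n$ prolific individuals at generation $a$ contributes independently. Along one prolific chronological lineage the Yule structure creates new prolific offspring at Poisson rate $b$; transferred to genealogical time this produces single splits at rate $\beta b$ from the Gaussian coefficient (the $\mathbf{1}_{k=1}\beta b$ term), and, for a L\'evy jump of size $z$, a clutch of $k\geq 1$ prolific descendants in the freshly grafted $\eta^{\Psi^\#}$-subtree with probability $\frac{b^k z^{k+1}}{(k+1)!} e^{-bz}$. The main obstacle is establishing this last distributional identity: I would derive it by analyzing the Poissonian structure of the Yule backbone conditioned to sit inside an $\eta^{\Psi^\#}$-subtree of chronological mass $z$, exploiting the recursive self-similarity of $\Upsilon_{\text{tree}}$ and the explicit construction at equation \eqref{heightProcessOfTruncatedYuleTreeDefinition}. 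Once this is in hand, integrating against $\pi$ and matching the resulting series in $s$ with the expansion of $\tfrac{1}{b}\bra{\imf{\Psi}{\lambda+b(1-s)}-\imf{\Psi}{\lambda+b}}$ in powers of $s$ yields the second term of the generator. Finally, the initial condition $(Z^1_0,Z^2_0)=(1,0)$ follows from the uniqueness of the infinite line of descent from the root of the sin-tree $\Upsilon$ under $\gamma^{\text{lc}}$ (Proposition \ref{uniquenessInfiniteLineOfDescentProposition}), and the branching property from the Markov structure of the height process under $\gamma^{\text{lc}}$.
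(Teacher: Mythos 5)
Your overall strategy --- backbone decomposition, classical Ray--Knight theorem on the compact pieces, Poissonian analysis of the prolific branch points --- is indeed the paper's strategy, but there is a genuine gap at exactly the step you flag as ``the main obstacle'', and the route you propose for closing it cannot work. You attribute the clutch distribution $\frac{b^k z^{k+1}}{(k+1)!}e^{-bz}$ to ``$k$ prolific descendants in the freshly grafted $\eta^{\Psi^\#}$-subtree'' and propose to derive it by ``analyzing the Yule backbone conditioned to sit inside an $\eta^{\Psi^\#}$-subtree of chronological mass $z$''. But $\Psi^\#$ is (sub)critical, so $\eta^{\Psi^\#}$-trees are almost surely compact and contain \emph{no} prolific individuals; the Yule backbone never enters them. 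The correct mechanism is different: a jump of size $z$ of the contour along a prolific chronological lineage creates an individual of lifespan $z$; the prolific line continues through it at the overshoot height $x$ above the future infimum, whose density on $(0,z)$ is $e^{-bx}\,dx$ (Proposition \ref{characterizationOfExcursionsAboveFutureMinimumProposition}); on the remaining gap of length $z-x$ an independent $\mathrm{Poisson}\paren{b(z-x)}$ number of new $\Upsilon_{\text{tree}}$-copies is grafted, and averaging gives $\int_0^z e^{-bx}e^{-b(z-x)}\frac{(b(z-x))^k}{k!}\,dx=\frac{b^k z^{k+1}}{(k+1)!}e^{-bz}$. Establishing this requires (i) the joint law of the jump and the overshoot over the future infimum, which is a nontrivial excursion-theoretic statement, and (ii) the fact that the height process is constant across the whole gap, so that the $k$ new prolific lines and the continuing one all emanate from the \emph{same} genealogical branch point --- otherwise $Z^1$ would make $k$ unit jumps at distinct heights rather than one jump of size $k$. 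Neither point follows from the display \eqref{heightProcessOfTruncatedYuleTreeDefinition} you invoke, which concerns only the pure Yule case $\imf{\Psi}{\lambda}=\lambda-b$.

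A second, related gap: for the full generator you need not merely that each grafted compact tree contributes a $\imf{\cb}{\Psi^\#}$ via the classical Ray--Knight theorem, but that these compact trees attach along each genealogical spine as a Poisson point process \emph{in the local-time scale $\dunderline{L}$} with a specific intensity (rate $\beta$ for the binary branch points, plus the jump part), so that each spine carries a $\imf{\cbi}{\Psi^\#,\Phi}$ with $\imf{\Phi}{\lambda}=\frac{1}{b}\bra{\imf{\Psi}{\lambda+b}-\imf{\Psi}{\lambda}}$. Transferring the chronological grafting structure to the genealogical one (time-reversal of excursions above the future infimum, identification of the drift coefficient $\beta$ of the ladder height process) is the content of Propositions \ref{PoissonDescriptionForGenealogyOfSINTreeProposition} and \ref{PoissonDescriptionSuperCriticalLevyTreeProposition} and is not addressed in your sketch. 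Without it you cannot obtain the term $e^{-\lambda z}s^{n-1}n\frac{1}{b}\bra{\imf{\Psi}{\lambda+b(1-s)}-\imf{\Psi}{\lambda+b}}$, which at $s=1$ reduces to $-ne^{-\lambda z}\imf{\Phi}{\lambda}$ and hence encodes the immigration of mass along the spines as well as the prolific branching; your proposed splitting of the generator into ``$Z^2$ alone'' plus ``$Z^1$ alone'' overlooks this cross term.
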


Two-type branching processes with state-space $[0,\infty)^2$ were introduced in \cite{MR0234531} 
and form part of the affine processes of \cite{MR1994043}. 
In \cite{MR3689968}, 
they have been given a time-change representation which gives insight into their infinitesimal behavior. 
Indeed, 
once we note that $Z^2$ does not influence the behavior of $Z^1$ 
(since non-prolific individuals cannot give rise to prolific ones), 
we see that there exist two independent L\'evy processes 
$X^1=\paren{X^{1,1}, X^{1,2}}$ and $X^2$ 
(with values in $\na\times [0,\infty)$ and $\re$ respectively) 
such that 
$Z$
 has the same law as the unique solution to
 \begin{linenomath}
\begin{align*}
Z^1_t
&=1+X^{1,1}_{\int_0^t Z^1_s\, ds}
&
Z^2_t&=X^{2}_{\int_0^t Z^2_s\, ds}+X^{1,2}_{\int_0^t Z^1_s\, ds}. 
\end{align*}\end{linenomath}%
Note that $Z^1$ has pathwise constant trajectories. 
%Hence,  on time intervals on which $Z^1=k$, 
%then $Z^2$ behaves as the solution to\begin{lesn}
%Z^2_t=X^{2}_{\int_0^t Z^2_s\, ds}+X^{1,2}_{ks}. 
%\end{lesn}Hence, $Z^2$ behaves as a $\cbi$ process (to be shortly defined). 
%However, this description does not fully capture the behaviour of the two-type process since common jumps of $Z$ are possible. 
The link between the infinitesimal behavior of $X$ and $Z$ is as follows: 
if $Z$ is started at $(k,z)$ then, as $t\to 0$, $Z_t$ behaves as 
$X^{k,z}_t=(k+X^{1,1}_{kt},z+X^2_{zt}+X^{1,2}_{kt})$. 
This can be made precise by comparing the derivatives of their semigroups at zero, 
at least for functions whose second derivative is continuous and bounded. 

The quantities
\begin{linenomath}
\begin{align*}
\imf{\Psi^1}{\lambda_1,\lambda_2}&=-\log\esp{e^{-\lambda_1 X^{1,1}_1 -\lambda_2 X^{1,2}_1}}
\intertext{and}
\imf{\Psi^2}{\lambda}&=-\log\esp{e^{-\lambda X^2_1}}
\end{align*}\end{linenomath}(which govern the infinitesimal behavior of $X^1$ and $X^2$) 
are called the branching mechanisms of the two-type branching process $Z$ 
and determine the process uniquely. 
In the setting of Theorem \ref{RayKnightTheorem}, $\Psi^2=\Psi^{\#}$ 
while $X^1$ has drift coefficient $\paren{0,2\beta}$ and its L\'evy measure equal to the sum $\beta b\delta_{(1,0)}+\pi^{f}+\pi^{i}$, where $\pi^f$ is responsible the common finite-activity jumps of $X^{1,1}$ and $X^{1,2}$, while $\pi^i$ is responsible for the infinite activity jumps of $X^{1,2}$. 
We have the explicit expressions
\begin{linenomath}\begin{align*}
\imf{\pi^f}{dk,dx}
=\sum_{l=1}^\infty e^{-bx} b^l\frac{x^{l+1}}{(l+1)!}\, \imf{\delta_l}{dk}\imf{\pi}{dx}
\quad\text{and}\quad
\imf{\pi^i}{dx}= \frac{1-e^{-bx}}{b}\, \imf{\pi}{dx}. 
\end{align*}\end{linenomath}

The above two-dimensional branching process is exactly the one obtained by \cite{MR2455180} 
in their study of the prolific individuals in continuous-state branching (CB) processes 
with branching mechanism $\Psi$. 
The aforementioned work was aimed at extending the well known decomposition 
of a supercritical Galton-Watson process 
in terms of its individuals with infinite and finite lines of descent recalled in Subsection \ref{motivationSubsection}. 
The two-dimensional branching process is also implicit in the work \cite{MR2322700} 
where the authors construct supercritical L\'evy trees by means of increasing limits 
of discrete trees consistent under Bernoulli leaf percolation. 
We have therefore obtained chronological and genealogical 
interpretations of the prolific individuals and an independent construction of supercritical L\'evy trees. 
Superprocess versions of the prolific skeleton decomposition can be found in \cite{MR2794978}, \cite{MR3330813} and the references therein. 

In order to make the link between supercritical CB processes 
and our construction of supecritical L\'evy trees more explicit, 
we will obtain a version of Theorem \ref{RayKnightTheorem} 
in which we obtain a $\imf{\cb}{\Psi}$ process starts at $x$. 
For this, let $x>0$ and, considering the interval $[0,x]$ as a compact TOM tree (to be rooted at $0$). 
Now define a probability measure $\eta_x^{\Psi}$ 
on locally compact TOM trees 
by grafting to the right of $[0,x]$ trees $c_n$ at height $x_n$ 
where $\paren{x_n,c_n}$ are the atoms of a Poisson random measure on $[0,x]$ 
with intensity $\leb\times \eta^{\Psi}$. 
As before, we will first define the height process of the truncated contour $H^r$ under $\eta_x^\Psi$, 
show that these continuous processes code a collection of growing TOM trees, 
hence showing the existence of a limiting TOM tree $\Gamma^x$. 
The statement features a continuous-branching process with branching mechanism $\Psi$, \imf{\cb}{\Psi},  
started at $x$. 
As in the above discussion of the two-dimensional case, 
this process can be represented as the unique solution to
\begin{linenomath}\begin{esn}
Z_t=x+X_{\int_0^t Z_s\, ds}
\end{esn}\end{linenomath}where $X$ is a spectrally positive L\'evy process with Laplace exponent $\Psi$. 
For background on these representations of continuous branching processes, 
the reader is referred to \cite{MR0208685}, \cite{1978Helland} and  \cite{MR2592395} for the monotype case without immigration, \cite{MR3098685} for the monotype case with immigration and \cite{MR3449255} and \cite{MR3689968} for the multitype cases. 

\begin{corollary}[Ray-Knight theorem for supercritical L\'evy trees]
\label{RayKnightCorollary}
Let $\Psi$ be a supercritical Laplace exponent which satisfies Hypothesis \defin{G}. 
Under $\eta_x^\Psi$, 
the measure $\mu\circ \delta^{-1}$ 
%the pushfoward of the measure $\mu$ by $\delta$ 
admits a \cadlag\ density $Z$. The process $Z$ is a $\imf{\cb}{\Psi}$ which starts at $x$.
%
%the random measure $\mu\circ 
%\begin{linenomath}\begin{esn}
%A\mapsto \imf{\mu}{\set{\sigma\in \Gamma^x: \imf{d}{\sigma,\rho}\in A} }
%\end{esn}\end{linenomath}
%admits a \cadlag\ density $Z$. The process $Z$ is a $\imf{\cb}{\Psi}$ which starts at $x$.
\end{corollary}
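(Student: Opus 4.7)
The plan is to first construct the height process $H^r$ and the genealogical tree $\Gamma^x$ under $\eta_x^\Psi$, adapting the construction used for Theorem \ref{RayKnightTheorem}. The truncated contour $X^r$ of the tree under $\eta_x^\Psi$ is the concatenation of the contour of the ancestor $[0,x]$ (truncated at level $r$) and the finitely many contours of Poisson graftings that reach level $r$, each time-changed to remain below $r$. The existence of the continuous process $H^r$ coding the pruned tree $\Gamma^r$, its consistency under truncation, and the existence of the limit tree $\Gamma^x$, all follow by the same arguments that yield $(H^r,\Gamma^r,\Gamma)$ under $\gamma^{\text{lc}}$; similarly, the measure $\mu\circ\delta^{-1}$ admits a \cadlag{} density $Z$ with $Z_0=x$ (matching the atom of size $x$ contributed by the ancestor at generation $0$).

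The key step is a branching decomposition based on Theorem \ref{SupercriticalTreeFromGraftingTheorem}, which gives $\eta^\Psi=\eta^{\Psi^\#}+b\,\Upsilon_{\text{tree}}$. Under $\eta_x^\Psi$, the Poisson graftings with intensity $\leb\times\eta^\Psi$ split into two independent Poisson families: (a) compact graftings with intensity $\leb\times\eta^{\Psi^\#}$, forming altogether a tree with law $\eta_x^{\Psi^\#}$; and (b) non-compact graftings, iid copies of $\Upsilon_{\text{tree}}$ attached at the points of an independent Poisson process of rate $b$ on $[0,x]$, so of total number $N\sim\mathrm{Poisson}(bx)$. Denoting by $\tilde Z$ the density contribution from (a) and by $Y^{(i),2}$ the density contribution of the $i$-th non-compact grafting, the branching property of the height-process construction yields $Z=\tilde Z+\sum_{i=1}^{N}Y^{(i),2}$.

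The subcritical Ray-Knight theorem for L\'evy trees (as in \cite{MR1954248}) applied under $\eta_x^{\Psi^\#}$ gives that $\tilde Z$ is a $\cb(\Psi^\#)$ started at $x$. By Theorem \ref{RayKnightTheorem} applied under each $\Upsilon_{\text{tree}}$, the pair $(Y^{(i),1},Y^{(i),2})$ is an iid two-type branching process started at $(1,0)$ with the branching mechanisms stated there. Summing over the Poisson graftings, the pair $(Z^1,Z)$ with $Z^1=\sum_{i=1}^{N}Y^{(i),1}$ is a two-type branching process on $\na\times[0,\infty)$ started at $(N,x)$ whose infinitesimal generator, computed from the Poisson structure and the two-type mechanism of Theorem \ref{RayKnightTheorem}, matches precisely the one derived by \cite{MR2455180} for the prolific skeleton decomposition of a $\cb(\Psi)$ process started at $x$. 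Projecting onto the second coordinate, and using uniqueness of multitype CB laws characterized by their branching mechanism and initial condition (e.g. via the time-change representations of \cite{MR3689968}), $Z$ must then be a $\cb(\Psi)$ started at $x$.

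The main obstacle I anticipate is the final identification step: verifying that combining the independent pieces $\tilde Z$ (a $\cb(\Psi^\#)$) with the $N$ iid $Y^{(i),2}$-processes (driven by a $\mathrm{Poisson}(bx)$ skeleton evolving as $Z^1$) actually reproduces the $\cb(\Psi)$ mechanism. This is essentially the content of \cite{MR2455180}, but it must be carried out here through an explicit Laplace-functional computation that reconciles $\Psi^\#(\lambda)=\Psi(\lambda+b)$ with the explicit forms of $\pi^f$ and $\pi^i$ stated after Theorem \ref{RayKnightTheorem}; the algebraic miracle of these cancellations is what makes the corollary work, and I would isolate this identity as a separate lemma before stitching everything together.
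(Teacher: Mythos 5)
Your proposal is correct and follows essentially the route the paper intends: the paper does not write out a separate argument for Corollary \ref{RayKnightCorollary}, but it is assembled from exactly the ingredients you use — the decomposition $\eta^{\Psi}=\eta^{\Psi^{\#}}+b\Upsilon_{\text{tree}}$ of Theorem \ref{SupercriticalTreeFromGraftingTheorem} applied to the Poisson grafting intensity, the subcritical Ray--Knight theorem under $\q^{\#}_x$ for the compact part, Theorem \ref{RayKnightTheorem} for each of the $\mathrm{Poisson}(bx)$ non-compact graftings, and the identification of the resulting two-type process with that of \cite{MR2455180}. The ``algebraic miracle'' you flag as the main obstacle is in fact immediate from the semigroup formula already displayed in the proof of Theorem \ref{RayKnightTheorem}: taking $(n,z)=(1,0)$ and $s=1$ there gives $\se_{(1,0)}\bigl(e^{-\lambda Z^2_t}\bigr)=\tfrac{1}{b}\bigl[\imf{u_t}{\lambda+b}-\imf{u_t}{\lambda}\bigr]$, and since $\imf{u^{\#}_t}{\lambda}=\imf{u_t}{\lambda+b}-b$, the exponent $x\imf{u^{\#}_t}{\lambda}+bx\bigl(1-\se_{(1,0)}(e^{-\lambda Z^2_t})\bigr)$ collapses to $x\imf{u_t}{\lambda}$, which is the $\imf{\cb}{\Psi}$ marginal from $x$.
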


\subsection{Organization}
\label{organizationSubsection}
%\tableofcontents

Section \ref{prolificSection} is devoted to the study of infinite lines of descent in the deterministic setting and to the proof of Proposition \ref{backboneProposition}. 
Then, the results are taken to the random setting of splitting trees in Section \ref{splittingBackboneSection} which features a proof of Theorem \ref{SupercriticalTreeFromGraftingTheorem} and Corollary \ref{YuleTreeCorollary}. 
Section \ref{heightSection} constructs the genealogical tree associated to supercritical splitting trees. 
Finally, Section \ref{rkSection} contains a proof of the Ray-Knight type theorem stated as Theorem \ref{RayKnightTheorem}. 

\section{The prolific skeleton on a locally compact TOM tree}
\label{prolificSection}

Let $\tr{c}=\paren{\paren{\tau,d,\rho}, \leq, \mu}$ be a locally compact TOM tree. 

We will now give a genealogical structure to the infinite lines of descent. 
Let\begin{lesn}
\mc{I}=\set{\sigma\in\tau: \sigma\text{ has an infinite line of descent}}.
\end{lesn}%
\begin{lemma}
\label{nonCompactnessAndInfiniteLinesLemma}
$\mc{I}$ is empty if and only if $\tau$ is compact. 
Otherwise, $\mc{I}$ is a non-compact connected subset of $\tau$ containing the root which inherits the structure of a locally compact TOM tree when equipped with Lebesgue measure. 
% There is a little detail since the measure of $\tau$ restricted to $\mc{I}$ might be zero. 
% There are several ways out: 
%% Using left sets
%% Using Lebesgue measure
\end{lemma}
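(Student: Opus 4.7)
The plan is to first establish the dichotomy using an Arzel\`a--Ascoli compactness argument to extract a limit ray, then deduce the structural properties. The trivial direction ($\tau$ compact implies $\mc{I}=\emptyset$) follows because any infinite line of descent is an isometry $\phi:[0,\infty)\to\tau$ with unbounded image, incompatible with compactness. For the converse, suppose $\tau$ is non-compact. Since every locally compact TOM tree is exhausted by its compact truncations $\tau^r=\set{\sigma:\imf{d}{\rho,\sigma}\leq r}$ as $r\to\infty$, non-compactness forces $\sup_\sigma \imf{d}{\rho,\sigma}=\infty$. I would pick $\sigma_n\in\tau$ with $\imf{d}{\rho,\sigma_n}\to\infty$ and parametrize the geodesics $[\rho,\sigma_n]$ isometrically as $\phi_n:[0,\imf{d}{\rho,\sigma_n}]\to\tau$ starting at $\rho$. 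Restricted to $[0,r]$, these are equicontinuous $1$-Lipschitz maps into the compact set $\tau^r$, so Arzel\`a--Ascoli followed by diagonalization over $r\in\na$ produces an isometry $\phi:[0,\infty)\to\tau$ with $\imf{\phi}{0}=\rho$ and $\imf{d}{\rho,\imf{\phi}{t}}=t$, whence $\rho\in\mc{I}$.

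The same extraction delivers the characterization $\mc{I}=\bigcap_{r>0} A_r$, where $A_r=\set{\sigma:\exists\, \sigma'\text{ with }\sigma\preceq\sigma'\text{ and }\imf{d}{\rho,\sigma'}\geq r}$. Each $A_r$ is closed because the ancestry relation is closed under metric limits (passing to the limit in the characterization $\sigma\preceq\sigma'\Leftrightarrow\imf{d}{\rho,\sigma'}=\imf{d}{\rho,\sigma}+\imf{d}{\sigma,\sigma'}$) and because compactness of $\tau^r$ lets one extract a convergent subsequence of the witness-descendants projected down to level $r$. Therefore $\mc{I}$ is closed in $\tau$. Connectedness, in fact star-shapedness with respect to $\rho$, is immediate: if $\sigma\in\mc{I}$ lies on a ray $\phi$, then concatenating $[\rho,\sigma]$ with $\phi$ produces a ray through any ancestor $\sigma'\preceq\sigma$, so $[\rho,\sigma]\subset\mc{I}$.

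Finally, $(\mc{I},d)$ is a closed, connected subset of $\tau$ stable under geodesics between its points, hence a real tree, with completeness, uniqueness of geodesics, and lack of loops inherited from $\tau$. The total order $\leq$ restricts to $\mc{I}$ and the pointwise conditions Or1 and Or2 transfer. Local compactness passes to the closed subset $\mc{I}$. I would equip $\mc{I}$ with the length measure (one-dimensional Hausdorff measure), which is diffuse; its local finiteness (property Mes1) follows because local compactness of $\tau$ forbids accumulation of infinite rays of descent in any compact ball, so $\mc{I}\cap\tau^r$ consists of finitely many geodesic arcs and has finite total length. Non-compactness of $\mc{I}$ is clear since it contains an infinite ray. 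The main obstacle is the twin Arzel\`a--Ascoli extractions---once to locate a ray through $\rho$ in the non-compact case and once to prove $\mc{I}=\bigcap_r A_r$---coupled with the verification that local compactness of $\tau$ translates into the local finiteness of the length measure on $\mc{I}$ required by Mes1.
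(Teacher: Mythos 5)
Your proof is correct, but the key existence step is carried out by a genuinely different device than the paper's. Where you parametrize the geodesics $[\rho,\sigma_n]$ and run Arzel\`a--Ascoli plus diagonalization to extract a limiting ray, the paper instead introduces the sets $S^n_r$ of points on the sphere $S_r$ having a descendant on $S_n$, observes that local compactness makes each $S^n_r$ finite and non-empty, and takes $S^\infty_r=\bigcap_n S^n_r$; it then uses the total order to select the \emph{first} element $\sigma_r$ of $S^\infty_r$ and verifies via \textbf{Or2} that $r\mapsto\sigma_r$ is consistent, hence an infinite line of descent. Both routes lean on Hopf--Rinow for compactness of balls, but the paper's combinatorial extraction buys something your argument does not: a \emph{canonical} choice of ray (the ``first infinite line of descent''), which is exactly what the subsequent proof of Proposition \ref{backboneProposition} reuses to build the labelled decomposition $(\mc{I}_u)_{u\in\tau_I}$. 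Your approach, in exchange, is more self-contained on two points the paper leaves implicit: the identity $\mc{I}=\bigcap_r A_r$ with each $A_r$ closed (hence closedness, and so completeness, of $\mc{I}$), and the explicit verification that $\mc{I}\cap\tau^r$ is a finite union of geodesic arcs, which is what makes the length measure locally finite as required by \textbf{Mes1}. The remaining structural points (star-shapedness with respect to $\rho$, restriction of the order, choice of length measure) coincide with the paper's treatment.
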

\begin{proof}
Obviously $\mc{I}$ is empty when $\tau$ is compact. 
When $\tau$ is not compact, then the sphere $S_r=\set{\sigma\in\tau: \imf{d}{\sigma,\rho}=r}$ is non-empty for any $r\geq 0$. 
Let us define\begin{linenomath}\begin{esn}
S^n_r=\set{\sigma\in S_r: \exists \sigma^n\in S_n, \sigma\in[\rho,\sigma_n]}.
\end{esn}\end{linenomath}%
Local compactness implies that $S^n_r$ is finite; it is non-empty since otherwise $S_{r+n} $ would be empty, implying that $\tau$ is compact. 
Note that\begin{linenomath}\begin{esn}
S^{n+1}_r\subset S^n_r\subset S_r.
\end{esn}\end{linenomath}%
Since $S_r$ is compact, by the Hopf-Rinow theorem, then  $S^\infty_r=\bigcap_n S^n_r$ is non-empty and finite. 
Let $\sigma_r$ be the first element of $S^\infty_r$. 
We now prove, by contradiction, that $\sigma_r\preceq\sigma_{r'}$ if $r\leq r'$. 
Indeed, if $\sigma_r\not\preceq\sigma_{r'}$, we can construct, 
by definition of $S_r^\infty$, an element $\tilde\sigma_{r'}\in S_{r'}^{\infty}$ such that $\sigma_r\preceq \tilde \sigma_{r'}$. 
By definition, $\sigma_{r'}<\tilde\sigma_{r'}$. 
However, if we now define $\tilde \sigma_{r}$ as the unique element in $[\rho, \sigma_{r'}]$ at distance $r$ from the root, then (as $\sigma_r\wedge\tilde\sigma_r\neq \sigma_r$), 
$\tilde \sigma_r<\tilde \sigma_{r'}\leq \sigma_{r}$, by \defin{Or2}, which contradicts the definition of $\sigma_r$. 
Hence, $r\mapsto \sigma_r$ is an isometry from $[0,\infty)$ to $\tau$ and by construction $\imf{d}{\rho,\sigma_r}=r$, which increases with $r$, so that $\mc{I}$ is non-empty. 

To see that $\mc{I}$ is connected, it suffices to note that any isometry from $[0,\infty)$ into $\tau$ can be extended to an isometry which contains the root. 
Hence, $\mc{I}$ can be considered a (locally compact) real tree, which can be given a total order by restricting the total order on $\tau$. 
We will give it Lebesgue measure for coding purposes, 
since the measure $\mu$ on our tree $\tau$ might assign zero mass to $\mc{I}$. 
\end{proof}

We will now see that $\mc{I}$ has the structure of a plane tree whose individuals live indefinitely and have associated to them a sequence of birth times. 
%\begin{proposition}
%\label{markedPlaneTreeDecompositionOfInfiniteLinesOfDescentProposition}
%Let $\mc{I}$ be the collection of individuals with infinite lines of descent. 
%If $\tau$ is non-compact then  there exists a plane tree $\tau_I\subset\mc{U}$ and a collection of infinite lines of descent $\paren{I_u:u\in\tau_I}$ of $\tau$, with images $\mc{I}_u,u\in\tau_p$,  which partition $\tau$ as follows: 
%\begin{enumerate}
%\item  $\bigcup_{u\in\tau_I} \mc{I}_u=\mc{I}$ and
%\item on defining  $\sigma_u=\imf{I_u}{0}$, we have $\mc{I}_u\cap \mc{I}_{\imf{\pi}{u}}=\set{\sigma_u}$ and $\mc{I}_u\cap \mc{I}_v=\emptyset$ if $u\neq \imf{\pi}{v}$ or $v\neq \imf{\pi}{u}$. 
%\end{enumerate}%
%Furthermore, if $\alpha_u=\imf{d}{\rho,\sigma_u}$, then $\mc{I}$ can be uniquely reconstructed from the marked plane tree $\paren{\tau_I,\alpha}$. 
%\end{proposition}
\begin{proof}[Proof of Proposition \ref{backboneProposition}]
% Since any infinite line of descent can be extended to start at the root, then $\rho\in\mc{I}$. 
Note that, for any $r>0$, there are only a finite number of elements of $\mc{I}$ at distance $r$ from $\rho$. 
(Otherwise, there would be an accumulation of long branches, contradicting local compactness). 
This quantity is positive if $\tau$ is non-compact and zero otherwise. 
We denote by $\mc{I}_r=\mc{I}\cap S_r$. 
% Maybe we should add an explanation.  
Let $\sigma_{\emptyset}^r$ be the first element in $\mc{I}$ at distance $r$ from $\rho$; in the proof of Lemma \ref{nonCompactnessAndInfiniteLinesLemma}, we have seen that $r\mapsto \sigma_{\emptyset}^r$ is an infinite line of descent if $\tau$ is non-compact. 
If $\sigma$ belongs to any infinite line of descent and $r=\imf{d}{\sigma,\rho}$, then either $\sigma=\sigma_\emptyset^r$ or $[\sigma, \sigma\wedge \sigma_\emptyset^r)>\sigma^r_\emptyset$. 
So, $\mc{I}_\emptyset=\set{\sigma_\emptyset^r: r\geq 0}$ can be thought of as the first infinite line of descent. 
% Define birthpoints of new infinite lines of descent. 
If $\mc{I}=\mc{I}_\emptyset$, we will call our tree a sin tree (the nomenclature for single infinite end tree as coined in \cite{MR1102319}) and set $\mc{\tau}_I=\set{\emptyset}$.
Otherwise, consider the connected components of $\tau\setminus\mc{I_\emptyset}$ which intersect $\mc{I}$. 

If $\tilde\tau$ is such a connected component and $\tilde\sigma\in \tilde\tau$, 
let $A=\set{t\geq 0: \imf{\phi_{\rho,\tilde\sigma}}{t}\in \mc{I}_\infty}$. The set $A$ is non-empty since $0\in A$. 
If $t=\sup A$, then $t\in A$ since $I_\emptyset$ is closed. 
Let $\tilde \rho=\imf{\phi_{\rho,\tilde\sigma}}{t}$. 
We now assert that $\tilde \rho$ is independent of the element $\tilde\sigma\in\tilde\tau$ that we considered. 
Indeed, if $\tilde\sigma_1<\tilde\sigma_2\in\tilde\tau$ gave rise to $\rho_1\neq \rho_2$, then $\rho_1<\rho_2$ by \defin{Or2} and this would create a cycle since we would be able to go from $\tilde\sigma_1$ to $\tilde\sigma_2$ inside of $\tilde \tau$ (by connectedness of components) or going from $\tilde\sigma_1$ to $\tilde\rho_1$, going up from $\tilde\rho_1$ to $\tilde\rho_2$ inside $\mc{I}_\emptyset$, and then from $\tilde\rho_2$ to $\tilde \sigma_2$.  
%
%If $\tilde \tau$ is such a connected component, 
%let $\tilde\sigma\in\tilde\tau$ and consider the set $A=\set{t\geq 0: \imf{\phi_{\rho,\tilde\sigma}}{t}\not \in \tilde\tau}$. 
%If $t\in A$ and $s\leq t$ then $s\in A$. 
%Indeed, otherwise $s\in \tilde \tau$ and then we have two different paths from $\imf{\phi_{\rho,\tilde\sigma}}{s}$ to $\tilde\sigma$. 
%The first one contained in $\tilde\tau$ and the second one passes through $\imf{\phi_{\rho,\tilde\sigma}}{t}$. 
%(They are different since $\imf{\phi_{\rho,\tilde\sigma}}{t}$ does not belong to $\tilde \tau$). 
%Then $A$ is of the form $[0,s]$ or $[0,s)$. 
%In fact, it is of the second form since $\imf{\phi_{\rho,\tilde\sigma}}{s}\in \mc{I}_{\emptyset}$ since the latter set is closed. 
Any path from $\tau\setminus\tilde\tau$ into $\tilde\tau$ must therefore pass through $\tilde \rho$ 
(otherwise there would be cycles). 
% Indeed, we could go from $\tilde\tau_1$ to $\tilde\tau_2$ using $[\tilde\rho_1,\tilde\rho_2]$ or with the other assumed path. 
Then $\tilde\tau\cup\set{\tilde \rho}$ is a TOM tree; to prove it we just need to see that $\tilde\tau\cup\set{\tilde \rho}$ is closed. 
Let $\sigma_n$ be a sequence of $\tilde\tau$ converging to $\sigma\in\tau$. 
If $\sigma$ did not belong to  $\tilde\tau\cup\set{\tilde\rho}$, then the path $[\sigma_n,\sigma]$  
%would intersect $\mc{I}_\emptyset$ and 
would hence have to contain $\tilde \rho$. This would  imply the inequality $\imf{d}{\sigma_n,\sigma}\geq \imf{d}{\tilde\rho,\sigma}>0$, which is incompatible with $\sigma_n\to\sigma$. Hence, components $\tilde \tau$ of $\tau\setminus\mc{I}_\emptyset$, when rooted at their corresponding $\tilde\rho$ and restricting order and measure to them, become TOM trees. We will call these the rooted components. 

We now proceed to order the rooted components. 
%If $\tau_1$ and $\tau_2$ are two components of $\tau\setminus\mc{I}_\emptyset$ and they are rooted at $\rho_1<\rho_2$, then $\sigma_i\in\tau_i$ implies $\sigma_1<\sigma_2$, as can be seen by applying \defin{Or2}. 
%This argument is enough to order components when $\tau$ is binary. 
%Let us proceed to order components with the same root. 
If $\tau_1$ and $\tau_2$ are two rooted components of $\tau\setminus\mc{I}_\emptyset$ (say rooted at $\rho_1$ and $\rho_2$), 
consider $\sigma_i,\tilde\sigma_i\in\tau_i$. 
Then $\sigma_1\leq \sigma_2$ implies $\tilde \sigma_1\leq \tilde\sigma_2$. 
Indeed, note that $\sigma_1\wedge\tilde\sigma_1<\sigma_2\leq \sigma_2\wedge \tilde \sigma_2$ since $\sigma_1\wedge \tilde\sigma_1\in\tau_1$ and so $\sigma_1\wedge \tilde\sigma_1\neq \sigma_1\wedge \sigma_2$ so that \defin{Or2} implies $\sigma_1\wedge \sigma_2< \sigma_2\leq \sigma_2\wedge \tilde\sigma_2$. 
But then the inequality $\tilde\sigma_1> \tilde\sigma_2$ would imply the contradictory inequality $\sigma_1\wedge \sigma_2< \sigma_2\leq \sigma_2\wedge \tilde\sigma_2$. 
We conclude that $\tilde\sigma_1<\tilde\sigma_2$. 

Hence, we can order the rooted components of $\tau\setminus\mc{I}_\emptyset$ which intersect $\mc{I}$, say as $\tau_1,\tau_2,\ldots$. 
We label them by increasing height of their root and in case of components $\tau_i,\tau_j$ with the same root, we impose that $i\leq j$ implies $\tau_i<\tau_j$, since there is only a finite number of components of $\tau\setminus\mc{I}_\emptyset$ intersecting $\mc{I}$ and sharing the same root by local compactness. 
(The ordering between two components is clear if their roots are different, which is the case when $\tau$ is binary). 
Let $k_\emptyset$ be the quantity of such connected components ($k_\emptyset$ can be zero or infinite). 
Then the first generation of $\tau_1$ consists of  $1,\ldots, k_\emptyset$ if $k_\emptyset$ is finite and of $\z_+$ otherwise. 
If $\tau_i$ is rooted at $\rho_i$, we set $\alpha_i=\imf{d}{\rho,\rho_i}$. 
Note that if $k_\emptyset=\infty$ then $\alpha_i$ is increasing and converges to $\infty$.  
Indeed, by local compactness, only a finite number of the $\alpha_i$ can belong to a compact interval of $\re_+$. 

Hence, starting from any non-compact TOM tree $\tau$, we have built its first infinite line of descent $\mc{I}_\emptyset$ and provided a particular labeling for the components of $\tau\setminus \mc{I}_\emptyset$ which intersect $\mc{I}$. % Change $\mc{I}_\emptyset$ to $\mc{I}$ above. 

We now proceed recursively. 
Starting from $\tau_\emptyset=\tau$, 
we consider its first infinite line of descent, with image $\mc{I}_\emptyset$ as well as the labeled components $\tau_1,\tau_2,\ldots$. 
Then, on each one of the components, we repeat the procedure. 
The image of the first infinite line of $\tau_u$ is denoted $\mc{I}_u$. 
The root of $\tau_u$ is called $\rho_u$ and we let $\alpha_u=\imf{d}{\rho,\rho_u}$. 
We let $k_u$ be the quantity of connected components of $\tau_u\setminus\mc{I}_u$ which intersect $\mc{I}$. 
If $k_u=0$, we have finished exploring this part of the tree. 
If $k_u>0$, the rooted connected components of $\tau_u\setminus\mc{I}_u$, labeled in our particular way, will be denoted $\tau_{ui},1\leq i\leq k_u$, and we now explore these. 
Notice that, by construction, $\mc{I}_u\cap \mc{I}_{ui}=\set{\rho_{ui}}$. 
The tree $\tau_I$ consists of the labels used for the lines of descent. 

Let us now show that $\mc{I}=\bigcup_{u\in\tau_I}\mc{I}_u$. 
%We only need to prove the inclusion $\mc{I}\subset \bigcup_{u\in\tau_I}\mc{I}_u$,
This follows from the more general equality\begin{linenomath}\begin{equation}
\label{infiniteLinesOfDescentOnSphereEquality}
S_r\cap \mc{I}=\bigcup_{u\in\tau_I, \alpha_u\leq r}S_r\cap \mc{I}_u,
\end{equation}\end{linenomath}which will be proven by induction on the (finite) quantity of elements of $S_r\cap \mc{I}$. 
When $S_r\cap \mc{I}$ has only one element, this is, by construction, the individual of $\mc{I}_\emptyset$ at height $r$, so that $S_r\cap\mc{I}= S_r\cap\mc{I}_\emptyset$. 
Suppose that the equality \eqref{infiniteLinesOfDescentOnSphereEquality} holds for any TOM tree and any $r\geq 0$ whenever $S_r\cap \mc{I}$ has less than $n$ elements. 
If for our tree $\tau$, $S_r\cap \mc{I}$ has $n+1$ elements, then one (and only one) of these elements belongs to $\mc{I}_\emptyset$. 
The others belong to rooted connected components of $\tau \setminus \mc{I}_\emptyset$, say with labels $1,\ldots, k$ such that $\alpha_{i}\leq r$. 
Denote these components by $\tau_{1},\ldots, \tau_k$. 
By construction, the infinite lines of descent of $\tau_i$ are $\paren{\mc{I}_{iu}: iu\in\tau_I}$. 
If $\mc{I}^i$ denotes individuals with infinite lines of descent of $\tau_i$ and $S^i_r$ denotes individuals in $\tau_i$ at distance $r-\alpha_{u_i}$ from $\rho_{u_i}$, note that $\mc{I}^i\cap S^i_r$ has at most $n$ elements, so that from our induction hypothesis we get
\begin{linenomath}
\begin{esn}
\mc{I}^i\cap S^i_r
=\bigcup_{\substack{iu\in\tau_I\\ \alpha_{iu}\leq r}} \mc{I}_{iu}\cap S_r.
\end{esn}\end{linenomath}Hence,
\begin{linenomath}\begin{esn}
\mc{I}\cap\ S_r
= \mc{I}_\emptyset\cap S_r\cup \bigcup_{\substack{i\leq k\\ iu\in\tau_v\\ \alpha_{iu }\leq r}} \mc{I}_{iu }\cap S_r
=\bigcup_{\substack{u\in\tau_I\\ \alpha_u\leq r}}S_r\cap \mc{I}_u. \qedhere
\end{esn}\end{linenomath}
\end{proof}

\section{Backbone decomposition of supercritical splitting trees}
\label{splittingBackboneSection}
In this section, we analyze the laws $\Upsilon$ and $\Upsilon_{\text{tree}}$ with the aim of proving Theorem \ref{SupercriticalTreeFromGraftingTheorem}. 
We first prove that under $\Upsilon$, there exists a unique infinite line of descent that contains the root. 
Then, we consider the measure $\Upsilon_{\text{tree}}$ and prove that the infinite lines of descent are a Yule tree and move on to the proof of Theorem \ref{SupercriticalTreeFromGraftingTheorem}. 
\subsection{Infinite lines of descent under $\Upsilon$}
Recall that the probability measure $\Upsilon$  is the limit of trees with laws $\Upsilon^r$ 
coded by the concatenation of the post-minimum process of a $\Psi$-L\'evy process 
(time-changed to remain below $r$) 
followed by an independent $\Psi^\#$-L\'evy process started at $r$ and time-changed to remain below $r$ until one of them reaches zero. 
%(Informally, 
%it is the tree coded by a $\Psi^\rightarrow$-L\'evy process started at zero followed by a $\Psi^\#$-L\'evy process starting at $\infty$. )
However, in order to access the infinite line of descent, 
we need to define the trees with laws $\Upsilon^r$ on a unique probability space so that the tree with law $\Upsilon$ becomes its pointwise direct limit. 
\begin{proposition}
\label{uniquenessInfiniteLineOfDescentProposition}
Let $S$ be a tree with law $\Upsilon$. 
Then $S$ admits a unique infinite line of descent. 
\end{proposition}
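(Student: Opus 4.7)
The plan is to realize $\Upsilon$ as the law of the direct limit of the $S^r$'s jointly built from one pair of driving L\'evy processes, isolate a spine using the junction between the two pieces of the construction of $S^r$, and then show that every branch grafted at a given level away from the spine is coded by a finite excursion and is therefore compact. Uniqueness of the infinite line is then forced by Proposition~\ref{backboneProposition}.

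\textbf{Joint construction and the spine.} I would fix a $\Psi$-L\'evy process $X$ under $\p_0$ with post-minimum process $X^\rightarrow$, together with an independent $\Psi^{\#}$-L\'evy process from $+\infty$, realised as a consistent family $(Y^{(r)})_{r>0}$ in which $Y^{(r')}$ shifted to its first hitting time of $r$ equals $Y^{(r)}$. The contour $f^r$ of $S^r$ is the concatenation of $X^\rightarrow$ time-changed to stay below $r$ with $Y^{(r)}$ time-changed to stay below $r$ and killed at $0$. Let $\sigma^*_r\in S^r$ be the junction point between the two pieces of $f^r$; since the right-hand piece starts at value $r$, this point lies at height $r$. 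A direct comparison of the constructions at levels $r<r'$ shows that the further time-change of $f^{r'}$ to remain below $r$ reproduces $f^r$, and in particular $\sigma^*_{r'}$ projects onto $\sigma^*_r$. Therefore $r\mapsto \sigma^*_r$ is an isometry $[0,\infty)\to \tau$ with $\imf{d}{\rho,\sigma^*_r}=r$, which gives an infinite line of descent of $S$ and proves existence.

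\textbf{Uniqueness.} The points of $S$ at height $r$ coincide with those of $S^r$ at height $r$. Each such point that is not $\sigma^*_r$ corresponds either to a pre-$L_r$ excursion of $X^\rightarrow$ above $r$, where $L_r:=\sup\set{t:X^\rightarrow_t\leq r}$, or to an excursion of $Y^{(r)}$ above $r$. The subtree of $S$ grafted at such a non-spine point is, in the limit, coded by the corresponding excursion: once $r'$ exceeds the supremum of that excursion, the construction of $f^{r'}$ reveals it in full and attaches nothing more on top. Since $X^\rightarrow$ drifts to $+\infty$ (the one infinite excursion above $r$ is precisely the one cut off and reappearing as $\sigma^*_r$) and since $\Psi^{\#}$ is (sub)critical and hence $Y^{(r)}$ reaches $0$ almost surely, all the excursions under consideration are of finite length almost surely. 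A tree coded by a finite-length \cadlag\ function is compact, so by Proposition~\ref{backboneProposition} it carries no infinite line of descent. Consequently $S_r\cap\mc{I}=\set{\sigma^*_r}$ for every $r\geq 0$, which forces $\mc{I}$ to equal the spine and $S$ to admit a unique infinite line of descent.

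\textbf{Main obstacle.} The delicate ingredients are the construction of the ``$\Psi^{\#}$-L\'evy process from $+\infty$'' $(Y^{(r)})_{r>0}$ and the identification of the subtree of $S$ grafted at a non-spine point of height $r$ as the tree coded by a single excursion of $X^\rightarrow$ or $Y^{(r)}$ above $r$. This requires careful bookkeeping at three successive levels $r<r'<r''$ of how the time-changes interact with the excursion structure, so that an excursion of height at most $r'$ is preserved intact as $r'$ grows. Spectral positivity (the absence of negative jumps) is what makes these excursions end continuously at level $r$ and project cleanly onto individual height-$r$ points of $S^r$; once this is made explicit, compactness of the grafted subtrees and uniqueness of the infinite line follow.
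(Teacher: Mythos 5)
Your proposal follows essentially the same route as the paper: realize all truncations $S^r$ jointly from a single post-minimum process and a single consistent realization of the $\Psi^{\#}$-process descending from $+\infty$ (the paper does this concretely by concatenating unit-step pieces $X^i$ from level $i$ down to $i-1$), identify the spine through the junction/future-minimum points, and rule out other infinite lines by observing that every off-spine subtree is coded by a finite excursion above the running or future minimum and is therefore compact. The argument is correct and matches the paper's proof in all essentials.
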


\begin{proof}
%This direct limit construction can be made out of a sequence of processes consistent under time-change as follows. 
Let $X^0,X^1,\ldots$ be independent processes, 
where $X^0$ has the law $\p_0^{\rightarrow}$ and for $i\geq 1$,  
the law of $X^i$  is the image under $\p_i^\#$ by killing upon reaching $i-1$. 
We then let $X^{i,n}$ equal $X^i$ time-changed to remain below $n$, 
with the understanding that if $i\geq n$ then this is the trivial trajectory 
which is ignored when referring to it for concatenation purposes. 
Finally, we just let $Y^n$ equal the concatenation of $X^{0,n}$, $X^{n,n}, \ldots, X^{1,n}$. 
Assume that the processes are concatenated at times $T_1^n<T_2^n<\cdots< T^n_n$ and that $Y^n$ is defined until $T^n_{n+1}$. 
Note that the sequence of processes $Y^n$ is (pointwise) consistent under time-change. 
We then let $S^n$ be the tree coded by $Y^n$ and let $S$ be the pointwise direct limit of the sequence $(S^n)$, consisting of equivalence classes consisting of elements the type $(n,\sigma)$ with $\sigma\in S^n$ (as explained in Subsection \ref{codingSequenceSubSubSection}). 
In what follows we identify  $\sigma\in S_n$  and the class of $(n,\sigma_n)$. 
%Let $\sigma\in S$, and let $\sigma^n$ be its representative (projection) to height $\leq n$. 
Note that the law of $S$ is $\Upsilon$. 
We first show that $S$ has at least one infinite line of descent. 
Indeed, consider first the path to the root from $\sigma^n_n=[T^n_1]_{Y^n}$ (considered as an element of $S$): 
this consists of individuals
\begin{linenomath}
\begin{esn}
[\rho, \sigma^n_n]
=\set{[t]_{Y^n}: t\geq T^n_1 \text{ and } Y^n_{t}=\underline Y^n_{[T^n_1,t]}}
\end{esn}\end{linenomath}as explained when introducing the tree coded by a function in \cite{splitting1}. After $T^n_1$, $Y^n$ reaches level $i<n$ at $T^n_i$. 
It follows that $\sigma^n_i=[T^n_i]_{Y^n}\in [\rho,\sigma^n_n]$. 
Note that $\sigma^n_{i}\preceq\sigma^n_{i+1}$. 
Hence, $\mc{I}=\bigcup_n [\rho,\sigma^n_n]$ is an infinite line of descent since $\imf{d}{\rho,\sigma_n^n}=Y^n_{T^n_1}=n\to\infty$ as $n\to\infty$. 

We now prove that  $\mc{I}$ is the unique infinite line of descent. 
Indeed, if $\sigma\in  S$, we consider $\tilde n$ such that $\imf{d}{\sigma,\rho}<\tilde n$ and hence that $\sigma\in S^{\tilde n}$. 
Also, let $ n\geq \tilde n$ be such that the maxima of $X^1,\ldots, X^{\tilde n}$ and of $X^0 $ (until the last time $\Lambda_{\tilde n}$ it visits $[0,\tilde n]$) are less than $n$. 
Suppose that $\sigma=[t]_{Y^n}$. 
We divide into cases depending on if $t<T^n_1$ or not. 
In the first case, note that $Y^{n+m}=Y^{n}=X^0$ on $[0, \Lambda_{\tilde n}]$ and $Y^{n+m}\geq \tilde n$ on $[\Lambda_{\tilde n}, T^{n+m}_1]$. 
If $X^0_t=\underline X^0_{[t,\infty)}$ then $Y^n_{t}=\underline Y^n_{[t, T^n_1]}$ and if we let $\tilde t=\inf\set{s\geq T^n_1: Y^n_s\leq Y^n_t}$ then $[t]_{Y^n}=[\tilde t]_{Y^n}\preceq [T^n_1]_{Y^n}\in   \mc{I}$. 
Otherwise, if $\imf{d}{\sigma,\rho}=X^0_t>\underline X^0_{[t,\infty)}$, then, by the choice of $n$, the subtree above $\sigma$ is compact (and coded by $X^0$ (or $Y^n$) from the first time $X^0$ exceeds $X^0_t$ until the last time it is above that quantity. ) 
Hence, $\sigma$ is not on an infinite line of descent (but attaches to its left). 
When $t\geq T^n_1$, we can analogously divide into the cases depending on if $Y^n_t=\underline Y^n_{[T^n_1,t]}$ or not. 
The proof follows the same line as the one just presented and we one sees that the excursions above the cumulative minimum of the $X^i$ code trees that attach to the right of the infinite line of descent. 
\end{proof}

\subsection{Yule trees and the prolific skeleton decomposition}
The objective of this section is to prove  Theorem \ref{SupercriticalTreeFromGraftingTheorem} and Corollary \ref{YuleTreeCorollary}.  
To this end, we will fix a level $r>0$ and consider the contour of the truncation at level $r$ of the restriction of $\eta^\Psi$ to locally compact trees, whose law was equal to $b\q^{\rightarrow,r}$,  as well as the corresponding contour of the truncation of the $\Upsilon$-tree. 
Theorem \ref{SupercriticalTreeFromGraftingTheorem} will follow once we prove that the above two contours have the same law.

Recall the measure on sin trees $\Upsilon$ defined before the statement of Theorem \ref{SupercriticalTreeFromGraftingTheorem}. 
%Using the preceding Poissonian construction of $X^\uparrow$ under $\p^\Psi$, we can 
Let us 
describe the law of the contour process of the image $\Upsilon^r$ of $\Upsilon$ upon truncating at level $r$. 
Recall that the contour process under $\Upsilon^r$ is the concatenation of  $X^{\rightarrow}$ time-changed to remove the part of the trajectory above $r$ 
and a L\'evy process with Laplace exponent $\Psi^{\#}$ started at $r$, reflected below level $r$  and killed upon reaching zero. 
%Hence, the law of the contour process under $\Upsilon ^r$ coincides with the measure $\nu^r$ used to define the measure $\eta^\Psi$ just before Theorem \ref{MeasuresWithSplittingPropertyLocallyCompactCaseTheorem}. 
Because of our description of the infinite line of descent under $\Upsilon$, 
we might think of this truncated sin tree as the (vertical) interval $[0,r]$ 
where we graft trees to the left and to the right; 
the left corresponding to the process $X^{\rightarrow}$ and the right to the subcritical L\'evy process (both time-changed to remain below $r$). 
Hence, to the right of the interval $[0,r]$, we just graft trees $f$ at $s$ where $\paren{s,f}$ is a Poisson random measure with intensity $\imf{\leb}{ds}\otimes \imf{\nu^{\#,r-s}}{df}$, where $\nu^{\#,r}$ is the excursion measure corresponding to the exponent $\Psi^{\#}$ and then time-changed to keep the process below $r$. 
Let us consider this description when passing to a $\Upsilon_\text{tree}$ truncated at level $r$, say $I^r$. 
By construction, $I^r$ can be thought of as the interval $[0,r]$, whose tip is denoted $\sigma_0$, where the left is coded by $X^{\rightarrow}$ (time-changed) and to the right we graft trees as under $\Upsilon$ and additionally graft truncated locally compact trees $(S_k,I_k)$ at the atoms of a Poisson random measure with intensity $b\, ds\otimes \imf{\Upsilon^{r-s}_{\text{tree}}}{df}$. We will suppose that $S_1>S_2>\cdots$, so that $I_1$ is the tree that is grafted to the right of $[0,r]$ farthest from the root and conditionally on $S_1=s$, $I_1$ has law $\Upsilon^{r-s}_{\text{tree}}$ and tip $\sigma_1$. 
If we graft no truncated locally compact trees, then the right of $I^r$ is coded by a L\'evy process with exponent $\Psi^\#$, started at $r$, killed upon reaching zero and time-changed to remain below $r$. 
Otherwise, %here is the following regenerative description: 
the right of $\sigma_0$ is coded by 3 different parts (in their correct chronological order): 
first, what happens between $\sigma_0$ and $\sigma_0\wedge \sigma_1$, 
then between $\sigma_0\wedge \sigma_1$ and $\sigma_1$, 
and finally the right of $\sigma_1$.
Conditionally on $S_1=s$, between $\sigma_0$ and $\sigma_0\wedge \sigma_1$, 
we have a L\'evy process with exponent $\Psi^{\#}$ time-changed to remain below $r$ and killed upon reaching $s$. 
Then, what lies between $\sigma_0\wedge \sigma_1$ and $\sigma_1$ is coded by a process with law $s+X^{\rightarrow}$ time-changed to remain below $r$. 
Since $r-S_1$ is exponential of parameter $b$ when $I_1$ needs to be grafted, then these two pieces, plus the value of $S_1$ can be combined to obtain a L\'evy process with exponent $\Psi$ conditioned to remain above zero (its minimum will be $r-S_1$) and time-changed to remain below $r$. 
Finally, the right of $\sigma_1$ in $I^r$ is divided into the right of $\sigma_1$ in $I_1$ and the right of $\sigma_0\wedge \sigma_1$ in $I^r$. 
However, this has the same law as $I$, 
meaning that we restart with the same procedure. 
Iterating, we see that the coding function for $I^r$ also admits the following description: we start with a process with law $\p^{\rightarrow}$ time-changed to remain below $r$ until its death-time, followed by processes with laws $\p_r$ time-changed to remain below $r$ which will get concatenated until one of them reaches zero. 
We deduce that the coding function for $I^r$ has the same law as the corresponding coding function under $\eta^r$, which concludes the proof of Theorem \ref{SupercriticalTreeFromGraftingTheorem}. 

Regarding Corollary \ref{YuleTreeCorollary}, 
we just note that under $\eta^\Psi$ the infinite lines of descent are non-empty only when the tree is locally compact. 
However, the restriction of $\eta^\Psi$ to locally compact trees is $b\Upsilon_{\text{tree}}$. 
The construction of the latter, 
plus the fact that under $\Upsilon$ there is a unique infinite line of descent thanks to Proposition \ref{uniquenessInfiniteLineOfDescentProposition}, show that the tree of infinite lines of descent under $\Upsilon_{\text{tree}}$ is a Yule tree of birth rate $b$.

\section{The height processes and the genealogical tree associated to supercritical splitting trees}
\label{heightSection}

In this section, we aim at constructing the genealogical tree associated to a supercritical splitting tree. 
This will be accomplished by considering the height processes, 
introduced in \cite{MR1617047} and \cite{MR1954248}, of truncations of splitting trees. 
This provides us with a family of continuous functions coding a growing sequence of compact trees. 
A direct limit construction shows us the existence of a locally compact TOM tree;
the limit tree will be termed the supercritical L\'evy tree since it reduces to the L\'evy tree in the subcritical case. 
Let us now turn to the construction of the height process. 

Recall that if $X$ is any stochastic process and $\paren{\eps_k}$ is any sequence decreasing to zero, 
one can define a measurable version of the height process of $X$, 
denoted $\imf{H^\circ}{X}$, by means of
\begin{linenomath}
\begin{equation*}
\label{DuquesneLeGallNormalization}
\imf{H^\circ }{X}_t=
\liminf_{k\to\infty}\frac{1}{\eps_k}\int_0^t \indi{X_s- \underline X_{[s,t]}\leq \eps_k}\, \d s.
\end{equation*}\end{linenomath}
%% INTERPRETATION
Then, one defines the height process as a good version of $H^\circ$. 
If $Y^r$ is the contour of a $\Upsilon_\text{tree}$ truncated at height $r$, and assuming that Grey's condition 
\begin{description}
\item[(G) ] $\int^\infty 1/\imf{\Psi}{q}\, \d q<\infty$
\end{description}holds, we now  construct a continuous extension  of  (the restriction of) $\imf{H^\circ}{Y^r}$ (to a random dense set). 

Recall that we assume  that $\Psi$ is supercritical 
and we let $b>0$ denote the positive root of $\Psi$. 

We will also need the Laplace exponent $\Psi^{\#}$ where $\imf{\Psi^{\#}}{q}=\imf{\Psi}{b+q}$. 
Notice that the L\'evy processes corresponding to $\Psi$ and $\Psi^{\#}$ have paths of unbounded variation (because of \defin{G}) and that hence $0$ is regular for both half-lines thanks to Corollary VII.5 of \cite{MR1406564}. 
As referenced in the introduction, in this case,  the infimum of $X$ on any interval $[s,t]$ is achieved continuously at a unique place. 

Fix $r>0$. 
Let $X^1, X^2,\ldots$ be independent processes. 
$X^1$ has law $\p^{\rightarrow}$, while $X^2,X^3,\ldots$ are $\Psi$-L\'evy processes started at $r$ and killed when they reach zero. 

By concatenation, we define the process $Y^r$ as follows. 
First, we define the time-change $C^i$ as the right-continuous inverse of\begin{linenomath}\begin{esn}
A^i_t=\int_0^t \indi{X^i_s\leq r}\, \d{s}. 
\end{esn}\end{linenomath}%
Since each $X^i$ either has finite lifetime or drifts to infinity, we see that $C^i_\infty<\infty$. 
We define $T_i=C^1_\infty+\cdots+C^i_\infty$ and $T_0=0$. 
Next, let $N$ be the first index $i$ such that $X^i\circ C^i$ approaches zero at death time. 
% Could be said as: $\imf{X^i\circ C^i}{\zeta-}=0$. 
We then define\begin{linenomath}\begin{esn}
Y^r_t=\sum_{i=1}^N \indi{t\in [T_{i-1},T_i)} X^i\circ C^i_{t-T_{i-1}}. 
\end{esn}\end{linenomath}

The process $Y^r$ codes a real tree which has been interpreted as the contour of the chronological tree of a population of individuals which have iid lifetimes and reproduce at constant rate to iid copies of themselves, seen until time $r$. 
The processes $Y^r$ are consistent under time change, so that if $r'\leq r$ then removing the trajectory on top of $r$ from $Y^{r'}$ (with a time-change analogous to the $C^i$) leaves a process with the same law as $Y^r$ (cf. Corollary 8 and Propositon 9 in \cite{splitting1}). 
Hence, we can actually build the processes $Y^r$ on the same probability space so that the time-change consistency is valid pathwise. 
Hence, the trees they code naturally form an increasing family and we can construct from them, by a direct limit construction, a unique locally compact TOM tree  whose truncation at level $r$ is coded by a process with the same law as $Y^r$ and which is not compact.

For (spectrally positive) L\'evy processes satisfying  Grey's condition and in the subcritical case (so under $\p^\#$, say), \cite{MR1954248} construct the so-called \defin{Height process} of $X$, denoted $H$, as a continuous modification of the process $\imf{H^\circ }{X}$, with additional links to the (suitably normalized Markovian) local time $L^{(t)}$ of the time-reversed processes $\hat X^t$ given by $\hat X^t_s=X_{(t-s)-}-X_t$. 
Indeed, according to Lemma 1.4.5 of \cite{MR1954248}, there exists a sequence $\eps_k\downarrow 0$ such that, almost surely, if $s$ is an \defin{upward time} for $X$, meaning that there exists a rational $t>s$ satisfying  $X_{s-}\leq \underline X_{[s,t]}$), we have:
%either $s$ is a jump time of $X$ or there exists a rational $t>s$ satisfying  $X_{s-}\leq \underline X_{[s,t]}$, we have: 
\begin{linenomath}
\begin{equation}
\label{localTimeLemmaFromDLG}
H_s=L^{(t)}_t-L^{(t)}_{t-s}=\imf{H^\circ}{X}_s. 
\end{equation}\end{linenomath}
%Times $s$ such that either $\Delta X_s>0$ or there exists a rational $t>s$ satisfying $X_{s-}\leq \underline X_{[s,t]}$ will be termed upward (excursion) times (for $X$). 
Note that the (random) set of upward times is dense on $(0,\zeta)$; for example, any jump time is an upward  time and jumps of $X$  are dense under $\p$, $\p^\#$ and $\p^\rightarrow$. 
They will be of fundamental importance in our analysis, since the equality $H_s=H^\circ_s$ is valid for all upward times $s$ under $\p_x$. 
%The aforementioned Lemma also proves that $H$ is a modification of $\imf{H^\circ }{X}$. 
We will have to consider an alternative to modifications for height proceses, 
since we were unable to make them work with time-changes. 
Instead, we will let $H^u$ (or $\imf{H^u}{X}$) denote the restriction of $H^\circ$ to the set of upward times. 
We will construct a continuous extension of $H^u(Y^r)$ and define it as the height process of $Y^r$

To construct a continuous extension of $\imf{H^u}{Y^r}$, we first construct a continuous extension of $\imf{H^u}{X\circ C^r}$ under $\p_{x}^{\#}$, then under $\p_x$ and $\p^{\rightarrow}$, then finally for $Y^r$. 
We simplifly notation in the next proposition by not writing $r$ as a superscript. 

\begin{proposition}
\label{timechangedHeightProcessForReflectedLevyProcessProposition}
Under $\p^{\#}$, 
$\imf{H}{X}\circ C$ is the unique continuous extension of $\imf{H^u}{X\circ C}$.
%process such that, almost surely,  $\imf{H}{X}\circ C_t=\imf{H^\circ}{X\circ C}_t$ for almost all $t$. 
Additionally, almost surely, if $t$ is upward for $X$ then $\imf{X\circ C}{t-}<r$ and $C_t$ is upward for $X$, so that we have the equality $\imf{H}{X}\circ C_t=\imf{H^\circ }{X\circ C}_t$. 
%if $C_t$ is upward for $X$ and $X\circ C_t<r$ then 
%%for any upward time $t$ of $X\circ C$ such that $X\circ C_t<r$ 
%%and there exists $u>t$ such that $X_-\circ C_{t}\leq \underline{X\circ C}_{[t,u]}$, we have
%$\imf{H}{X}\circ C_t=\imf{H^\circ }{X\circ C}_t$. 
\end{proposition}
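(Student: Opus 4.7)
\emph{Correspondence and agreement.} I plan to prove the ``additionally'' clause first — the correspondence of upward times and the pointwise agreement of $H$-values — and then combine it with continuity of $H(X)\circ C$ and density of upward times to obtain the continuous-extension statement. Let $t$ be upward for $X\circ C$ with rational witness $t'>t$. The bound $(X\circ C)(t-)<r$ follows by contradiction: since $X\circ C\leq r$ uniformly, the equality $(X\circ C)(t-)=r$ combined with the upward condition $(X\circ C)(t-)\leq \underline{X\circ C}_{[t,t']}$ would force $X\circ C\equiv r$ on $[t,t']$ and hence $X\equiv r$ on an interval of positive length in the original time scale — impossible under hypothesis (G) since $\Psi^\#$ has infinite variation. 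To see that $C_t$ is upward for $X$, I use the identity $\underline{X\circ C}_{[t,t']}=\underline X_{[C_t,C_{t'}]}$: because $X_{C_{t'}}\leq r$, the infimum of $X$ over $[C_t,C_{t'}]$ is necessarily attained at a point where $X\leq r$, so it is not excised by the time change. Combined with $X_{C_t-}=(X\circ C)(t-)$, this yields $X_{C_t-}\leq \underline X_{[C_t,C_{t'}]}$, and any rational $t''\in(C_t,C_{t'}]$ serves as a rational witness. For the value equality, the change of variables $s=A_u$ in the $\liminf$-integral representation of $H^\circ(X\circ C)_t$ gives
\begin{lesn}
H^\circ(X\circ C)_t=\liminf_{k\to\infty}\frac{1}{\eps_k}\int_0^{C_t}\indi{X_u\leq r}\indi{X_u-\underline X_{[u,C_t]}\leq\eps_k}\,\d u,
\end{lesn}
and the only discrepancy with the analogous representation of $H^\circ(X)_{C_t}$ is the integral over $\{u\in[0,C_t]:X_u>r\}$. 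On each excursion $[u_1,u_2]$ of $X$ above $r$ contained in $[0,C_t]$, the regularity of the infinite-variation process at level $r$ gives $\underline X_{[u_2,C_t]}\leq X_{C_t}<r$ (exploiting the generic strictness $X_{C_t}<r$), so the uniform lower bound $X_u-\underline X_{[u,C_t]}\geq r-X_{C_t}>0$ forces the indicator to vanish for all sufficiently small $\eps_k$. Applying \eqref{localTimeLemmaFromDLG} at the upward time $C_t$ for $X$ then yields $H(X)_{C_t}=H^\circ(X)_{C_t}=H^\circ(X\circ C)_t$.

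\emph{Continuity of $H(X)\circ C$, the main obstacle.} Since $H(X)$ is continuous and $C$ is right-continuous non-decreasing, $H(X)\circ C$ is automatically right-continuous; the remaining concern is left-continuity at the jumps of $C$, which correspond exactly to the maximal excursions $[u_1,u_2]$ of $X$ above $r$ (at such a jump, $C$ jumps from $u_1$ to $u_2$). The crux is therefore the identity $H(X)_{u_1}=H(X)_{u_2}$. My plan is to fix a rational $t''>u_2$ with $X_{t''}<r$ and use \eqref{localTimeLemmaFromDLG} to express the difference $H(X)_{u_2}-H(X)_{u_1}$ as the increment of the local time at zero of the reversed process $\hat X^{t''}_v=X_{(t''-v)-}-X_{t''}$ over $v\in[t''-u_2,t''-u_1]$. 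Because $X>r$ strictly on $(u_1,u_2)$ and $X_{u_2}=r$, on this interval $\hat X^{t''}_v$ stays strictly above the value $r-X_{t''}>0$, so it does not visit zero and accrues no local time there; hence $H(X)_{u_1}=H(X)_{u_2}$. Minor technical issues — approximating $u_1$ and $u_2$ by upward times in order to invoke \eqref{localTimeLemmaFromDLG}, and handling a possible jump of $X$ at $u_1$ — are dealt with by standard density arguments using the continuity of the modification $H(X)$.

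\emph{Conclusion.} The upward times of $X\circ C$ are dense on its lifetime, since every jump time of $X\circ C$ is upward and the jumps of $X$ occurring below level $r$ are dense there. Therefore $H(X)\circ C$, being continuous and agreeing with $H^\circ(X\circ C)$ at every upward time of $X\circ C$, is the unique continuous extension of $H^u(X\circ C)$, completing the proof.
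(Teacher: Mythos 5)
Your proposal follows the same overall route as the paper: the change-of-variables computation for the value identity at upward times, the reduction of continuity of $\imf{H}{X}\circ C$ to the statement that $\imf{H}{X}$ takes the same value at the two endpoints of each excursion of $X$ above $r$, and the density of upward times (jump times) to get uniqueness of the continuous extension. The place where you are substantially lighter than the paper is exactly where the paper does its real work. You propose to apply \eqref{localTimeLemmaFromDLG} at $u_1$ and $u_2$ themselves and relegate the approximation by upward times to ``standard density arguments''. But $u_2=d_u$ is a stopping time at which $X$ takes the value $r$ (the excursion is closed continuously since $X$ has no negative jumps), and by regularity of $(-\infty,0)$ for the infinite-variation process, $X$ drops below $r$ immediately after $d_u$; hence $d_u$ is almost surely \emph{not} an upward time and \eqref{localTimeLemmaFromDLG} is simply unavailable there. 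Nor can the approximation be done with arbitrary nearby upward times: one needs approximants chosen at a \emph{matched level} so that the reversed local time provably does not charge the interval between them. This is what the paper's construction accomplishes, taking $\rho_v$ to be the argmin of $X$ on $[d_u,v]$ and $\gamma_v=\sup\set{s\leq u: X_s\leq X_{\rho_v}}$, so that $(\gamma_v,\rho_v)$ contains no record times for the reversed process, whence $H_{\gamma_v}=H_{\rho_v}$ by \eqref{localTimeLemmaFromDLG} and the support property of local times, and continuity of $H$ concludes as $v\downarrow d_u$. You should carry out this step rather than defer it.

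A second, smaller point: $L^{(t'')}$ is not the local time of $\hat X^{t''}$ at zero but the Markovian local time at zero of the reversed process reflected at its running extremum. Your inequality $\hat X^{t''}_v\geq r-X_{t''}>0$ does imply that the reflected process stays away from zero on the relevant interval (its running extremum at that stage is on the other side of $0$), so your conclusion survives, but as written the justification misidentifies the process whose local time is being accumulated; the correct formulation is that no $u'\in(u_1,u_2)$ satisfies $X_{u'-}\leq \underline X_{[u',t'']}$, i.e.\ the interval contains no record times of the reversed process.
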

\begin{proof}
%% $X$ spends zero time at $r$. 
%% $\imf{H}{X}\circ C=\imf{H^o}{X\circ C}$ if $X\circ C_t<r$ and therefore $\imf{H}{X}\circ C$ is a modification of $\imf{H^o}{X\circ C}$. 
%% $\imf{H}{X}\circ C$ is continuous.  
%%% Modification
%Let us first see that $\imf{H}{X}\circ C$ is a modification of $\imf{H^\circ}{X\circ C}$. 
	
%	We now finish the proof that $\imf{H}{X}\circ C$ is a modification of $\imf{H^\circ}{X\circ C}$ 
%	by showing that $\proba{X\circ C_t<r}=1$. 
%	Indeed, since $X\circ C$ is a right-continuous strong Markov process, % need reference that this is enough. 
%	the set $B=\set{t: X\circ C_t=r}$ is regenerative, hence coincides with the image of a subordinator. 
%	However, change of variables implies that $\imf{\leb}{B}=\leb{\set{t: X_t=r}}$. 
%	Since $X$ has either a Brownian component or infinite activity by virtue of having sample paths of unbounded variation, 
%	then the law of $X_t$ has no atom for any $t>0$ and so\begin{linenomath}\begin{esn}
%	\esp{\imf{\leb}{B}}=\int_0^\infty \proba{X_t=r}\, dt=0. 
%	\end{esn}\end{linenomath}Hence the subordinator whose range is $B$ has no drift (cf. Proposition 1.8 of \cite{MR1746300}) and hence
%	\begin{linenomath}\begin{equation}
%	\label{timeChangedProcessNotAtThresholdEquation}
%	\proba{t\in B}=\proba{X\circ C_t=r}=0
%	\end{equation}\end{linenomath}for any $t>0$, thanks to Theorem 4 of \cite[Ch. III]{MR1406564}. 

%% Continuity
We first prove that $\imf{H}{X}\circ C$ is continuous. 
%Suppose that $X\circ C_t<r$. 
%Since $X$ has no-negative jumps, then $X_s<r$ for all $s$ in a neighborhood $(a,b)$ of $C_t$. 
%Hence, $A$ is linear with slope $1$ on $U$, so that $C$ is also linear with slope $1$ on $(A_a,A_b)$, so that $\imf{H}{X}\circ C$ is continuous at $t$. 
%
%It remains to prove that $\imf{H}{X}\circ C$ is continuous at $t$ whenever $X\circ C_t=r$. 
%We prove this by showing that for any connected component $(a,b)$ of $\set{t\geq 0: X_t>r}$, have that $\imf{H}{X}_a=\imf{H}{X}_b$. 
Since $H$ is continuous, we only to see what happens at the discontinuities of $C$. 
A discontinuity of $C$ at $t$ corresponds to an excursion interval of $X$ above $r$: $X>r$  on $(C_{t-},C_t)$. 
Our aim is to prove that $\imf{H}{X}\circ {C_{t-}}=\imf{H}{X}\circ {C_{t}}$. 
%It might happen that the excursion started by a jump (so that $X_{C_{t-}}>r$) or starts continuously ($X_{C_{t-}}=r$). 
%(By lack of negative jumps, $X_{C_t}$ always equals $r$.) 
% Need to see why I always jump from below the level, but that is probably related to the renewall function??? Maybe not relevant. 
Notice that all excursion intervals can be captured by defining, 
for each rational $u\geq 0$, 
\begin{linenomath}\begin{esn}
d_u=\inf\set{s\geq u: X_t\leq r}\quad\text{and}\quad  g_u=\sup\set{s\leq u: X_s\leq u}. 
\end{esn}\end{linenomath}
Then excursion intervals are of the form $(g_u,d_u)$, whenever $g_u<d_u$ (which happens whenever $X_u>r$). 
Hence, it suffices to prove that $H_{g_u}=H_{d_u}$ for every rational $u$. 
%By adapting 
%% actually...correcting 
%the proof of Lemma 1.2.1 of \cite{MR1954248}, 
%we see that $\imf{H^\circ}{X}_s$ exists as a finite limit for every $s$ such that 
%there exists $t\geq s$ with $X_t>\underline X_{[s,t]}\geq  X_{s-}$. 
Note that $d_u$ is a stopping time. 
By regularity, for any rational $v>d_u$, we have that $\underline X_{[d_u,v]}<r $. 
Hence we can define $\rho_v$ to be the (unique) instant at which $\underline X_{[d_u,v]}=X_{\rho_v}$ and note that $\rho_v\to d_u$ as $v\to d_u$. 
Also, we can define $\gamma_v=\sup\set{s\leq u: X_{s}\leq X_{\rho_v}}$, so that $X_{\gamma_v-}, X_{d_v-}\leq \underline X_{[\gamma_v,v]}<X_v$ and $\gamma_v\to g_u$ as $v\downarrow d_u$. 
Using \eqref{localTimeLemmaFromDLG},
we see that $H_{\rho_v}=L^v_v-L^v_{v-\rho_v}$ and $H_{\gamma_v}=L^v_v-L^v_{v-\gamma_v}$. 
However, using the support properties of local times (cf. Theorem 4.iii of \cite{MR1406564}), we see that  $L^v$ does not increase on the inverval $[v-\gamma_v,v-\rho_v]$ so that $H_{\rho_v}=H_{\gamma_v}$. 
By continuity of $H$, we see that $H_{g_u}=H_{d_u}$. 

%\defin{An argument for upward times.}
%	Suppose now that $X\circ C_t<r$ and that $C_t$ is upward for $X$. 
	Suppose now that $t$ is upward for $X\circ C$. 
	Then $\imf{X\circ C}{t-}<r$. 
	Indeed,  this is clear if $\Delta X\circ C_t>0$. 
	On the other hand, if $\Delta X\circ C_t=0$ and $t$ is upward for $X\circ C$, the equality $X\circ C_t=r$ would then imply the existence of $s>t$ such that $X\circ C$ is constant on $[t,s]$, which is impossible thanks to the proof of Proposition 7 of  \cite{splitting1}. 
	%% Arreglar!!!! con nueva versi'on del art'iculo.
	By considering these two cases, since $X\circ C(t-)<r$, 
	we deduce the existence of a rational $u>t$ such that $X(C(t)-)\leq \underline{X}_{[C(t),u]}$, 
	so that $C_t$ is upward for $X$. 
	Then, for any $\eps\in (0,r-X\circ C_{t-})$ and $s<t$, the inequality $X_s-\underline X_{[s,C_t]}<\eps$ implies $X_s<r$. 
	Also, we have that $\underline{X\circ C}_{[s,t]}=\underline X_{[C_s,C_t]}$.
	Then, by change of variables and \eqref{localTimeLemmaFromDLG}:
	\begin{linenomath}
	\begin{align*}
	\imf{H^u}{X\circ C}_t 
	&=\imf{H^\circ}{X\circ C}_t\\
	&=\liminf_{k\to\infty}\frac{1}{\eps_k}\int_0^{t} \indi{X\circ C_s-\underline{X\circ C}_{[s,t]}<\eps_k}\, \d{s}
	\\&=\liminf_{k\to\infty}\frac{1}{\eps_k}\int_0^{t} \indi{X\circ C_s-\underline{X}_{[C_s,C_t]}<\eps_k}\, \d{s}
	\\&=\liminf_{k\to\infty}\frac{1}{\eps_k}\int_0^{C_t} \indi{X_s-\underline X_{[s,C_t]}<\eps_k}\indi{X_{s}\leq r}\, \d{s}
	\\&=\liminf_{k\to\infty}\frac{1}{\eps_k}\int_0^{C_t} \indi{X_s-\underline X_{[s,C_t]}<\eps_k}\, \d{s}
	\\&=\imf{H }{X}\circ C_t%\qedhere
	\end{align*}\end{linenomath}
	Finally, $\imf{H^u}{X\circ C}$ is densely defined (since every jump time $t$ of $X\circ C$ is upward and these jump times are dense on the interval of definition of $X\circ C$). Hence, its continuous extension is unique. 
\end{proof}
To explain why we can construct a continuous version of the height process of $X\circ C$ under $\p_x$ and $\p^\rightarrow$, recall that the laws $\p_x$ and $\p_x^{\#}$ are equivalent on $\F_t$ for each $t>0$, 
so that $\imf{H}{X}\circ C$ is a continuous extension of  $\imf{H^u}{X\circ C}$ under $\p_x$. By killing, we see that  $\imf{H^u}{X\circ C}$ admits the continuous extension $H\circ C$ under $\q_x$ (which stands for the image of $\p_x$ under killing when reaching zero) for any $x\geq 0$.

Recall that $\p^{\rightarrow}_x$ is the law of the post minimum $X^\rightarrow$ process under $\p_x$. 
Hence, if $H$ is a continuous extension of  $\imf{H^u}{X}$ and $m$ is the unique time at which $X$ reaches its minimum, then $\tilde H=H_{m+\cdot}$ will be a continuous extension  of $\imf{H^u}{X^{\rightarrow}}$. 
%An additional construction of a continuous modification of the height process of $X$ under $\p^\rightarrow$ is found in Lemma 8 of \cite{MR1883717}. 
The time-change $C$ is the identity until $X^\rightarrow$ reaches the threshold $r$ after which the process has the same law as $X$ under $\p_r$ conditioned on remaining poisitive. 
So,  $\tilde H\circ C$ is still a continuous extension of $\imf{H^u}{X^\rightarrow\circ C}$. 

We have seen that, for each one of the processes $X^i\circ C^i$, there exists a continuous extension $H^i$ of $\imf{H^u}{X^i\circ C^i}$.
We now construct a continuous extension of  $\imf{H^u}{Y^r}$. 
\begin{proposition}
Define $H$ as follows: 
for any $i\geq 1$ and $t\in [T_{i},T_{i+1})$, 
let
\begin{linenomath}\begin{esn}
g_t=\sup\set{s\leq T_{i}: Y^r_s\leq \underline Y^r_{[T_i,t]}}
\end{esn}\end{linenomath}and define
\begin{linenomath}
\begin{esn}
H=H^1\text{ on }[0,T_1]
\quad \text{and, recursively, }\quad
H_t=H^{i+1}_{t-T_i}+H_{g_t}\text{ for }i\geq 1\text{ and }t\in [T_i,T_{i+1}). 
\end{esn}\end{linenomath}Then, $H$ is a continuous extension of  $\imf{H^u}{Y^r}$. 
\end{proposition}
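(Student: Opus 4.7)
The plan has three components: continuity of $H$, agreement $H_t=\imf{H^\circ}{Y^r}_t$ at every upward time $t$ of $Y^r$, and uniqueness of the continuous extension. Uniqueness will follow once the first two are established, since the set of upward times of $Y^r$ is dense (each jump of $Y^r$, plentiful by density of jumps inside each L\'evy piece, is upward).

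For continuity of $H$, I proceed by induction on the piece index $i$. Within each open interval $(T_i,T_{i+1})$, the summand $H^{i+1}_{t-T_i}$ is continuous by Proposition \ref{timechangedHeightProcessForReflectedLevyProcessProposition}, so it remains to handle $t\mapsto H_{g_t}$. As $t$ increases, $\underline{Y^r}_{[T_i,t]}$ is non-increasing and hence so is $g_t$; at a time $t_0$ where this running minimum strictly drops, $Y^r$ remains strictly above the new minimum on the interval $[g_{t_0},g_{t_0^-}]$, which identifies this interval as an excursion of $Y^r$ above a level. By the ancestor interpretation of the height process applied to the piece containing $g_{t_0}$ and $g_{t_0^-}$ (which, by induction, already carries a well-defined continuous $H$), the values $H_{g_{t_0^-}}$ and $H_{g_{t_0}}$ must agree. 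At each concatenation point $T_{i+1}$ one similarly checks that the one-sided limits match: the right value $H^{i+2}_0+H_{g_{T_{i+1}}}=H_{g_{T_{i+1}}}$ coincides with the left limit of $H^{i+1}_{t-T_i}+H_{g_t}$ because the last individual visited by $X^{i+1}\circ C^{i+1}$ and the starting individual of $X^{i+2}\circ C^{i+2}$ share the common ancestor pinpointed by $g_{T_{i+1}}$ in the underlying chronological tree.

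For the agreement at an upward time $t\in[T_i,T_{i+1})$, I decompose the defining integral of $\imf{H^\circ}{Y^r}_t$ into integrals over $[0,g_t]$, $[g_t,T_i]$ and $[T_i,t]$. Over $[T_i,t]$, the infimum $\underline{Y^r}_{[s,t]}$ coincides with the infimum within the current piece, so the contribution equals $H^{i+1}_{t-T_i}$ by the same change-of-variables computation as in Proposition \ref{timechangedHeightProcessForReflectedLevyProcessProposition}. Over $[0,g_t]$, the bottleneck identity $Y^r_{g_t}=\underline{Y^r}_{[T_i,t]}$ gives $\underline{Y^r}_{[s,t]}=\underline{Y^r}_{[s,g_t]}$ for $s\le g_t$, and upwardness of $t$ for $Y^r$ transfers inductively to upwardness of $g_t$ for the concatenation of the preceding pieces, so the contribution equals $H_{g_t}$. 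The residual integral over $(g_t,T_i)$, where $Y^r$ stays strictly above $\underline{Y^r}_{[T_i,t]}$, vanishes after normalization in the $\liminf$ via standard local-time arguments.

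The main technical obstacle is the continuity at the concatenation points $T_{i+1}$, since both $H^{i+1}_{t-T_i}$ and $H_{g_t}$ can have non-trivial left limits there and one must verify that their sum is continuous. This reduces, through \eqref{localTimeLemmaFromDLG} and the local-time identities used in the proof of Proposition \ref{timechangedHeightProcessForReflectedLevyProcessProposition}, to the genealogical fact that the end of one subtree and the root of the next correspond to the same individual in the underlying chronological tree, so that their heights agree.
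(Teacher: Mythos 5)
Your overall strategy --- continuity of $H$ handled through the jumps of $t\mapsto g_t$, agreement with $\imf{H^\circ}{Y^r}$ at upward times through a decomposition of the defining integral, and uniqueness from density of upward times --- is the same as the paper's. The decomposition of the integral into the pieces $[0,g_t]$, $[g_t,T_i]$, $[T_i,t]$ and the identification of the first and last contributions with $H_{g_t}$ (by induction) and $H^{i+1}_{t-T_i}$ (by the change of variables of Proposition \ref{timechangedHeightProcessForReflectedLevyProcessProposition}) are correct and match the paper. The problem is that at the two genuinely delicate points you assert rather than prove, and in one of them the justification you offer would not survive scrutiny. For the residual integral over $(g_t,T_i)$ you invoke the fact that $Y^r$ ``stays strictly above'' the level $\ell=\underline{Y^r}_{[T_i,t]}$ there. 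Pointwise strictness is not enough: when the excursion of $Y^r$ above $\ell$ beginning at $g_t$ starts continuously, $Y^r_s\to\ell$ as $s\downarrow g_t$, so the set $\set{s\in(g_t,T_i): Y^r_s-\underline{Y^r}_{[s,t]}\leq \eps_k}$ has positive Lebesgue measure for \emph{every} $k$, and the whole point is to show that this measure is $\imf{o}{\eps_k}$ along the subsequence. This is where the paper does real work: it splits $(g_t,T_i)$ at the end of the concatenation piece containing $g_t$ (beyond that point the indicator is identically zero for small $\eps$, since the level is not approached there), and on the remaining initial interval it introduces approximating upward times $g_t^k\downarrow g_t$ at which $Y^r$ first drops below $\ell+\eps_k$, bounding the normalized integral by the height process of the single L\'evy piece evaluated at $g_t^k-g_t$, which tends to zero by the continuity established in Proposition \ref{timechangedHeightProcessForReflectedLevyProcessProposition}. ``Standard local-time arguments'' is precisely the step that has to be supplied here.

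The second gap is in the continuity of $t\mapsto H_{g_t}$. You treat $g_{t}$ and its right limit as lying in ``the piece'' (singular) and appeal to constancy of the height process over the excursion interval of $Y^r$ between them. But the jump of $g$ can straddle one or more concatenation times $T_l$, in which case $H_{g_t}$ and $H_{g_{t+}}$ are computed by \emph{different} branches of the recursion; the identity to be proved is then $H_{g_t}=H_{g_{t+}}+H^{l+1}_{g_t-T_l}$, and one must show that the extra term vanishes because the minimum of the $(l+1)$-st piece is attained at $g_t$, so that by \eqref{localTimeLemmaFromDLG} and the support property of local times the corresponding height is zero. Your ``ancestor interpretation'' names the right heuristic but skips this case distinction, which is exactly where the recursive definition of $H$ could in principle introduce a discontinuity. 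With these two steps filled in, your argument coincides with the paper's.
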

\begin{proof}
% Continuity
Recall that $H^i_0=0=\lim_{t\to 0+}H^i_{t}$. 
Also, $g_{T_i}=T_i$. 
We then see that $H$ is continuous at each $T_i$. 
To prove that $H$ is continuous at $t\in (T_i,T_{i+1})$ for some $i\geq 1$, it suffices to show that $t\mapsto H_{g_t}$  is continuous there. 
However, let us note that $t\mapsto g_t$ is decreasing and c\`agl\`ad on $(T_i,T_{i+1})$. 
It might then happen that $g_{t+}<g_t$ and they fall on different intervals $(T_k,T_{k+1})$ and $(T_l,T_{l+1})$ with $k<l<i$. 
% This might happen if $Y^r$ and its cumulative minimum on (T_i,T_{i+1}) decrease to the right of $t$ 
% and $Y^r_t$ is the level of the minimum of $Y^r$ on $(T_{i_1},T_i)$. 
However, by definition, this implies\begin{linenomath}\begin{esn}
H_{g_t}=H_{g_{t+}}+H^{l+1}_{g_t-T_l}
\end{esn}\end{linenomath}and since the minimum of $Y^r$ on $(T_l,T_{l+1})$ 
is attained at $g_t$, then $H^{l+1}_{g_t-T_l}=0$ and $H_{g_t}=H_{g_{t+}}$. 
Indeed, note that for any rational $v\in (g_t,T_{l+1})$, 
formula \eqref{localTimeLemmaFromDLG} gives $L^{v}_v-L^v_{v-g_t}$ 
and that by support properties of local times, $L^v$ is constant on $[v-g_t,v]$. 
It remains to consider the case when $g_t,g_{t+}\in (T_l,T_{l+1})$ for some $l<i$. 
In this case, we note that $(C^{l+1}_{g_{t+}-T_l}, C^{l+1}_{g_t-T_l})$ is an excursion interval of $X^l$ above its future minimum process so that in particular $g_t-T_l$ is upward. 
Hence, the height process of $X^l$ is constant on that interval 
(again by \eqref{localTimeLemmaFromDLG} and support properties of local times), 
which implies $H_{g_{t}}=H_{g_{t+}}$. 
We conclude that $H$ is continuous. 

% Extension
Let us now prove that $H$ is an extension of $\imf{H^u}{Y^r}$. 
%% Modification
%Let us now prove that $H$ is a modification of $\imf{H^\circ}{Y^r}$. 
%Let $t\geq 0$. 
%It suffices to prove that $\imf{H^\circ}{Y^r}_t\neq H_t$ almost surely on $t\in [T_i,T_{i+1})$. 
%At the same time we will 
We need to prove that 
$H_t
%=H^i_t
=H^{\circ }(Y^r)_t$ for every upward time $t\in [T_i,T_{i+1})$  for $Y^r$ and for any $i$. 
%such that there exists $u>t$ with $Y^r_{t-}\leq \underline{Y^r}_{[t,u]}$. 
%Let us agree to say that $t$ is not a (right) tip (of a line of descent) 
%if the latter condition holds. 

On $t\leq T_1$, we see that $H_t=H^1_t=H^{\circ }(Y^r)_t$ 
%almost surely for fixed $t$ and 
if $t$ is upward for $Y^r$ by definition of $H$ and Proposition \ref{timechangedHeightProcessForReflectedLevyProcessProposition}. 
%% Need to say what happens when $t=T_i$ for some $i$. 
To proceed by induction, assume that for some $j\geq 1$, 
$H_t=\imf{H^\circ}{Y^r}_t$ 
%almost surely on the set 
if $t\leq T_j$ and $t$ is upward for $Y^r$. 
If we now work on the set $t\in (T_{j},T_{j+1})$, 
note that $g_t\in [T_i,T_{i+1})$ for some $i<j$. 
Note that both $H$ and $\imf{H^\circ}{Y^r}$ can be decomposed  as\begin{linenomath}\begin{align}
H_t&=H_{g_t}+H^{j+1}_{t-T_j}
\intertext{and}
\imf{H^\circ}{Y^r}_t
=\imf{H^\circ}{Y^r}_{g_t}
%+\bra{\imf{H^\circ}{Y^r}_{T_{i+1}}-\imf{H^\circ}{Y^r}_{g_t}}
&+\liminf_{k\to\infty}\frac{1}{\eps_k}\int_{g_t}^{T_{i+1}} \indi{Y^r_s-\underline Y^r_{[s,t]}\leq \eps_k}\, ds
\\&+
%\bra{\imf{H^\circ}{Y^r}_{T_j}-\imf{H^\circ}{Y^r}_{T_{i+1}}}
\liminf_{k\to\infty}\frac{1}{\eps_k}\int_{T_{i+1}}^{T_j} \indi{Y^r_s-\underline Y^r_{[s,t]}\leq \eps_k}\, ds
+
%\bra{\imf{H^\circ}{Y^r}_{t}-\imf{H^\circ}{Y^r}_{T_j}}.\nonumber
\liminf_{k\to\infty}\frac{1}{\eps_k}\int_{T_j}^{t} \indi{Y^r_s-\underline Y^r_{[s,t]}\leq \eps_k}\, ds\nonumber
\end{align}\end{linenomath}We now prove that almost surely, the first and last summands in both decompositions coincide and that the second and third summands in the decomposition of $\imf{H^\circ}{Y^r}_t$ are zero. 
%Suppose that $g_t\in [T_i,T_{i+1})$ for some $i<j$; since $Y^r_t<r$ almost surely because of \eqref{timeChangedProcessNotAtThresholdEquation}, we see that $g_t$ cannot be equal to any of the $T_i$. 
\begin{description}
\item[First summand]
By construction, $g_t$ is upward for $Y^r$ and $g_t\leq T_j$. 
The induction hypothesis hence implies the equality $H_{g_t}=\imf{H^\circ}{Y^r}_{g_t}$. 
\item[Last summand] 
Note that $t-T_j$ is upward for $X^{j+1}\circ C^r$. 
Since $H^{j+1}$ is a continuous extension of $\imf{H^u}{X^{j+1}\circ C^r}$, we obtain
\begin{linenomath}\begin{align*}
H^{j+1}_{t-T_{j}}
&=\imf{H^\circ}{X^{j+1}\circ C^r}_{t-T_j}
\\&=\liminf_{k\to\infty}\frac{1}{\eps_k}\int_{0}^{t-T_j} \indi{X^{j+1}\circ C^r_s-\underline X^{j+1}\circ C^r_{[s,t]}\leq \eps_k}\, ds
\\&=\liminf_{k\to\infty}\frac{1}{\eps_k}\int_{T_j}^{t} \indi{Y^r_s-\underline Y^{r}_{[s,t]}\leq \eps_k}\, ds. 
%\imf{H^\circ}{Y^r}_t-\imf{H^\circ}{Y^r}_{T_j}
\end{align*}\end{linenomath}%\defin{Need?} to say that $\underline X\circ C<r$... as in a prev. prop. 
\item[Third summand] Note that $Y^r_s> \underline Y^r_{[T_j,t]}$ for any $s\in [T_{i+1},T_{j}]$. 
Hence, 
\begin{linenomath}\begin{esn}
\int_{T_{i+1}}^{T_{j}}\indi{\underline Y^r_u- \underline Y^r_{[u,t]}\leq \eps}\, du=0
\end{esn}\end{linenomath}for $\eps$ small enough. 
\item[Second summand] Note that $g_t$ is an upward time. 
Define also the upward time 
\begin{lesn}
g_t^k=\sup\set{s\leq T_{i+1}: Y^r_s\leq \eps_k+\underline Y^r_{[T_j,t]}},
\end{lesn}which decreases to $g_t$ as $k\to\infty$. 
Let $v$ be rational in $(g_{t}^1,T_{i+1})$ and such that $Y^r_{g_t^1-}\leq \underline Y^r_{[g_t^1,v]}$. 
Hence,  we also get $Y^r_{g_t^k-}\leq \underline Y^r_{[g_t^k,v]}$ for any $k$ as well as $Y^r_{g_t-}\leq \underline Y^r_{[g_t,v]}$. 
Note that\begin{lesn}
\int_{g_t}^{T_{i+1}}\indi{Y^r_s-\underline{Y^r}_{[s,t]}\leq \eps_k}\, ds
\leq \int_{0}^{g_t^k-g_t}\indi{X^{i+1}\circ C^{i+1}_s-\underline{X^{i+1}\circ C^{i+1}}_{[s,\infty)}\leq \eps_k}\, ds. 
\end{lesn}Thanks to 
%\eqref{localTimeLemmaFromDLG}, 
Proposition \ref{timechangedHeightProcessForReflectedLevyProcessProposition} 
we get
\begin{lesn}
\liminf_{\eps_k\to\infty}\frac{1}{\eps_k}\int_{g_t}^{T_{i+1}}\indi{Y^r_s-\underline{Y^r}_{[s,t]}\leq \eps_k}\, ds
%\leq \liminf_{k\to\infty} \imf{L^v_{v-g_t^k}}{X\circ C} -\imf{L^v_{v-g_t}}{X\circ C} =0.
\leq \liminf_{k\to\infty} \imf{H}{X}\circ C_{g^k_t-g_t}=0. 
\end{lesn}
\end{description}Hence, $H$ is a continuous extension of $\imf{H^u}{Y^r}$. 
\end{proof}

Let us now turn to the construction of the supercritical L\'evy tree. 
For this, let $H^r$ denote the continuous modification of the height process of $Y^r$. 
We let $\tr{g}^r=\paren{\paren{\tau_r,d_r,\rho_r}, \leq_r,\mu_r}$ denote the TOM tree coded by $H^r$ and define $\zeta_r=\imf{\mu_r}{\tau_r}$. 
Let us see that $\tr{g}^r$ is a subtree of $\tr{g}^{r'}$ if $r\leq r'$. 
\begin{lemma}
\label{isometricEmbeddingBetweenPartsOfSupercriticalLevyTreeLemma}
If $r\leq r'$ then there exists an isometry $\fun{\iota}{\tau_r}{\tau_{r'}}$ %embedding
such that
\begin{enumerate}
\item if $\sigma_1\leq_r \sigma_2$ then $\imf{\iota}{\sigma_1}\leq_{r'}\imf{\iota}{\sigma_2}$ and
\item the image of $\mu_{r'}$ under $\iota$ is the trace of $\mu_{r'}$ on $\iota(\tau_r)$. 
%$\imf{\mu_{r'}}{A\cap \imf{\iota}{\tau_r}}=\imf{\mu_{r}}{\imi{A}{\iota}}$. 
\end{enumerate}
\end{lemma}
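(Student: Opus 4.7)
The plan is to establish the stronger pathwise identity
\[
H^r = H^{r'}\circ C^{r,r'},
\]
where $C^{r,r'}$ is the time-change of Subsubsection \ref{codingSequenceSubSubSection} associated to truncation of $Y^{r'}$ at level $r$, and then to read the conclusions off the general coding-sequence framework. First, I would couple the processes $Y^r$ for all $r\ge 0$ on a single probability space by realizing each $Y^r$ as the contour of the truncation at level $r$ of one fixed splitting tree, as in Corollary 8 and Proposition 9 of \cite{splitting1}. This gives the pathwise identity $Y^r=Y^{r'}\circ C^{r,r'}$, with $C^{r,r'}$ the right-continuous inverse of $A^{r,r'}_t=\int_0^t \mathbf{1}_{Y^{r'}_s\le r}\,\mathrm ds$.

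Next, I would prove $H^r=H^{r'}\circ C^{r,r'}$ almost surely. The argument mirrors Proposition \ref{timechangedHeightProcessForReflectedLevyProcessProposition}, but now applied to the concatenated process $Y^{r'}$ rather than to a single reflected L\'evy process. One decomposes $Y^{r'}$ into its constituent pieces $X^i\circ C^i$ in the $r'$-construction, identifies for each piece the further time-change needed to stay below $r$, and applies Proposition \ref{timechangedHeightProcessForReflectedLevyProcessProposition} piece by piece. The recursive definition of the height process in terms of the pieces $H^i$ and the auxiliary infimum times $g_t$ then assembles the piecewise identities into the global one, provided the running infima $\underline Y^{r'}_{[T_i,t]}$ used to define the $g_t$'s behave well under $C^{r,r'}$. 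This is precisely the point where one uses that excursions of $Y^{r'}$ above $r$ start and end at height $r$, so that removing them does not alter running infima taken at levels $\le r$. Once the identity holds, set $\iota([t]_{H^r})=[C^{r,r'}_t]_{H^{r'}}$; well-definedness and the isometry property then follow because
\[
H^r_s+H^r_t-2\inf_{u\in[s,t]}H^r_u \;=\; H^{r'}_{C^{r,r'}_s}+H^{r'}_{C^{r,r'}_t}-2\inf_{u\in[C^{r,r'}_s,C^{r,r'}_t]}H^{r'}_u,
\]
the last equality using monotonicity of $C^{r,r'}$ and the constancy of $H^{r'}$ on the removed plateaux (the same excursion-above-$r$ argument). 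Order preservation is immediate from monotonicity of $C^{r,r'}$. For the measure condition (reading the statement as saying that $\iota_*\mu_r$ equals the trace of $\mu_{r'}$ on $\iota(\tau_r)$), one uses that $C^{r,r'}_*\mathrm{Leb}=\mathbf{1}_{Y^{r'}_s\le r}\,\mathrm ds$ on $[0,\zeta_{r'}]$ and that the canonical projection into $\tau_{r'}$ sends this measure to the trace of $\mu_{r'}$ on the image of $\iota$.

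The main obstacle is the second step. The difficulty is that the decomposition of $Y^r$ into pieces $X^{i,r}\circ C^{i,r}$ in the $r$-construction does \emph{not} match the $r'$-decomposition: a single piece at level $r'$ may straddle level $r$ several times and so correspond to several pieces at level $r$. Consequently one must carefully track how the auxiliary minima (which govern the recursion defining $H^r$ from its pieces $H^i$) transform across pieces under the extra time-change, and how support properties of the local times $L^{(t)}$ of the time-reversed pieces — the tool underlying equation \eqref{localTimeLemmaFromDLG} and the proof of Proposition \ref{timechangedHeightProcessForReflectedLevyProcessProposition} — propagate through the concatenation. Once this bookkeeping is in place, the remaining steps are essentially formal consequences of the general coding-sequence construction of Subsubsection \ref{codingSequenceSubSubSection}.
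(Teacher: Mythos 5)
Your proposal follows the same route as the paper's proof: couple the contours pathwise, establish $H^r=H^{r'}\circ C^{r',r}$ by leaning on Proposition \ref{timechangedHeightProcessForReflectedLevyProcessProposition}, set $\iota\paren{[s]_{H^r}}=[C^{r',r}_s]_{H^{r'}}$, and read off the isometry, order and measure claims from the behaviour of running infima and of Lebesgue measure under the time change. Two points in your write-up need repair, though neither changes the architecture. First, $H^{r'}$ is \emph{not} constant on a removed interval $(C^{r',r}_{t-},C^{r',r}_{t})$: that interval carries an excursion of $Y^{r'}$ above level $r$, whose subtree pushes $H^{r'}$ strictly above its endpoint value, so "constancy on the removed plateaux" is false. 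What the infimum identity actually requires is the weaker statement that $H^{r'}_v\geq H^{r'}_{C^{r',r}_{t-}}=H^{r'}_{C^{r',r}_{t}}$ for $v$ in the removed interval, which is what the paper proves via upward times, \eqref{localTimeLemmaFromDLG} and the support properties of the reversed local times; note that equality of the two endpoint values alone would not suffice either, since the infimum over $[C^{r',r}_{s_1},C^{r',r}_{s_2}]$ could otherwise dip below the infimum over the image of $[s_1,s_2]$. Second, order preservation is not quite immediate from monotonicity of $C^{r',r}$: the total order compares suprema of equivalence classes, and $[C^{r',r}_s]_{r'}$ may strictly contain the image of $[s]_r$ under the time change, so one must verify that $C^{r',r}_{\sup[s]_r}=\sup[C^{r',r}_s]_{r'}$; this follows from the infimum identity applied on $[s_*,s_*+\eps]$ with $s_*=\sup[s]_r$, exactly as in the paper. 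With these two corrections your argument coincides with the published one.
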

\begin{proof}
For this proof, we denote by $A^{r',r}_t=\int_0^t \indi{Y^{r'}_s\leq r}\, ds$ and 
let $C^{r',r}$ be its right-continuous inverse. 
We will suppose that the time-change consistency of the $Y^r$ is valid pathwise, so that $Y^{r'}\circ C^{r',r}=Y^r$. 
Also, the proof of Proposition \ref{timechangedHeightProcessForReflectedLevyProcessProposition} allows us to see that: if $s$ is upward for $Y^r$ then $Y^r_{s-}<r$,  $C^{r',r}_s$ is upward for $Y^{r'}$, and 
\begin{lesn}
H^r=H^{r'}\circ C^{r',r}.
\end{lesn}We will also denote consider the set $[s]_r$ to be the equivalence class of $s$ under $\sim_{H^r}$. 
Note that $H^r$ is defined on $[0,\zeta_r]$.

To construct $\iota$, we will define $\tilde\iota$ on $[0,\zeta_r]$ by $\imf{\tilde\iota }{s}=C^{r',r}_s\in [0,\zeta_{r'}]$. 
Let $s_1<s_2$. 
Let us observe that\begin{equation}\label{timeChangedMinimumHeightProcessEquation}
\underline H^{r'}_{[C^{r',r}_{s_1},C^{r',r}_{s_2}]}=\underline H^r_{[s_1,s_2]}.
\end{equation}Indeed, note  first that on any interval of the form $I=[C^{r',r}_{s-},C^{r',r}_{s}]$ with $s\in [s_1,s_2]$,  
we have the inequality $H^{r'}_v\geq H^{r'}\circ {C^{r',r}_{s-}}$  for $v\in I$. 
We will prove it when $v$ is an upward time. 
Consider a rational $w\in (v,C^{r',r}_s)$ such that $Y^{r'}_{v-}\leq \underline Y^{r'}_{[v,w]}$. 
Since $Y^{r'}$ has an excursion above $r$ on $I$, 
we see that $Y^{r'}_{C^{r',r}_{s-}-}\leq \underline Y^{r'}_{[C^{r',r}_{s-},w]}$ and so \eqref{localTimeLemmaFromDLG} gives 
\begin{lesn}
H^{r'}_v=L^{w}_{w}-L^w_{w-v}\geq L^{w}_{w}-L^w_{w-C^{r',r}_{s-}}=H^{r'}_{C^{r',r}_{s-}}. 
\end{lesn}By continuity of the height process
  \begin{linenomath}\begin{lesn}H^{r'}\circ {C^{r',r}_{s-}}=H^{r}_s=H^{r'}\circ {C^{r',r}_{s}}\geq \underline H^r_{[s_1,s_2]}.
  \end{lesn}\end{linenomath}Hence, the equality $H^r=H^{r'}\circ C^{r',r}$ allows us to conclude the validity of equation \eqref{timeChangedMinimumHeightProcessEquation}. 

We now assert that if $[s_1]_r=[s_2]_r$ then $[C^{r',r}_{s_1}]_{r'}=[C^{r',r}_{s_2}]_{r'}$. 
  By hypothesis $H^r_{s_1}=H^r_{s_2}=\underline H^r_{[s_1,s_2]}$. 
    % We want to see that $H^{r'}\circ {C^{r',r}_{s_1}}=H^{r'}\circ {C^{r',r}_{s_2}}=\underline H^{r'}_{ [C^{r',r}_{s_1},C^{r',r}_{s_2}]}$.
  Hence, $H^{r'}\circ {C^{r',r}_{s_1}}=H^{r'}\circ {C^{r',r}_{s_2}}$ and by equation \eqref{timeChangedMinimumHeightProcessEquation}, 
  we see that\begin{linenomath}\begin{esn} \underline H^{r'}_{[C^{r',r}_{s_1},C^{r',r}_{s_2}]}=H^{r'}_{C^{r',r}_{s_1}}.
  \end{esn}\end{linenomath}We can then define\begin{linenomath}\begin{esn}
  \imf{\iota}{[s]_{r}}=[C^{r',r}_s]_{r'}.
  \end{esn}\end{linenomath}We have just proved that $\imf{C^{r',r}}{[s]_r}\subset [C^{r',r}_s]_{r'}$. 
  Although the converse inclusion might be false, we now see that nonetheless if $s_*=\sup [s]_r$ * (which belongs to $[s]_r$) then $C^{r',r}_{s_*}=\sup [C^{r,r'}_s]_{r'}$. 
  Indeed, we have just proved that $C^{r',r}_{s_*}\in \sup [C^{r,r'}_s]_{r'}$ and by hypothesis, for any $\eps>0$ we have $\underline H^{r'}_{[C^{r',r}_{s_*},C^{r',r}_{s_*+\eps}]}=\underline H^r_{[s_*,s_*+\eps]}<H^r_{s_*}$. 
  We conclude that $C^{r',r}_{s_*+\eps}\not\in [C^{r,r'}_s]_{r'}$ for any $\eps>0$, so that
  $C^{r',r}_{s_*}=\sup [C^{r,r'}_s]_{r'}$. 
  
%  Since $C^{r',r}$ removes the excursions of $Y^{r'}$ above $r$, we see that the converse set inclusion holds, so that:
%  \begin{equation}
%  \label{imageOfEqClassUnderTimeChangeEquality}
%  \imf{C^{r',r}}{[s]_r}= [C^{r',r}_s]_{r'}
%  \end{equation}  
% Isometry

To see that $\iota$ is an isometry, 
  note that if $s_1<s_2$ (say) then the distance of $[s_1]_{r}$ and $[s_2]_r$ equals, by equation \eqref{timeChangedMinimumHeightProcessEquation}:
  \begin{linenomath}\begin{esn}
  H^r_{s_1}+H^r_{s_2}-2\underline H^r_{[s_1,s_2]}
  =H^{r'}_{C^{r',r}_{s_1}}+H^{r'}_{C^{r',r}_{s_2}}-2\underline H^{r'}_{[C^{r',r}_{s_1},C^{r',r}_{s_2}]},
  \end{esn}\end{linenomath}and the right-hand side is the distance between $[C^{r',r}_{s_1}]_{r'}$ and $[C^{r',r}_{s_2}]_{r'}$. 

% Order preserving
The order preserving character of $\iota$ is immediate since we have proved that $C^{r',r}_{\sup [s]_{r}}=\sup [C^{r,r'}_s]_{r'}$. Hence, if $s_1=\sup [s_1]_r\leq \sup [s_2]_{r}=s_2$ then
%$\sup[C^{r',r}_{s_1}]_{r'}\leq \sup[C^{r',r}_{s_2}]_{r'}$. 
\begin{linenomath}\begin{esn}
\sup[C^{r',r}_{s_1}]_{r'}=C^{r',r}_{s_1}
\leq C^{r',r}_{s_2}= \sup[C^{r',r}_{s_2}]_{r'}. 
\end{esn}\end{linenomath}

% Measure mapping. 
Consider the image of Lebesgue measure on $[0,\zeta_r]$ under $C^{r',r}$. 
Since the inverse image of an interval $[0,t=$ under $C^{r',r}$ is $[0,A^{r',r}_t)$, we see that 
the image of Lebesgue measure on $[0,\zeta_r]$ under $C^{r',r}$ equals the measure induced by $A^{r',r}$. 
The latter is Lebesgue measure concentrated on $\set{t: Y^{r'}_s\leq r}$. 
By projecting to each of the trees coded by $Y^r$ and $Y^{r'}$ we see that 
$\imf{\mu_{r'}}{A\cap \imf{\iota}{\tau_r}}=\imf{\mu_{r}}{\imi{A}{\iota}}$. 
\end{proof}

Thanks to Lemma \ref{isometricEmbeddingBetweenPartsOfSupercriticalLevyTreeLemma}, and a direct limit argument used for the construction of locally compact TOM trees out of trees consistent under truncation, we deduce the existence of a locally compact TOM tree $\paren{\paren{\Gamma,d,\rho},\leq,\mu}$ and a growing sequence of subTOMtrees $\paren{\paren{\Gamma_r,d,\rho},\leq,\mu_r}$ (where $\mu_r$ is the restriction of $\mu$ to $\Gamma_r$) such that $\bigcup_r \Gamma_r=\Gamma$ and $\Gamma_r$ is isomorphic to the tree coded by $H^r$. 
The law of $\paren{\paren{\Gamma,d,\rho},\leq,\mu}$ will be denoted $\gamma^{\text{lc}}$. 
We also define $\gamma^c$ as the law of the tree coded by $H$ under $n^\#$ and finally set $\gamma=\gamma^{\text{c}}+b\gamma^{\text{lc}}$; 
for us $\gamma$ represents the \emph{law}  of supercritical L\'evy trees.

\section{Ray-Knight type theorems for supercritical L\'evy trees}
\label{rkSection}
We now pass to an interesting property of our supercritical L\'evy trees: 
their Ray-Knight theorem stated as Theorem \ref{RayKnightTheorem} and Corollary \ref{RayKnightCorollary}. 

To accomplish it, we will give a grafting description for the genealogical tree under $\Upsilon$. 
Then, the analysis will be extended under $\Upsilon_{\text{tree}}$. 
\subsection{A grafting construction for the genealogy under $\Upsilon$ and the corresponding Ray-Knight theorem}
Recall the construction of the TOM tree $S$ with law $\Upsilon$ as the pointwise direct limit of truncated trees $(S^n)$ coded by $(Y^n)$.
Formally, we have not defined the genealogy under $\Upsilon$, for which it suffices to follow the same path as under $\Upsilon_{\text{tree}}$: 
we define $H^n$ as a continuous modification the height process of $Y^n$, 
note that the tree coded by $H^n$, say $G^n$, 
is compatible under pruning, 
and define $G$ as the pointwise direct limit of the sequence $(G^n)$. 

For this, recall the processes $X^0,X^1,\ldots$ used to build $Y^n$ in the proof of Proposition \ref{uniquenessInfiniteLineOfDescentProposition}. 
Let $H^i$ be a continuous modification of the height process of $X^i$. 
We start by noting that $H^0$ has been analyzed in Lemma 8 of \cite{MR1883717}; to present the analysis (to be used) we first collect some preliminaries on $X^0$. 

For simplicity, we will now only consider the case when $\kappa=0$. 
First of all, the laws $\p_x^\rightarrow$, including the law $\p_0^\rightarrow$ of $X^0$, satisfy the following William's type decomposition, first extended to L\'evy processes in \cite{MR1419491} and further discussed in the spectrally positive case in \cite[Ch. 8]{MR2320889}.  For $x>0$, $\p_x^\rightarrow$ equals $\p_x$ conditioned on remaining positive (an event of positive probability). 
Under $\p_x$, the minimum of $X$ is achieved at a unique time and continuously, since $X$ is of infinite variation. 
(This fact was first proved in \cite{MR0433606} and can also be deduced from Proposition 1 and Theorem 1 in \cite{MR2978134}.) 
Let $T$ be the time the minimum is achieved and define the pre and post-minimum processes as $X^{\leftarrow}$ equal to $X$ killed at $T$ and $X^{\rightarrow}=X_{T+\cdot}-X_T$. 
Then, these two processes are independent (both under $\p_x$ as under $\p_x^\rightarrow$). 
(This is a classical and fundamental result of the fluctuation theory of L\'evy processes first found in \cite{MR588409}, which can also be deduced without local time considerations from Theorem 4 in \cite{MR2978134}.) 
Furthermore, under $\p_x$ and $\p_x^\rightarrow$, the law of $x-X_T$ is exponential of parameter $b$ (resp. exponential of parameter $b$ conditioned on being smaller than $x$) and, conditionally on $X_T=y\in (0,x)$, the law of $X^{\rightarrow}$ is $\p^\rightarrow$, while the law of $X^{\leftarrow}$ equals the image of $\q_{y-x}^{\#}$ under the mapping $f\mapsto f+x$. The law $\p^\rightarrow_{x,y}$ equal to $\p_x^\rightarrow$ conditioned on $\underline X_\infty=y$ just described give rise to a weakly continuous disintegration.

Let $\dunderline{X}^0$ be the future infimum process of $X^0$ given by
\begin{linenomath}
\begin{esn}
\dunderline{X}^0_t=\underline{X}^0_{[t,\infty)}=\inf_{s\geq t} X^0_s. 
\end{esn}\end{linenomath}Since our Laplace exponent is supercritical, then $\lim_{t\to\infty}\dunderline{X}^0_t=\infty$ and the set
\begin{linenomath}
\begin{esn}
\dunderline{\mc{Z}}=\set{t\geq 0: X^0_t=\dunderline{X}^0_t}
\end{esn}\end{linenomath}is unbounded while being regenerative. 
% This is only rue when the killing rate is zero. 
More specifically, from Lemma 8.(i) of \cite{MR1883717}, the process $X^0-\dunderline{X}^0$ is regenerative at zero and admits the following reconstruction by excursions.
Let $\dunderline{L}$ be the regenerative local time of $X^0_t-\dunderline{X}^0_t$ fixed by the normalization
\begin{linenomath}
\begin{esn}
\dunderline{L}_t=\lim_{\eps\to 0}\frac{1}{\eps}\int_0^t \indi{X^0_s-{\dunderline{X}}^0_s\leq \eps}\, ds. 
\end{esn}\end{linenomath}By recurrence, we see that $\dunderline{L}_\infty=\infty$. 
Let $\dunderline{\tau}$ be the right continuous inverse of $\dunderline{L}$. 
Then, with this normalization of the local time, the point process of excursions
\begin{linenomath}
\begin{equation}
\label{reflectedFutureMinimumPointProcess}
\sum_{s: \Delta\tau_s\neq 0}\delta_{(s,(X-\dunderline X)_{(\tau_{s-}+\cdot)\wedge\tau_s})}
\end{equation}\end{linenomath}is a Poisson point process on $(0,\infty)\times E$ with intensity
\begin{linenomath}
\begin{equation}
\label{LambertsIntensityForFutureMinimum}
\beta n^\#+\int_0^\infty e^{-bx}\imf{\overline \pi}{x}\, \q^\#_x\, dx.
\end{equation}
\end{linenomath}Note that integral equals the intensity of excursions that start at a positive value, 
corresponding to excursions above the future minimum which start with a jump. 
The excursions only record the jump of $X-\dunderline X$.
We will need a slightly more precise result which records also the jumps of the future minimum 
at the beginnings of excursion times, or equivalently, that records the jump of $X$. 
It can be guessed from the aforementioned Lemma 8.(i) in \cite{MR1883717}. 
\begin{proposition}
\label{characterizationOfExcursionsAboveFutureMinimumProposition}
Under $\p^\rightarrow_0$, 
the point process
\begin{linenomath}
\begin{equation}
\label{reflectedFutureMinimumPointProcessWithJumps}
\Xi^f=\sum_{s: \Delta\tau_s\neq 0}\delta_{(s,\Delta X_{\tau_s},(X-\dunderline X)_{(\tau_{s-}+\cdot)\wedge\tau_s})}
\end{equation}\end{linenomath}is a Poisson point process on $[0,\infty)\times E$ with intensity
\begin{lesn}
\imf{\mu^f}{dy,df}
= \imf{\delta_0}{dy}\beta \imf{n^\#}{df} 
+ %\int_0^\infty
\int_0^y e^{-bx}\imf{\q^\#_x}{df}\, dx\, \imf{\pi}{dy}.
\end{lesn}
\end{proposition}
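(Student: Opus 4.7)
The plan is to deduce this refinement of Lambert's Lemma~8.(i) from \cite{MR1883717}, which already supplies the unmarked intensity \eqref{LambertsIntensityForFutureMinimum}, by enriching the excursion point process \eqref{reflectedFutureMinimumPointProcess} with the mark $\Delta X_{\tau_s}$. Since the mark is a measurable functional of the sample path of $X^0$, the enriched process is automatically well defined; the work is in identifying its intensity and confirming that the Poisson property persists.

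I would first decompose excursions according to the behavior of $X^0$ at the start of the excursion. A \emph{continuous-start} excursion has $\Delta X_{\tau_s} = 0$, and under Lambert's proof these correspond exactly to the Gaussian contribution $\beta n^\#$ in \eqref{LambertsIntensityForFutureMinimum}, giving the atomic contribution $\delta_0(dy)\beta n^\#(df)$ to $\mu^f$. A \emph{jump-start} excursion corresponds to a positive jump of $X^0$ of size $y$ at the start time, at which $X^0_{\tau_s-} = \dunderline{X}^0_{\tau_s-}$; the simultaneous jump of $\dunderline{X}^0$ has some size $u\in(0,y)$, and the initial value of the excursion is $x = y-u$. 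For this case I would apply the strong Markov property of $X^0$ at the start of the excursion together with the Williams-type decomposition of $\p^{\rightarrow}$ recalled in the preamble: writing the post-jump process as $(m+y) + Y$, where $m$ is the common pre-jump level and $Y$ is an independent $\p_0$-L\'evy process, one has $x = -\underline{Y}_\infty$, and the classical fact that $-\underline{Y}_\infty$ is exponential of parameter $b$ under $\p_0$ in the supercritical case gives $x$ the (unnormalized) density $b e^{-bx}\indi{0<x<y}\,dx$. The Williams decomposition additionally identifies the conditional law of the excursion above the future minimum given $x$ as $\q^\#_x$, via its pre-minimum part.

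The Poisson character of $\Xi^f$ then follows from that of the unmarked excursion point process by a marking argument: each mark $\Delta X_{\tau_s}$ depends only on the local behavior of $X^0$ at the start of its excursion, and the post-jump processes across distinct excursions are conditionally independent by the strong Markov property. Combining the intensity $\pi(dy)$ of jumps of $X^0$ with the conditional density and law derived above yields the joint intensity $\pi(dy)\otimes e^{-bx}\indi{0<x<y}\,dx\otimes \q^\#_x(df)$, with the constant factor $b$ absorbed into Lambert's normalization of the local time $\dunderline{L}$; marginalizing over $y$ via $\int_x^\infty \pi(dy) = \overline{\pi}(x)$ then recovers \eqref{LambertsIntensityForFutureMinimum} and confirms the normalization. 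The main obstacle I anticipate is the careful simultaneous bookkeeping of the three related jumps at a jump-start time---$\Delta X^0 = y$, $\Delta \dunderline{X}^0 = u$, and $\Delta(X^0-\dunderline{X}^0) = x = y-u$---and the matching of local-time normalizations between Lambert's lemma and the marked version.
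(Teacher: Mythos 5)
Your computational ingredients are the right ones (the exponential law of the overshoot, the conditional law $\q^\#_x$ of the excursion given the overshoot, and pinning the normalization by marginalizing back to Lambert's unmarked intensity \eqref{LambertsIntensityForFutureMinimum}), but there are two genuine gaps. First, the overshoot computation is carried out under the wrong measure. Under $\p^{\rightarrow}_0$ the Markov property says that, after a jump from level $m$ to $m+y$, the shifted process has law $\p_{m+y}$ \emph{conditioned to stay positive}; it is not an unconditioned $\p_0$-L\'evy process translated by $m+y$. Consequently the overshoot $x=(m+y)-\dunderline{X}_{\tau_{s-}}$ is an exponential of parameter $b$ conditioned to be smaller than $m+y$, with a level-dependent normalization $\paren{1-e^{-b(m+y)}}^{-1}$, and your clean density $be^{-bx}\indi{0<x<y}\,dx$ does not come out of the strong Markov property as stated. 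The paper gets the level-independent intensity by transferring the whole computation to $\p$ (using that $\p^{\rightarrow}$ is the post-minimum law) and applying the master formula to the Poisson point process of jumps of $X$, which simultaneously handles the fact that the event ``this jump time lies on the future-minimum set'' depends on the future of the jump time and is therefore not adapted; your sketch silently skips both of these points.

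Second, the ``marking argument'' for the Poisson property is not a marking argument in the usual sense: the mark $\Delta X_{\tau_s}$ is not an independent randomization of the unmarked atom, nor a functional of the excursion $(X-\dunderline{X})_{(\tau_{s-}+\cdot)\wedge\tau_s}$ alone --- it carries the extra information $\Delta\dunderline{X}_{\tau_{s-}}$ about the jump of the future minimum. To get independence of the marked excursions across distinct excursion intervals one must first show that $X-\dunderline{X}$ is regenerative with respect to the enlarged filtration $\G_t=\F^{X,\dunderline{X}}_t$ (which requires proving that $(X,\dunderline{X})$ is Markov under $\p^{\rightarrow}_x$), and then invoke a nested-array theorem to conclude that the enriched point process is Poisson with a $\sigma$-finite intensity determined up to a constant. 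This is the first half of the paper's proof and it cannot be waved away by citing the strong Markov property of $X^0$, since $\dunderline{X}$ is a functional of the future. Both gaps are repairable --- indeed the repaired argument is essentially the paper's proof --- but as written the proposal asserts the two hardest steps rather than proving them.
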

The proof will be presented at the end of this subsection. 
%Note, however, the following consequence: the random measure
%\begin{esn}
%\sum_{s: \Delta\tau_s\neq 0}
%\delta_{(s,\Delta X_{\tau_s}, \dunderline{X}_{\tau_s}-\dunderline{X}_{\tau_s -})}
%\end{esn}is a Poisson point process on $(0,\infty)^2$  with intensity
%\begin{esn}
%A\times B\mapsto \int \imf{\indi{A\times B}}{y,x} e^{-b x}  \indi{x\leq y}  \, dx\, \imf{\pi}{dy}
%\end{esn}

Let $\dunderline{g_t}$ and $\dunderline{d_t}$ stand for the beginnings and ends of the excursions of $X^0$ above its future minimum process
\begin{linenomath}
\begin{esn}
\dunderline{g_t}=\sup\set{s\leq t: \dunderline{X}^0_t=X^0_t}
\quad\text{and}\quad
\dunderline{d_t}=\inf\set{s> t: \dunderline{X}^0_t=X^0_t}. 
\end{esn}\end{linenomath}
We also deduce, by the approximation result of \eqref{localTimeLemmaFromDLG} applied at time $\dunderline{g_t}$, 
that
\begin{linenomath}
\begin{equation}
\imf{H}{X^0}_t=\dunderline{L}_t+\lim_{k\to\infty}\frac{1}{\eps_k}\int_{\dunderline{g}_t}^t \indi{X^0_s-\underline{X^0}_{[s,t]}<\eps_k}\, ds\quad\text{and}\quad
\imf{H}{X^0}_{g_t}=\dunderline{L}_{g_t}=\dunderline{L}_{t}. 
\label{heightProcessOfPostMinimum}
\end{equation}\end{linenomath}In any case, we see that $\imf{H}{X^0}\geq \dunderline{L}$ 
(actually $\dunderline{L}$ is the future minimum process of $\imf{H}{X^0}$) and so $\imf{H}{X^0}_t\to \infty$ as $t\to\infty$. We can also describe the excursions of $\imf{H}{X^0}$ above its future minimum process: on an excursion interval $(g_t,d_t)$, we note that $\imf{H}{X^0}_t-L_t$ is a continuous extension of $\imf{H^u}{X^0_{g_t+\cdot}}_{\cdot-g_t}$; in other words, it is the image under the height process of the excursion of $X^0$ above $\dunderline{X^0}$. 
To finish the construction of $G$, 
let $C^{i,n}$ be the time-change that removes what is above $n$ from $X^i$, 
say defined on $[0, T^n_i-T^n_{i-1}]$ (with $T^n_0=0$). 
Then, define $H^n=\imf{H}{X^0}$ on $[0,T^n_1]$ and, recursively, for $t\in [T^n_i,T^n_{i+1}]$
\begin{linenomath}
\begin{esn}
g^n_t=\sup\set{s\leq T^n_i: Y^n_s\leq \underline Y^n_{[T^n_i,t]}}\quad\text\and H^n_t=H^n_{g^n_t}+\imf{H}{X^i}\circ C^{i,n}_{t-T^n_i}. 
\end{esn}\end{linenomath}Arguing as in the proof of Proposition \ref{timechangedHeightProcessForReflectedLevyProcessProposition}, we note that $H^n$ is a continuous extension of $\imf{H^u}{Y^n}$ and that the sequence of trees $(G^n)$ coded by $(H^n)$ is consistent under pruning, so that $G$ can be built as a pointwise direct limit $(G^n)$. 

Let $\q_x^\#$ be the law image of $\p_x^\#$ by killing upon reaching zero. 
\begin{proposition}
\label{PoissonDescriptionForGenealogyOfSINTreeProposition}
The tree $G$ is a sin tree. 
%It has the same law as the random TOM tree with the following grafting construction. 
Let $\gamma^\#$ and $\gamma^\#_x$ be the \emph{laws} of the height process under $n^\#$ and $\q^\#_x$. 
Let $\Xi^1=\sum \delta_{(r_n^1,f^1_n)}$, $\Xi^2=\sum\delta_{(r_n^2,f^2_n)}$ and $\Xi=\sum\delta_{(r_n,f^l_n,f^r_n)}$ be Poisson point processes on $\excspace$, $\excspace$ and $\excspace^2$ with intensities $\nu^c,\nu^c$ and $\nu^d$ given by
\begin{linenomath}
\begin{align*}
\imf{\nu^c}{df}&= \beta \imf{\gamma^{\#}}{df}, 
%\quad
%\\\imf{\nu^2}{df}&= \imf{\gamma^{\#}}{df}\\
%\quad\text{and}\quad
\intertext{and}
\imf{\nu^d}{A\times B}&=\int e^{-b x}\indi{x\leq y} \imf{\gamma^\#_x}{A}\imf{\gamma^\#_{y-x}}{B} \, dx\, \imf{\pi}{dy}. 
\end{align*}
\end{linenomath}On the TOM tree $[0,\infty)$ rooted at zero, graft the trees coded by $f^1_n$ and $f^l_n$ to the left at heights $r^1_n$ and $r_n$ and graft the trees coded by $f^2_n$ and $f^r_n$ to the right at heights $r^2_n$ and $r_n$. 
The resulting TOM tree has the same law as $G$. 
\end{proposition}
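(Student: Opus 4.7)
The plan has three steps and addresses one main obstacle.

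First, I would verify that $G$ is a sin tree. Analogously to the construction of the sin chronological tree $S$ from the $X^i$'s, build $G$ as the pointwise direct limit of the compact trees $G^n$ coded by $H^n$. By \eqref{heightProcessOfPostMinimum}, the future minimum of $\imf{H}{X^0}$ equals the regenerative local time $\dunderline{L}$ of $X^0-\dunderline{X}^0$ at zero, which diverges as $t\to\infty$. This produces an infinite geodesic in $G$. Uniqueness mirrors the proof of Proposition \ref{uniquenessInfiniteLineOfDescentProposition} at the genealogical level: every element of $G$ lies in some $G^n$, and the recursive definition of $H^n$ shows that only the branch tracked by $\dunderline{L}$ escapes to infinity.

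Next I would analyze the left grafts. Apply Proposition \ref{characterizationOfExcursionsAboveFutureMinimumProposition} to identify the excursions of $X^0$ above $\dunderline{X}^0$, parametrized by $\dunderline{L}$, as a Poisson point process with intensity $\mu^f$. Each such excursion, mapped through the height process, gives a subtree of $G$ grafted to the left of the infinite line at height $\dunderline{L}$ evaluated at the excursion start. The continuous part $\delta_0(dy)\,\beta\,\imf{n^\#}{df}$ yields $\Xi^1$ with intensity $\beta\,\gamma^\#$, and the jump part yields the left marginal of $\Xi$ with intensity $\indi{0\leq x\leq y}\, e^{-bx}\,\gamma^\#_x\, dx\, \imf{\pi}{dy}$.

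Then I would analyze the right grafts and the coupling in $\Xi$. The right-hand side of the sin tree is coded by the concatenation of the $X^i$ for $i\geq 1$, which by the strong Markov property of $\Psi^\#$ constitutes a $\Psi^\#$-process started arbitrarily high and killed at zero. Each jump of $\dunderline{X}^0$ of size $y-x$, arising from a jump of $X^0$ of size $y$ with excursion start $x$, marks the lifetime of a newly born individual on the infinite line of descent. The right descendants of this individual are coded by the segment of the $\Psi^\#$-process that traverses its lifetime interval: by spatial homogeneity and the strong Markov property this is a $\q^\#_{y-x}$-piece, whose height process has law $\gamma^\#_{y-x}$. This produces the paired atoms of $\Xi$. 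The Gaussian component contributes independently to left and right, producing the independent Poisson processes $\Xi^1$ and $\Xi^2$ each with intensity $\beta\,\gamma^\#$.

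The main technical obstacle is the rigorous justification of the right-side coupling. Although $X^0$ and the $X^i$ for $i\geq 1$ are built independently, the paired atoms of $\Xi$ encode the joint law of a $\Psi^\#$ segment whose random length is determined by $X^0$'s jump structure. A clean strategy is to work under the truncated measure $\Upsilon^r$, where the contour is an explicit concatenation of $X^\rightarrow$ and an independent $\Psi^\#$-process, compute the joint Poisson intensity of left and right grafts via It\^o's excursion theory applied simultaneously, and then pass to the direct limit.
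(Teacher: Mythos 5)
Your overall route coincides with the paper's: build $G$ as the direct limit of the trees coded by the $H^n$, identify the left grafts with the excursions of $X^0$ above $\dunderline{X}^0$ via Proposition \ref{characterizationOfExcursionsAboveFutureMinimumProposition}, and obtain the paired atoms of $\Xi$ from the jumps of $\dunderline{X}^0$, whose gaps of size $y-x$ are traversed by a $\q^\#_{y-x}$-piece of the descending $\Psi^\#$-process. That part of your argument, including the case split between continuous and discontinuous excursion starts (binary versus infinite branch points), is sound and matches the paper.

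The genuine gap is in your last sentence of the third step: you assert that the Gaussian component ``produces the independent Poisson process $\Xi^2$ with intensity $\beta\gamma^\#$'' without any argument, and this is precisely the hardest point of the proof. The left grafts are naturally indexed by $\dunderline{L}$, the local time of $X^0-\dunderline{X}^0$ at zero, which \emph{is} the genealogical height along the infinite line of descent; so the $\delta_0(dy)\,\beta\,\imf{n^\#}{df}$ term of $\mu^f$ transfers directly to $\beta\gamma^\#$. But the right grafts at binary branch points come from the continuous excursions of the $X^i$ ($i\geq 1$) above their running infimum, and these form a Poisson process in the local time scale $-\underline{X}^i$, i.e.\ per unit of \emph{chronological} height descended, with rate $n^\#$ --- not per unit of $\dunderline{L}$. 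To re-express them along the infinite line of descent of $G$ one must show that the set of chronological heights available for such grafts, namely the closed range of $\dunderline{X}^0$, satisfies $\imf{\leb}{\overline{\set{\dunderline{X}^0_s : s\geq 0}}\cap[0,\dunderline{\tau}_t]}=\beta t$. The paper gets this from the duality of $(X^0-\dunderline{X}^0,\dunderline{X}^0)$ with the reflected-at-the-supremum process (Lemme 4 of \cite{MR1141246}), the invariance of the Duquesne--Le Gall local time normalization under time reversal, the identification of the drift coefficient $\beta$ of the ladder height process (Lemma 1.1.2 of \cite{MR1954248}), and Proposition 1.8 of \cite{MR1746300}. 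Your proposed workaround (work under $\Upsilon^r$ and apply It\^o excursion theory jointly, then pass to the limit) does not resolve this: under $\Upsilon^r$ the right-hand contour is still indexed by height, so the same reparametrization from height to $\dunderline{L}$, and hence the same appearance of the factor $\beta$, must be justified. Note also that when $\beta=0$ this says there are \emph{no} right-hand binary grafts, which your phrasing does not make evident.
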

\begin{proof}
The reader is asked to recall the proof of Proposition \ref{uniquenessInfiniteLineOfDescentProposition}. 
During that proof, we identified trees grafted to the left of the infinite line of descent of $S$ as excursions of $X^0$ above its future minimum process as well as trees grafted to the right as excursions above the cumulative minimum process of $X^1,X^2,\ldots$. 
The grafting heights are the heights in each $X^i$ at which the corresponding excursion ends. 
A similar analysis is valid for $G$ except that we use excursions of the height processes involved. 
Note first that the future minimum process of $\imf{H}{X^0}$ is $\imf{\dunderline{L}}{X^0}$ as follows from \eqref{heightProcessOfPostMinimum}.  
Since the left-hand side of the infinite line of descent $G$ can be coded by $\imf{H}{X^0}$, 
then trees grafted to the left of the infinite line of descent of $G$ are coded by excursions of $\imf{H}{X^0}$ above $\imf{\dunderline{L}}{X^0}$. 
Let $(g,d)$ be an excursion interval of $X^0$ above its future minimum process. 
%Since both $g$ and $d$ are upward excursion times, 
Since upward times are dense 
(recall the discussion after \eqref{localTimeLemmaFromDLG}), 
and $g$ is one of them, 
we can use the approximation \eqref{localTimeLemmaFromDLG} 
at any rational $u>d$ and continuity of the height process and deduce that $\imf{H}{X^0}_g=\imf{H}{X^0}_d$. 
Now the analysis breaks down into two cases: 
when $X^0_{g-}=X^0_g$ 
(or in other words, when the excursion starts continuously for $X^0$) or when $X^0_{g-}<X^0_g$. 
In the former case, note that $\imf{H}{X^0}>\imf{H}{X^0}_g$ on $(g,d)$ 
(by support properties of local times) 
so that $H$  on $[g,d]$ codes  a subtree grafted to the left of the infinite line of descent 
and $d$ and $g$ correspond in $G$ to a binary branchpoint 
(upon removal, it disconnects the tree into $3$ components), 
while in the latter case, 
we have $\imf{H}{X^0}_t=\imf{H}{X^0}_g$ for $t\in (g,d)$ if and only if $X^0_{t-}= \underline X^0_{[g,t]}$. 
Note then that all such $t$ correspond to the same point on $G$ and the (sub)excursion interval 
codes a tree grafted to the left of the infinite line of descent. 
Hence, the element of $G$ corresponding to them is an infinite branch point 
(upon removal, it disconnects the tree into an infinite number of components). 
To find subtrees to the right of the infinite line of descent, 
the analysis is also divided between those corresponding to infinite branch points 
and those corresponding to binary branch points. 
The former are constructed as follows: 
consider the (vertical) interval $I=(\dunderline{X}^0_{g-},\dunderline{X}^0_{d})\cap [n-1,n]$ for $n\geq 1$. 
If $(g',d')$ is an excursion of $X^n$ above its cumulative minimum and $X^n_{d'}\in I$ then, 
by definition, 
$H^n_{T^m_n+t}=H_{g-}=\dunderline{L}_{g-}$ for all $m\geq n$ 
and $t\in [g',d']$ such that $X^n_{t-}=\underline X^n_{t}$, in particular $g'$ or $d'$. 
Again, the element of $G$ corresponding to all such $t$ is an infinite branch point. 
The binary branch points are constructed from excursions of $X^i$ 
above their cumulative minimum process, 
say on the excursion interval $(g',d')$ where $X^n_{d'}$ 
does not belong to the jump intervals $(\dunderline{X}^0_{g-},\dunderline{X}^0_d)$. 

Let us now see at which heights the compact trees are grafted to the left of the infinite line of descent of $G$. 
%For this, we recall that Lemma 9 of \cite{MR1883717} gives us a Poissonian construction of $X^0-\dunderline{X}^0$: its excursion measure is $\beta n^\#+\int_0^\infty e^{-bx}\imf{\overline \pi}{x}\, \q^\#_x\, dx$. 
Since the height along the infinite line of descent equals the local time of $X^0-\dunderline{X^0}$, (since $H=L$ at ends of excursions), 
then an excursion of $X^0$ on $[g,d]$ gives rise to a tree grafted to the left of the infinite line of descent of $G$ at height $L_g$. 
If $X^0_g>X^0_{g-}$,  then we must graft a tree at the same height at the right of the infinite line of descent; the tree is coded, using the same notation as before, by $H^n_{T^m_n+\cdot}$ on $[g',d']$ for large enough $m$. 
We see that the left of the infinite line of descent can be given a Poissonian construction as follows, 
thanks to Proposition \ref{characterizationOfExcursionsAboveFutureMinimumProposition}: 
along $[0,\infty)$ (viewed as a vertical locally compact TOM tree), 
graft trees to the left with intensity 
$\beta\gamma^\#+\int_0^\infty e^{-bx}\imf{\overline \pi}{x}\, \gamma^\#_x\, dx$. 
The intensity with density $x\mapsto e^{-bx}\imf{\overline \pi}{x}$ corresponds to the sizes of overshoots 
$\Delta \paren{X^0-\dunderline{X}^0}$ above the future minimum process. 
However, to the trees with law $\gamma_x^\#$, 
which correspond to the overshoot of $X^0$ when the future minimum jumps, 
we must add the corresponding trees to the right of the infinite line of descent but at the same height. 
If we want to capture not only the overshoot but also the complete  size of the jump $\Delta X^0$ 
at each jump over the future minimum, 
then the intensity becomes $(x,y)\mapsto e^{-b x}\indi{x\leq y}  \, dx\, \imf{\pi}{dy}$ thanks to Proposition \ref{characterizationOfExcursionsAboveFutureMinimumProposition}. 
With the trees that get grafted, we obtain the intensity $\nu^d$ of the statement. 
Finally, to the right of the infinite line of descent we also have trees 
which come from the continuous excursions of the $X^i$ ($i\geq 1$) 
above its cumulative minimum processes. 
In their natural local time scale, these arrive at rate $n^\#$. 
We now prove that in the time scale of $\dunderline{L}$, 
the intensity is actually $\beta \gamma^\#$, which concludes the proof of the theorem. 
Let $\dunderline{\tau}$ be the right-continuous inverse of $\dunderline{L}$. 
We recall that binary branch points along the right of the infinite line of descent are coded by $\imf{H}{X^n}$  on excursion intervals $(g,d)$ of $X^n-\underline X^n$ 
where $X^n_d$ belongs to the range of $\dunderline{X}^0$. 
To examine the latter, recall that Lemme 4 in \cite{MR1141246} tells us 
that the joint law of $(X^0-\dunderline X^0, \dunderline X^0)$ is the same as that of 
$(\mc{R}(\overline X-X), \overline X\circ d)$ under $\p$, 
where $\mc{R}(\overline X-X)$ is a process obtained by reversing the arrow of time 
of each excursion of $\overline X-X$ and $d_t$ is the right endpoint 
of the excursion of $\overline X-X$ straddling time $t$. 
However, local times in the Duquesne-Le Gall normalization of \eqref{DuquesneLeGallNormalization} 
are invariant under time-reversal, 
so that the joint law of $(\dunderline{L}, \dunderline{X}^0)$ coincides with that of $(L,X\circ d)$ under $\p$. 
Finally, noting that composition is measurable as in \cite{MR561155} or \cite{MR1876437} 
and using the equality $d\circ L^{-1}=L^{-1}$, we see that the law of $\dunderline{L}$ 
is the same as that of the ladder height process $X\circ L^{-1}$. 
Lemma 1.1.2 in \cite{MR1954248} tells us that $X\circ L^{-1}$ has drift coefficient $\beta$ 
(in the particular normalization of local time), and so Proposition 1.8 of \cite[p.13]{MR1746300} 
tells us that
\begin{linenomath}
\begin{esn}
\beta t=\imf{\leb}{\set{ \dunderline{X}_s: s\geq 0}\cap [0,\dunderline{\tau}_t]}. 
\end{esn}\end{linenomath}Hence, we deduce that trees to the right of the infinite line of descent of $G$ 
that are rooted at binary branch points are still a Poisson point process with intensity $\beta \gamma^\#$. 
%Putting this information together, we have proved the proposition. 
\end{proof}
We now turn to the Ray-Knight theorem associated to the tree $G$. 
Recall that $G$ is the genealogical tree associated to the tree $S$ with law $\Upsilon$. 
Recall also that $\Upsilon$ was constructed out of $\Psi$, that $\beta$ is the Gaussian coefficient in $\Psi$, 
$\pi$ is its L\'evy measure and $b$ is its greatest root. 
The reader may consult \cite{MR2485021} 
for Ray-Knight type theorems of sin trees featuring more general CBI processes. 
\begin{proposition}
Suppose that $G=\paren{(\tau,d,\rho),\leq,\mu}$ and let $\imf{\delta}{\sigma}=\imf{d}{\rho,\sigma}$. 
Then, the random measure $\Xi=\mu\circ\delta^{-1}$ 
%\begin{linenomath}\begin{esn}
%\fund{\rkmes}{A}{\imf{\mu}{\set{\sigma\in\tau: \imf{d}{\rho,\sigma}}\in A}}
%\end{esn}\end{linenomath}
admits a \cadlag\ density $Z$. 
The process $Z$ is a $\cbi$ process with subcritical branching mechanism $\Psi^\#$ 
and immigration mechanism $\Phi$ given by\begin{lesn}
\imf{\Phi}{\lambda}
=2\beta\lambda+ \int_0^\infty (1-e^{-\lambda x})\frac{1-e^{-bx}}{b}\, \imf{\pi}{dx}
=\frac{\imf{\Psi}{\lambda+b}-\imf{\Psi}{\lambda}}{b}
. 
\end{lesn}
\end{proposition}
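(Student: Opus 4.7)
The plan is to combine the Poissonian description of $G$ from Proposition \ref{PoissonDescriptionForGenealogyOfSINTreeProposition} with the classical Ray-Knight theorem for subcritical L\'evy trees of \cite{MR1954248}. For the $\Psi^{\#}$-height processes $f$ that appear in that description, the occupation measure of $f$ admits a \cadlag\ density $s\mapsto Z^f_s$, namely the Duquesne--Le Gall local time at level $s$; under $\gamma^{\#}$ this density is a $\cb(\Psi^{\#})$ excursion, while under $\gamma^{\#}_x$ it starts at $x$, evolves as $\cb(\Psi^{\#})$ and is killed at $0$. Writing $u_s(\lambda)$ for the solution of $\partial_s u = -\imf{\Psi^{\#}}{u}$ with $u_0=\lambda$, the standard CSBP formulas read
\begin{linenomath}
\begin{esn}
\imf{\gamma^{\#}}{1-e^{-\lambda Z^f_s}} = \imf{u_s}{\lambda}
\quad\text{and}\quad
\imf{\se_{\gamma^{\#}_x}}{e^{-\lambda Z^f_s}} = e^{-x\imf{u_s}{\lambda}}.
\end{esn}
\end{linenomath}

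Next I would observe that the infinite backbone of $G$ carries no $\mu$-mass, because it is coded by the Lebesgue-null set of times at which $H^n$ attains its future infimum. The grafting description in Proposition \ref{PoissonDescriptionForGenealogyOfSINTreeProposition} then yields, pathwise,
\begin{linenomath}
\begin{esn}
Z_a = \sum_{n:\, r^1_n\leq a} Z^{f^1_n}_{a-r^1_n} + \sum_{n:\, r^2_n\leq a} Z^{f^2_n}_{a-r^2_n} + \sum_{n:\, r_n\leq a} \bra{Z^{f^l_n}_{a-r_n} + Z^{f^r_n}_{a-r_n}},
\end{esn}
\end{linenomath}which is \cadlag\ and locally finite thanks to local compactness of $G$. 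Applying the exponential formula for Poisson point processes, the two $\beta\gamma^{\#}$-contributions combine to $\exp\paren{-2\beta\int_0^a \imf{u_s}{\lambda}\,\d s}$; for the $\nu^d$-part, the independence of $(f^l_n,f^r_n)$ together with the second identity above makes the integrand $1-e^{-x\imf{u_{a-r}}{\lambda}}e^{-(y-x)\imf{u_{a-r}}{\lambda}} = 1-e^{-y\imf{u_{a-r}}{\lambda}}$ independent of $x$, so the $dx$-integral collapses to a factor $(1-e^{-by})/b$. Summing the three contributions gives
\begin{linenomath}
\begin{esn}
\imf{\se}{e^{-\lambda Z_a}} = \exp\paren{-\int_0^a \imf{\Phi}{\imf{u_s}{\lambda}}\,\d s},
\end{esn}
\end{linenomath}which is exactly the Laplace transform at time $a$ of a $\imf{\cbi}{\Psi^{\#},\Phi}$ started at $Z_0=0$. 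The identity $\imf{\Phi}{\lambda}=(\imf{\Psi}{\lambda+b}-\imf{\Psi}{\lambda})/b$ is an algebraic consequence of $\imf{\Psi}{b}=0$.

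To upgrade this marginal identity into the full CBI assertion, I would extend the computation to finite-dimensional distributions by conditioning on the state at an intermediate height $a$: the branching property of $\cb(\Psi^{\#})$ applied to the height-$\leq a$ subtrees shows that their joint contribution above $a$ is a $\cb(\Psi^{\#})$ started at $Z_a$, independent of the subtrees grafted above $a$ (which give an independent copy of the same Poissonian construction). The main obstacle will be the rigorous identification of $Z_a$ as the pointwise sum of the local times $Z^f_\cdot$: one must return to the direct-limit construction of $G$, decompose $\imf{H}{X^0}$ along excursions above its future infimum as already recorded in \eqref{heightProcessOfPostMinimum}, and transfer the Duquesne--Le Gall local-time identities from each such excursion and from each $\imf{H}{X^i}$ $(i\geq 1)$ into their contributions to the projected measure $\mu\circ\delta^{-1}$.
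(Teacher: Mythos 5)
Your proposal follows the same skeleton as the paper's proof: both rest on the Poissonian grafting description of $G$ (Proposition \ref{PoissonDescriptionForGenealogyOfSINTreeProposition}), the fact that the infinite line of descent is $\mu$-null, and the Duquesne--Le Gall Ray--Knight theorems under $n^{\#}$ and $\q_x^{\#}$ applied to each grafted compact piece. Where you diverge is in the final identification. The paper first collapses the bivariate intensity $\nu^d$ at the level of \emph{trees}, observing that concatenating trees with laws $\gamma^{\#}_x$ and $\gamma^{\#}_{y-x}$ produces a tree with law $\gamma^{\#}_y$ (equivalently, summing the two independent $\cb(\Psi^{\#})$ densities gives one started at $y$), so that the grafting intensity becomes exactly $\beta Q+\int_0^\infty \frac{1-e^{-by}}{b}P_y\,\imf{\pi}{dy}$; it then quotes the known spinal decomposition of $\imf{\cbi}{\Psi^{\#},\Phi}$ to conclude the full process-level statement in one stroke. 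You instead compute the one-dimensional Laplace transform by the exponential formula (your collapse $1-e^{-xu}e^{-(y-x)u}=1-e^{-yu}$ followed by $\int_0^y e^{-bx}\,dx=(1-e^{-by})/b$ is the analytic shadow of the paper's concatenation remark) and then upgrade to finite-dimensional distributions via the branching property. Both routes are sound; the paper's buys the Markov/CBI structure for free from the spine representation, while yours is more self-contained but must separately argue the conditional independence across levels. One caveat: your justification that the backbone is $\mu$-null ("coded by the Lebesgue-null set of times at which $H^n$ attains its future infimum") is asserted rather than proved; the paper devotes a genuine argument to this, splitting into the cases $\beta>0$ and $\beta=0$ and using that the relevant ladder-time subordinators have zero drift (and, when $\beta=0$, that the range of $\dunderline{X}^0$ has zero Lebesgue measure). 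That step is not automatic and should not be waved through.
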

The main tools in the proof are the Ray-Knight theorems under $\eta^{\#}$ 
as well  the spinal decomposition of $\cbi$ processes that we now briefly recall. 
For the details of spinal depompositions, 
the reader can consult \cite[Sect. 2.4]{2012arXiv1202.3223L} in full generality 
or \cite{MR2862645} under Grey's condition,  
as well as the streamlined exposition in \cite[Sect. 4]{MR3263091}.  
For details regarding the Ray-Knight theorems, we refer the reader to \cite{MR1954248} and \cite{MR2147221}. 
With $\Psi^\#$ and $\Psi$ as in the statement, 
let $P_x$ be the law of a $\imf{\cb}{\Psi^\#}$ 
(continuous-state branching process with branching mechanism $\Psi^\#$) that starts at $x$. 
It is then known that there exists a measure $Q$ 
(the Kuznetsov measure of $\Psi^\#$) on $\excspace$ 
such that if $\Xi=\sum_{n}\delta_{(t_n,f_n)}$ is a Poisson point process 
with intensity $\beta Q+\int_0^\infty \frac{1-e^{-bx}}{b} P_x\,\imf{\pi}{dx}$ 
then the process $Z$ given by $Z_t=\sum_{t_n\leq t} \imf{f_n}{t-t_n}$ 
is a $\imf{\cbi}{\Psi^\#,\Phi}$ 
(a continuous state branching process with immigration 
with branching mechanism $\Psi^\#$ and immigration mechanism $\Phi$). 
The law $Q$, called the Kuznetsov measure of $P_x$, is Markovian and admits same semigroup as $P_x$. 
On the other hand, the Ray-Knight theorem states that under $n^\#$ or under $\q_x^\#$, 
the random measure $A\mapsto \imf{\leb}{\set{t\in (0,\zeta)}: H_t\in A}$ 
admits a \cadlag\ density $Z$ which has law $Q$ or $P_x$. 
For the case of $\q_x^\#$, this is the content of Theorem 1.4.1 in \cite{MR1954248}. 
We were unable to find the case of $n^\#$ reported in the literature. 
However, a quick proof of it can be given by the fact that 
$\imf{n^\#}{1-e^{-\lambda t}}=-\log P_1 (e^{-\lambda X_t})$ 
(by the proof of Theorem 1.4.1 in \cite{MR1954248}) 
and this equals $\imf{Q}{1-e^{-\lambda X_t}}$ (as in equation (2) in \cite{MR2862645}). 
On the other hand, both measures are Markovian and have the same semigroup as $(P_x)$; in the case of $Q$ this follows by equation (2) in \cite{MR2862645} while for $n^\#$, this follows from the regenerative property  (of the tree coded by $H$) and the Ray-Knight theorem under $\q_x^\#$. 
\begin{proof}
Let $L$ be the unique infinite line of descent of $G$. 
We first show that $\imf{\mu}{L}=0$. 
	Indeed, consider first $X^0$ and its future infimum process $\dunderline{X}^0$. 
	Since the set
	\begin{linenomath}
	\begin{esn}
	\mc{L}^0=\set{t\geq 0: X^0_t=\dunderline{X}^0_t}
	\end{esn}\end{linenomath}has the same law as 
	$\set{t\geq 0: X_t=\overline X_t}$ under $\p$, 
	as recalled in the proof of Proposition \ref{PoissonDescriptionForGenealogyOfSINTreeProposition}, 
	we see that $\imf{\leb}{\mc{L}^0}=0$ since, 
	as noted in the proof of Proposition 7 in \cite{splitting1}, 
	the upward ladder time process under $\p$ has zero drift. 
	On the other hand, for each $n\geq 1$, the sets
	\begin{linenomath}
	\begin{esn}
	\mc{L}^n=\set{t\geq 0: X^n_t=\underline X^n_t}
	\end{esn}have measure zero whenever 
	the inverse of $\underline X$ under $\p$ has zero drift. 
	Since the Laplace exponent of the latter
	equals to the right continuous inverse of $\Psi$, 
	we see that it has no drift whenever $\beta>0$ 
	since this implies $\imf{\Psi}{\lambda}\sim \beta\lambda^2$ as $\lambda\to\infty$. 
	Finally, when $\beta=0$, the set 
	$\mc{R}=\set{\dunderline{X}^0\circ \dunderline{L}_t:t\geq 0}$
	has zero Lebesgue measure, as shown in the proof of Proposition \ref{PoissonDescriptionForGenealogyOfSINTreeProposition}. 
	Hence, 
	the set $\set{t\geq 0: X^n_t=\underline X^n_t\in \mc{R}}$ has zero Lebesgue measure. 
	Under the mapping sending $t$ to its equivalence under $\sim_{Y^n}$, 
	the sets we have considered are projected into the infinite line of descent, 
	which therefore has zero measure under $\mu$. 
	\end{linenomath}

Suppose that the compact trees grafted to the left and to the right of the infinite line of descent are enumerated as $(t_i,\tau_i)$, where $t_i$ is the distance from $\rho$ to the root $\rho_i$ of $\tau_i$. 
Then
\begin{lesn}
\imf{\rkmes}{A}
=\imf{\mu}{\set{\sigma\in L:\imf{d}{\rho,\sigma}\in A}}
+\sum_i \imf{\mu}{\set{\sigma\in \tau_i:\imf{d}{\rho_i,\sigma}\in A-t_i}}.
\end{lesn}As we have just seen, the first summand is zero. 
For the second sum, call each summand $\imf{\rkmes_i}{A}$. 
Thanks to the Ray-Knight theorem under $n^\#$ and under $\q_x$, 
let $Z^i$ be a density for the measure $\rkmes_i$, which has the semigroup of a  $\cb$ processes with branching mechanism $\Psi^\#$. 
Then
\begin{lesn}
\imf{\rkmes_i}{A}=\int_{A} Z^i_{t-t_i}\, dt
\end{lesn}and so $\Xi$ is absolutely continuous with respect to Lebesgue measure and a version of its density is $Z=\sum_i Z^i_{\cdot-t_i}$. 
Note that this is independent of the side of the infinite line of descent to which the trees $\tau_i$ are grafted. 
Since the concatenation of two processes with laws $\gamma_x^\#$ and $\gamma_y^\#$ has law $\gamma_{x+y}^\#$, then the image of $\nu$ in Proposition \ref{PoissonDescriptionForGenealogyOfSINTreeProposition} under the concatenation of both trajectories equals $\int_0^\infty (1-e^{-by})/b\, \gamma^\#_y\, \imf{\pi}{dy}$.
Thanks to the spine representation and the Poisson construction of $G$ in Proposition \ref{PoissonDescriptionForGenealogyOfSINTreeProposition}, 
we see that $Z$ is a $\cbi$ process with branching mechanism $\Psi^\#$ and the immigration mechanism $\Phi$ as stated. 
The stated relationship between $\Psi$ and $\Phi$ can be checked by computation. 
\end{proof}

A further consequence of the spinal decomposition of $\imf{\cbi}{\Psi^\#,\Phi}$ started at zero is the following.
At any time $t>0$, the post-$t$ evolution is decomposed into two parts: 
that corresponding to $f_n(t-t_n)$ for $t_n\leq t$ and that corresponding to what attaches to the spine above $t$. 
If $Z_t=x$, then the first part evolves as $\imf{\cb}{\Psi^\#}$ started at $x$ (thanks to the Markovian character of $Q$ and the branching property). Furthermore, the second contribution evolves as an independent $\imf{\cbi}{\Psi^\#, \Phi}$ started at zero. 
This remark be important to the proof of Theorem \ref{RayKnightTheorem}. 

We finally present the pending proof of this subsection. 

\begin{proof}[Proof of Proposition \ref{characterizationOfExcursionsAboveFutureMinimumProposition}]
% Primero: comentamos sobre el car\'acter regenerativo. 
We first comment on the regenerative character of $\mc{Z}$ 
in a way that handles the jump of $X$ when $\dunderline{X}$ also jumps. 
First, consider $\G_t=\F^{X,\dunderline{X}}_t$ 
and note that it coincides with $\F^X_t\vee \sag{\dunderline X_t}$ 
due to the equality $\dunderline{X}_s=\dunderline{X}_t\wedge \underline{X}_{[s,t]}$ valid whenever $s\leq t$. 
We first assert that $(X,\dunderline{X})$ is a Markov process under any $\p_x^{\rightarrow}$. 
Indeed, note that for any $A\in\F^X_t$, the Markov property and the definition of $\p_{x,y}^\rightarrow$ give
\begin{linenomath}
\begin{align*}
\imf{\se^{\rightarrow}_x}{\indi{A, \dunderline{X}_t\in B}\imf{g}{X_{t+s},\dunderline X_{t+s}}}
&=\imf{\se^{\rightarrow}_x}{\indi{A, \dunderline{X}_t\in B} \imf{h}{X_t,\dunderline{X}_t}}
\intertext{where}
\imf{h}{x,y}&=\imf{\se_{x,y}^{\rightarrow}}{\imf{g}{X_s,\dunderline{X}_s}}. 
\end{align*}\end{linenomath}This gives us the Markovian character of $(X,\dunderline X)$; 
by weak continuity of $\p_{x,y}$, it is even a Feller process. 
We now assert that $X-\dunderline{X}$ is Markovian with respect to the filtration $(\G_t)$. 
Indeed, note that the image of $\p^{\rightarrow}_{x,y}$ under the mapping 
$f\mapsto f-y$ is $\p^\rightarrow_{x-y,0}$, 
as follows from its definition and the spatial homogeneity of L\'evy processes. 
It follows that\begin{lesn}
\imf{\se_{x,y}^\rightarrow}{\imf{g}{X_s-\dunderline X_s}}
=\imf{\se_{x-y,0}^\rightarrow}{\imf{g}{X_s-\dunderline X_s}}, 
\end{lesn}so that
\begin{lesn}
\imf{\se^{\rightarrow}_x}{\indi{A,\dunderline{X}_t\in B}\imf{g}{X_{t+s}-\dunderline X_{t+s}}}
=\imf{\se^{\rightarrow}_x}{\indi{A,\dunderline{X}_t\in B}\imf{h}{X_t-\dunderline X_t}}
\end{lesn}with $\imf{h}{x-y}=\imf{\se_{x-y,0}^{\rightarrow}}{\imf{g}{X_s-\dunderline{X}_s}}$. 
Hence, $X-\dunderline{X}$ is Markovian (and indeed Feller) under $\p_x^\rightarrow$ with respect to the filtration $(\G_t)$. 
In conclusion, $\mc{Z}$ is regenerative with respect to the filtration $(\G_t)$. 
It follows that $\Xi^f$ is a Poisson point process. 
Indeed, let $\eps>0$ and, starting with $T_0=d_{0}=0$, let $T_{n+1}=\inf\set{t\geq d_{n}: X_t-\dunderline X_t\geq \eps}$,  $d_{n+1}=\inf\set{t\geq T_{n+1}:t\in \mc{Z}}$ and $g_{n+1}=\sup\set{t\leq T_{n+1}: t\in \mc{Z}}$. 
Then $T_n$ and $d_{n}$ are stopping times with respect to the filtration $(\G_t)$. 
Because of the strong Markov property applied at times $d_{n}$, we see that $\paren{X-\dunderline{X}}_{T_{n+1}+\cdot}$ is independent of $\G_{d_{n}}$ and in particular of $Y^n={X-\dunderline X}_{\paren{g_{m}+\cdot}\wedge d_{m}}$ and of $\Delta X_{g_{m}-}$ for any $m\leq n$.  
Therefore, the sequence $(\Delta X_{g_{T_m}-}, Y^n)$ is iid. 
When varying $\eps$, the sequences $(\Delta X_{g_{T_m}-}, Y^n)$ conform a nested array 
as introduced in \cite{MR579823}; 
the main result in that paper allows us to deduce that $\Xi^f$ is a Poisson point process, whose intensity we now compute. Note, however, that we can write its intensity $\tilde \mu^f$ as $\imf{\delta_0}{dy}\imf{\tilde n}{df}+\tilde \mu^{f,d}$, where $\tilde \mu^{f,d}$ is the restriction of $\tilde \mu^f$ to excursions which start with at a non-zero value.

% Ahora calculamos la ley de los saltos sobre el m\'inimo futuro.

We first need the following fact. 
Almost surely: 
a time $t>0$ is the beginning of a discontinuous excursion of $X-\dunderline X$ 
if and only if 
$t$ is a time of a common jump of $X$ and $\dunderline X$. 
At any such time $t$, 
we have the inequalities $\Delta X_t>\Delta \dunderline X_t>0$. 
Indeed, if $t$ is the beginning of a discontinuous excursion, 
then by definition we get that $0<\Delta (X-\dunderline X)=\Delta X_t-\Delta\dunderline{X}_t$ 
and $X_{t-}-\dunderline X_{t-}=0$. 
Since $\dunderline{X}$ is non-decreasing, 
then $\Delta X_t>0$. 
However, by ennumerating jumps of $X$ of size $>\eps$ for any $\eps>0$ 
and applying the strong Markov property and the absolute continuity of the law of the minimum of $X$, 
we see that $X_{t-}\neq \dunderline X_t$ and $\dunderline X_t<X_t$ at any jump time of $X$. 
Hence, we deduce that $X_{t-}=\dunderline X_t<\dunderline X_t<X_t$ which implies that $t$ is a common jump of $X$ and $\dunderline X$ and indeed the inequalities $0<\Delta{\dunderline X}_t<\Delta X_t$. 
On the other hand, if $t$ is a common jump of $X$ and $\dunderline X$ then $\dunderline X_{t}>\dunderline X_{t-}$ which implies that $X_{t-}=\dunderline X_{t-}$, so that $\paren{X-\dunderline{X}}_{t-}=0$. 
As we have remarked, since $t$ is a jump time of $X$ we then get the inequalities $X_{t-}<\dunderline{X}_t<X_t$ so that $t$ is a jump time of $X-\dunderline{X}$ and the beginning of a discontinuous excursion.  We have also obtained the inequality $\Delta{\dunderline X}_t<\Delta X_t$.

%Note that any jump time of $X-\dunderline X$ is also a jump time of $X$. 
%Therefore, any discontinuous excursion of $X-\dunderline X$ starts at a jump time of $X$. 
We will now construct a nested array of discontinuous excursions. 
For any $\eps>0$, 
let $T_n$ be the time of the $n$-th jump jump of $X$ of size greater than $\eps$ 
that is common to $X$ and  $\dunderline X$ and let $\rho_n=\inf\set{t\geq T_n: X_t=\dunderline{X}_t}$. 
Note that both $T_n$ and $\rho_n$ are stopping times with respect to the filtration $\paren{\G_t}$. 
Define\begin{lesn}
V_n=\imf{\Delta X}{T_n},\quad O_n=X_{T_n}-\dunderline{X}_{T_n}\quad\text{and}\quad F_n=X_{(T_n+\cdot)\wedge \rho_n}-\dunderline{X}_{T_n}. 
\end{lesn}Note that $O_n=F_n(0)$. 
Because of the strong Markov property, 
the random variables $\set{(V_n,O_n,F_n)}$ are independent and identically distributed, with a law depending on $\eps$. Note that as we vary $\eps$, we get a nested array that exhausts the discontinuous excursions of $X$ by the preceeding paragraph. 
%, meaning that if we consider the sequences associated to $\eps'<\eps$, the one associated to $\eps$ is the subsequence of the one associated to $\eps'$ where the second coordinate is greater than $\eps$. 
The main theorem in \cite{MR579823} implies the existence of a $\sigma$-finite measure $\tilde \nu^f$ such that the law of $(V_1,O_1,F_1)$ is $\tilde \nu^f$ conditioned on $\re_+\times (\eps,\infty)\times E$. 
In fact, all measures satisfying this conditional property differ by a constant factor. 
Also, the conditional law of $F_1$ given $(V_1,O_1)=(y,x)$ is the image of $\q_{y-x}^\#$ under the mapping $f\mapsto x+f$. 
We now compute the law of $(V_1,O_1)$ and show that
\begin{lesn}
\proba{V_1\in dy, O_1\in dx, F_1\in df}=\frac{\indi{\eps\leq y} be^{-bx}\, \imf{\q^{\#}_x}{df}\, dx\, \imf{\pi}{dy}}{\int_\eps^\infty (1-e^{-by})\, \imf{\pi}{dy}}.
\end{lesn}If we let  $\tilde \mu^f$ be the image of $\tilde \nu^f$ by the map $(y,x,f)\mapsto (y,f)$, then our construction implies the existence of a constant $c$ such that $\tilde \mu^f$ equals $c \mu^f$ on discontinuous excursions and we will then argue that $c=1$.  

Let us compute the law of $(V_1,O_1)$. 
Since $\p^\rightarrow$ is the law of the post-minimum process under $\p$, it suffices to do the computation using the latter law. 
Let $S_1,S_2,\ldots$ be the succesive jumps of $X$ of size greater than $\eps$ and consider two Borel sets $B_1,B_2$ of $(\eps,\infty)$ and $(0,\infty)$. 
%Let $\p_\eps$ equal $\p$ conditioned on $(\eps,\infty)$. 
Then, 
using the Strong Markov property, 
the law of the overall minimum under $\p_x$,  
as well as the master formula of Poisson point processes, 
we obtain
\begin{linenomath}
\begin{align*}
&\proba{\Delta X_{T_1}\in B_1, X_{T_1}- \dunderline X_{T_1}\in B_2}
\\&=\sum_k\proba{S_k=T_1,\Delta X_{S_k}\in B_1, X_{S_k}- \dunderline X_{S_k}\in B_2\cap (0,\Delta X_{S_k})}
%\\&=\sum_k\proba{\underline X_{[S_i,S_k)}\leq X_{S_i-}, i<k,\Delta X_{S_k}\in B_1, X_{S_k}- \dunderline X_{S_k}\in B_2\cap (0,\Delta X_{S_k})}
%\\&=\sum_k\esp{\indi{\underline X_{[S_i,S_k)}\leq X_{S_i-},i<k},\indi{\Delta X_{S_k}\in B_1} \imf{\p_{X_{S_k}}}{\underline X_\infty\in B_2}}
\\&=\sum_k\esp{\indi{\underline X_{[S_i,S_k)}\leq X_{S_i-},i<k}\indi{\Delta X_{S_k}\in B_1} 
%\imf{\p_{X_{S_k-}+\Delta X_{S_k}}}{\underline X_\infty\in B_2}
\int_{B_2\cap (0,\Delta X_{S_k})} be^{-bx}\, dx
}
\\&=\esp{\int_0^\infty  \indi{\underline X_{[S_i,l)}\leq X_{S_i-}\text{ if }S_i<l}  \, dl}\int_{B_1} \int_{B_2\cap (0,y)} be^{-bx}\, dx \, \imf{\pi_\eps}{dy}.
\end{align*}\end{linenomath}In particular, taking $B_1=(\eps,\infty)$ and $B_2=(0,\infty)$, we see that
\begin{lesn}
\proba{\Delta X_{T_1}\in dy, X_{T_1}- \dunderline X_{T_1}\in dx}
=\frac{be^{-by}}{\int_\eps^\infty (1-e^{-by})\, \imf{\pi}{dy}}\indi{\eps,x\leq y}\, dx\, \imf{\pi}{dy}. 
\end{lesn}

Finally, as mentioned before, we have shown that $\Xi^f$ is a Poisson point process with intensity $\imf{\delta_0}{dy}\imf{\tilde n}{df}+c \int_0^y e^{-bx}\imf{\q_x^\#}{df}\, dx \imf{\pi}{dy}$. 
Note that the point process in \eqref{reflectedFutureMinimumPointProcess} is the image of the point process in \eqref{reflectedFutureMinimumPointProcessWithJumps} under the mapping $(s,y,f)\mapsto (s,f)$, which shows that $\tilde n=\beta n^\#$ and that $c=1$, so that the intensity of $\Xi^f$ is precisely $\mu^f$. 
\end{proof}
\begin{remark}
The proof in \cite{MR1883717} that allows us to conclude that $c=1$ depends on the theory of scale functions. 
A more simple argument would be to substitute the proof of Lemma 9 in \cite{MR1883717} for the proof of Lemma 1.2.1 in \cite{MR1954248}. 
Furthermore, elementary computations as the ones we used to compute the law of $(V_1,O_1)$ allow us to conclude that the post minimum process under $\p_x^\rightarrow$ has law $\p^\rightarrow$, thereby making the above arguments more self-contained. 
\end{remark}

\subsection{A grafting construction for the genealogy under $\Upsilon_{\text{tree}}$ and the corresponding Ray-Knight theorem}
In this subsection, we present the proof of Theorem \ref{RayKnightTheorem} and Corollary \ref{RayKnightCorollary}. 
%Recall that $\gamma^{\text{lc}}$ denotes the normalized restriction of the law of supercritical L\'evy trees 
Recall that $\gamma^{\text{lc}}$ stands for the limit of trees coded by the height process $H$ under the image of $\Upsilon_{\text{tree}}$ under truncation at height $r$ as $r\to\infty$. 
The strategy will be similar: we first prove a Poisson description of $\gamma^{\text{lc}}$. 
The difference will be that the Poisson description will only be recursive. 
We then use this Poisson description, 
  as well as the known Ray-Knight theorems under $n^{\#}$ and $\q^{\#}_x$, 
to conclude. 
To do this, we will need the notion of concatenation of trees, which is a particular form of grafting, performed at the root. 
First, if $f$ and $g$ are excursions in $\excspace$, we define their concatenation $f\vee g$ by
\begin{lesn}
\imf{f\vee g}{t}=\begin{cases}
f(t)& 0\leq t<\imf{\zeta}{f}\\
g(t-\imf{\zeta}{f})& \imf{\zeta}{f}\leq t<\imf{\zeta}{f}+\imf{\zeta}{g}\\
\dagger& t\geq \imf{\zeta}{f}+\imf{\zeta}{g}
\end{cases}. 
\end{lesn}Then, if $\tr{c}_1$ and $\tr{c}_2$ are two TOM trees coded by $f_1$ and $f_2$, we define $\tr{c}_1\vee \tr{c}_2$ as the tree coded by $f_1\vee f_2$. 
Finally, if  $\tr{c}_1$ and $\tr{c}_2$ are two locally compact TOM trees with coding sequence $(f_1^n)$ and $(f_2^n)$ we let $\tr{c}_1\vee \tr{c}_2$ have coding sequence $(f_1^n\vee f_2^n)$. The image of the product measure $\gamma_1\times \gamma_2$ on locally compact TOM trees under concatenation is denoted $\gamma_1\vee \gamma_2$. 
In our Poissonian description, we will use the measure $\gamma^k_z$ given by
\begin{lesn}
\gamma^k_z
=\int_{0=z_0<z_1<\cdots <z_k<z_{k+1}=z} \gamma^\#_{z_1-z_0}\vee \gamma^{\text{lc}}\vee \gamma^\#_{z_2-z_1}\vee \gamma^{\text{lc}} \vee \cdots \vee\gamma^{\text{lc}}\vee  \gamma^\#_{z_{k+1}-z_k} \, dz_1\,\cdots\, dz_k. 
\end{lesn}This measure corresponds to the intertwinning of $k+1$ compact trees and $k$ locally compact trees and uses the measure $\gamma^{\text{lc}}$ in its definition. 
\begin{proposition}
\label{PoissonDescriptionSuperCriticalLevyTreeProposition}
%Let $G=\paren{(\tau,d,\rho),\leq,\mu}$ have law $\gamma^{lc}$. 
Let $\Xi^1=\sum \delta_{(r^1_n,f^1_n)}$, $\Xi^2=\sum \delta_{(r^2_n,f^2_n)}$ and $\Xi^3=\sum \delta_{(r_n,f^l_n,f^r_n)}$ be Poisson Point processes with intensities $\nu^c$, $\nu^c+\beta b \gamma^{\text{lc}}$ and $\nu^d$ where
\begin{linenomath}
\begin{align*}
\nu^c&=\beta \gamma^\#
\intertext{and}
\imf{\nu^d}{A\times B}&=
  \sum_{k=0}^\infty
    \int %(b(y-x))^k   
    b^k e^{-by}\indi{x\leq y}\imf{\gamma^\#_x}{A} 
    \imf{\gamma^k_{y-x}}{B} \, dx\, \imf{\pi}{dy}. 
\end{align*}
\end{linenomath}On the TOM tree $[0,\infty)$ rooted at zero, graft the trees coded by $f^1_n$ and $f^l_n$ to the left at heights $r^1_n$ and $r_n$ and graft the trees coded by $f^2_n$ and $f^r_n$ to the right at heights $r^2_n$ and $r_n$. 
The resulting TOM tree has law $\gamma^{\text{lc}}$.
\end{proposition}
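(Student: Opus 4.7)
The plan is to bootstrap Proposition \ref{PoissonDescriptionForGenealogyOfSINTreeProposition} via the recursive definition of $\Upsilon_\text{tree}$: a tree with law $\Upsilon_\text{tree}$ is obtained from one with law $\Upsilon$ by grafting iid copies of $\Upsilon_\text{tree}$ to the right of the latter's unique chronological infinite line of descent $\{\dunderline{X}^0_t: t\geq 0\}$ at the atoms of an independent Poisson process of chronological intensity $b\,dh$. Conditioning on the $\Upsilon$-tree, these graft positions split disjointly into those falling in the range of continuity of $\dunderline{X}^0$ and those falling inside jumps of $\dunderline{X}^0$, the latter coinciding with the discontinuous excursions parametrized by the point process $\Xi^f$ of Proposition \ref{characterizationOfExcursionsAboveFutureMinimumProposition}. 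Passing to the genealogy, each extra graft contributes (by the very definition of $\gamma^\text{lc}$) an independent subtree of law $\gamma^\text{lc}$, to be superposed on the Poisson description of $G$ already obtained in Proposition \ref{PoissonDescriptionForGenealogyOfSINTreeProposition}.

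For the continuous-range contribution, the identity $\beta t=\leb(\{\dunderline{X}^0_s: s\geq 0\}\cap[0,\dunderline{\tau}_t])$ used in the final step of the proof of Proposition \ref{PoissonDescriptionForGenealogyOfSINTreeProposition} translates the chronological intensity $b\,dh$ on the range of $\dunderline{X}^0$ into an intensity $\beta b\,dt$ on the local-time coordinate of the infinite line of descent of $G$, which is the height coordinate along the spine. This is exactly the extra summand $\beta b\,\gamma^\text{lc}$ appended to $\nu^c$ in the intensity of $\Xi^2$. The intensities $\nu^c$ of $\Xi^1$ and the subcritical part of $\Xi^2$ are inherited verbatim from Proposition \ref{PoissonDescriptionForGenealogyOfSINTreeProposition}, since $\Upsilon_\text{tree}$-grafts go only to the right of the spine.

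For the jump contribution, at each atom $(y,x,f)$ of $\Xi^f$ --- with $y=\Delta X^0$ the jump, $x=f(0)\in[0,y]$ the overshoot, and joint intensity $e^{-bx}\indi{x\leq y}\,dx\,\pi(dy)$ on $(y,x)$ --- the corresponding chronological spine interval has length $y-x$ and carries an independent Poisson$(b(y-x))$ number $K$ of $\Upsilon_\text{tree}$-grafts at uniformly distributed positions. Conditional on $K=k$ at ordered positions $0<z_1<\cdots<z_k<y-x$, the right-hand genealogical subtree at the branch point --- which in Proposition \ref{PoissonDescriptionForGenealogyOfSINTreeProposition} had law $\gamma^\#_{y-x}$ --- is replaced by the concatenation $\gamma^\#_{z_1}\vee\gamma^\text{lc}\vee\gamma^\#_{z_2-z_1}\vee\cdots\vee\gamma^\text{lc}\vee\gamma^\#_{y-x-z_k}$. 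Summing the Poisson weight $e^{-b(y-x)}(b(y-x))^k/k!$ against the uniform order-statistic density $k!/(y-x)^k$ collapses factorials and leaves $b^k e^{-b(y-x)}\gamma^k_{y-x}$; combining with the left-side contribution $e^{-bx}\gamma^\#_x(A)$ and using $e^{-bx}e^{-b(y-x)}=e^{-by}$ produces exactly the claimed intensity $\nu^d(A\times B)=\sum_k\int b^k e^{-by}\indi{x\leq y}\gamma^\#_x(A)\gamma^k_{y-x}(B)\,dx\,\pi(dy)$.

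The main obstacle is the apparent circularity: $\gamma^\text{lc}$ appears on both sides of the Poisson description, both explicitly in the extra summand of the intensity of $\Xi^2$ and implicitly through the $\gamma^k$ factors. I plan to resolve this by invoking the uniqueness of $\Upsilon_\text{tree}$ as the fixed point of its defining recursion (already established earlier in the paper), which propagates through the deterministic height-process construction of Section \ref{heightSection} to give uniqueness of $\gamma^\text{lc}$. The argument above then verifies that the Poisson description of the statement is a solution of the induced fixed-point identity, so it must coincide with $\gamma^\text{lc}$. Some additional care is needed that the independence of the chronological Poisson graft process from the underlying $\Upsilon$-tree survives the coordinate change from chronological to local-time height, which follows from the strong Markov property of $(X^0,\dunderline{X}^0)$ developed in the proof of Proposition \ref{characterizationOfExcursionsAboveFutureMinimumProposition}.
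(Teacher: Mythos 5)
Your proposal is correct and follows essentially the same route as the paper: bootstrapping from the Poisson description of $G$ under $\Upsilon$, splitting the extra $\Upsilon_{\text{tree}}$-grafts according to whether they land in the range or in a jump gap of $\dunderline{X}^0$, and performing the same Poisson/order-statistics computation to obtain $\nu^d$ and the extra $\beta b\,\gamma^{\text{lc}}$ term via the drift identity for the ladder height. The only (harmless) difference is your fixed-point uniqueness argument for the apparent circularity, which the paper sidesteps by noting that the grafted subtrees' genealogies have law $\gamma^{\text{lc}}$ by definition, so the description is merely recursive rather than circular.
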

\begin{proof}
Let us recall that $\Upsilon_{\text{tree}}$ is obtained from $\Upsilon$ by grafting, 
to the right of the unique infinite line of descent, 
iid trees with law $\Upsilon_{\text{tree}}$ at rate $b$. 
Specifically, if $S_\emptyset$ has law $\Upsilon$ and $S^*_1,S^*_2,\ldots$ are iid with law $\Upsilon^{\text{tree}}$ and we graft $S_i^*$ to the right of $S_\emptyset$ at height $T_i$, where $(T_i-T_{i-1})$ are iid exponentials with rate $b$, to obtain $S^*$, then $S^*$ has law $\Upsilon^{\text{tree}}$. 
We now use the Poisson description of the genealogical tree associated to $S_\emptyset$ stated as Proposition \ref{PoissonDescriptionForGenealogyOfSINTreeProposition} and use very similar arguments to prove the present proposition; 
only the differences will be explained. 
Let $G_\emptyset$ and $G^*_1,G^*_2,\ldots$ be the genealogical trees associated to $S_\emptyset$ and $S^*_1,S^*_2,\ldots$. 
We have already identified the parts of the tree $S_\emptyset$ that give rise to the Poisson description of $G_\emptyset$. 
It only remains to see how $G^*_1,G^*_2,\ldots$ are grafted to the right of the unique infinite line of descent of $G_\emptyset$. 
For this, suppose that the left of the infinite line of descent of $S_\emptyset$ is coded by $X^{\emptyset,0}$ (which has law $\p^\rightarrow$). Recall that the infinite line of descent of $S_\emptyset$ was identified with the heights $\dunderline{X}^{\emptyset, 0}_t$ corresponding to $t$ such that $X^{\emptyset, 0}_t=\dunderline{X}^{\emptyset, 0}_t$. 
These heights leave open gaps corresponding to the jumps of $\dunderline{X}^{\emptyset,0}$ and anything grafted on these gaps gets contracted to the same point when considering the genealogy. However, additionally to what is grafted on these gaps to form $S_\emptyset$, we now graft independently the trees $(S_i)$ at rate $b$. 
More formally, suppose that $\Delta\dunderline{X}^{\emptyset,0}_t>0$, where $X^{\emptyset,0}_t-\dunderline{X}^{\emptyset,0}_{t}=x$, $X^{\emptyset,0}_t-\dunderline{X}^{\emptyset,0}_{t-}=y$ (so that $x\leq y$) and where the minimum of $X^{\emptyset,0}$ on $[t,\infty)$ is reached at $\dunderline{d}_t$. 
Then, the quantity $K$ of trees $(S_i)$ that get grafted to the right of the gap $(\dunderline{X}^{0,\emptyset}_{t-},\dunderline{X}^{0,\emptyset}_{t} )$ (of size $y-x$) equals $k$ with probability $e^{-b(y-x)}(b(y-x))^k/k!$. 
Conditionally on $K=k$ (recall that $k$ can be zero), the heights $a+z_1\leq \cdots\leq a+z_k$ at which they are grafted are the order statistics of $k$ iid uniform random variables on $(0,y-x)$, hence have density $k!/(y-x)^k$ on the adequate simplex. 
Then, when passing to the genealogy, what gets grafted to the infinite branch point are iid processes with laws $\gamma_x^\#$ (to the left) and, in alternating fashion, $\gamma^{\#}_{z_k-z_{k-1}}, \gamma^{\text{lc}},\ldots, \gamma^{\#}_{z_2-z_1}, \gamma^{\text{lc}}$ and $\gamma^\#_{z_1-z_0}$. 
Using the description of the jumps and overshoots of $X^{\emptyset,0}$ above its cumulative infimum process of Proposition \ref{characterizationOfExcursionsAboveFutureMinimumProposition}, 
we see that infinite branch points get grafted along the leftmost infinite line of descent in $\gamma^{\text{lc}}$ as a Poisson point process with the intensity $\nu^d$ of the statement. 

On the other hand, the $(S_i)$ that get attached along the infinite line of descent of $S_\emptyset$, not on a gap but at a height of the form $\dunderline{X}^{0,\emptyset}_t$ for some $t$, when passing to the genealogy, corresponds to a tree with law $\gamma^{\text{lc}}$ that gets attached at height $\dunderline{L}_t$. 
%(where $\dunderline{L}$ is the local time of $X^{\emptyset,0}-\dunderline{X^{\emptyset,0}}$ at zero
As in Proposition \ref{PoissonDescriptionForGenealogyOfSINTreeProposition}, we see that the trees $G^*_1,G^*_2,\ldots$  not grafted at infinite branch points get grafted as a Poisson point process along the leftmost infinite line of descent of $G_\emptyset$ at rate $\beta b$. 
Together with the Poisson description of $\Upsilon$, we deduce our statement. 
\end{proof}

% Here put a corollary with the regenerative property. 
% Here, also say that a much simpler measure puts all compact trees to the left and describe its law. Say that the RK theorem follows from any single one but that the second one does not have the regenerative property. 

Armed with our Poisson description of $\gamma^{\text{lc}}$ we can give a proof of the Ray-Knight theorem for this measure. 
As before, we suppose that $\Gamma=\paren{\paren{\tau,d,\rho},\leq,\rho}$ has measure $\gamma^{\text{lc}}$ and set $\imf{\delta}{\sigma}=\imf{d}{\sigma,\rho}$ for any $\sigma\in\tau$. 
Recall that the pair $(Z^1,Z^2)$ is defined by letting $Z^1_t$ be the quantity of prolific individuals at distance $t$ from the root of our supercritical L\'evy tree $\Gamma$,  and that $Z^2$ is the density of $\mu\circ\delta^{-1}$ with respect to Lebesgue measure. 
\begin{proof}[Proof of Theorem \ref{RayKnightTheorem}]

Let us turn to the analysis of the bivariate process $(Z^1,Z^2)$ under $\gamma^{\text{lc}}$. 

We first describe the semigroup $\imf{P_t}{(n,z),\cdot}$ that will be relevant. 
For $(n,z)$ let $Z^z$ be a $\imf{\cb}{\Psi^\#}$ process starting at $z$ and, for $i$ between $1$ and $n$, let $(Z^{i,1}, Z^{i,2})$ have the same law as $(Z^1,Z^2)$. 
Furthermore, assume independence for these $n+1$ processes. 
Now, define 
\begin{lesn}
\imf{P_t}{(n,z),\cdot}\text{ as the law of }(Z^{1,1}_t+\cdots +Z^{n,1}_t,Z^z_t+Z^{1,2}_t+\cdots+Z^{n,2}_t). 
\end{lesn}Because of the branching property of  $\imf{\cb}{\Psi^\#}$, we see that $(P_t,t\geq 0)$ has the following branching property:
\begin{lesn}
\text{ the convolution }\imf{P_t}{(n_1,z_1),\cdot}*\imf{P_t}{(n_2,z_2),\cdot}\text{ equals } \imf{P_t}{(n_1+n_2,z_1+z_2),\cdot}. 
\end{lesn}Hence, to prove both that $(Z^1,Z^2)$ is a two-type branching process and that $(P_t)$ is a semigroup, it suffices to prove that $(Z^1,Z^2)$ is Markovian with transition kernels $(P_t)$.

To prove that $(Z^1,Z^2)$ is Markovian, we will add the results obtained on each infinite line of descent. 
Suppose that $Z^1_t=n$, so that there are $n$ infinite lines of descent intersecting height $t$. 
Recall that as a consequence of the spine decomposition of CBI, 
the contribution after $t$ of each spine naturally decomposes as the contribution of the trees that attach below $t$ 
(evolving as a $\imf{\cb}{\Psi^\#}$), 
and the contribution from each spine above $t$, evolving as a $\imf{\cbi}{\Psi^\#,\Phi}$. 
Recall that spines are independent. 
Thanks to the branching property, the first contribution then evolves as a $\imf{\cb}{\Psi^\#}$ started at $Z^2_t$, while the second contribution evolves as a $\imf{\cbi}{\Psi^\#,n\Phi}$. 
Their sum is therefore independent of $Z^1$ and $Z^2$ on $[0,t]$ given $(Z^1_t,Z^2_t)$ and evolves using the transition kernels  $P$ we have just described. 
We conclude that $(Z^1,Z^2)$ is a two-type branching process, where $Z^1$ is piecewise constant and non-decreasing. 
The same argument proves that $Z^1$ is a branching process all by itself  whose jump rates are determined by the Poisson description of Proposition \ref{PoissonDescriptionSuperCriticalLevyTreeProposition} and equal those in the statement of Theorem \ref{RayKnightTheorem}. 

We now compute the infinitesimal generator of $(Z^1,Z^2)$. 
For this, we decompose at the first jump $T$ of $Z^1$. 
Suppose that $\Delta Z^1_T=n$, so that we get $n$ additional infinite lines of descent; 
we additionally obtain some compact trees, which thanks to the branching property, make a jump of $Z^2$. 
When $Z^1=1$, the Poisson description of Proposition \ref{PoissonDescriptionSuperCriticalLevyTreeProposition} tells us that 
a jump of $(Z^1,Z^2)$ of size in $\set{n}\times A$ arrives at rate 
\begin{lesn}
\beta b\indi{n=1}+\int_A \frac{b^n x^{n+1}}{(n+1)!}\, \imf{\pi}{dx}. 
\end{lesn}Since on any interval on which $Z^1$ equals $n$, $Z^2$ behaves as a $\imf{\cbi}{\Psi^\#,n\Phi}$, we see that if $\imf{f}{n,z}=s^ne^{-\lambda z}$ then
\begin{linenomath}
\begin{align*}
\left.\frac{d}{dt}\right|_{t=0} \imf{P_t f}{n,z}
&= e^{-\lambda z}s^n \bra{z \imf{\Psi^\#}{\lambda}-n\imf{\Phi}{\lambda}}
\\&+ e^{-\lambda z}s^n n\bra{s-1}\beta b
\\&+\sum_{n=1}^\infty \int_0^\infty \bra{\imf{f}{n+\tilde n,z+\tilde z}-\imf{f}{n,z} }n \frac{b^n x^{n+1}}{(n+1)!}\, \imf{\pi}{dx}. 
\end{align*}
\end{linenomath}The geometric series and algebraic manipulations (based on the equality $\imf{\Psi}{b}=0$ and the definition of $\Phi$) 
then let us write the above as
\begin{lesn}
\beta bs(s-1)-2\beta\lambda s+\frac{1}{b}\int_0^\infty e^{-(\lambda+b(1-s))x} -e^{-(\lambda+b)x}+se^{-b x} -s\, \imf{\pi}{dx}.
\end{lesn}

In \cite{MR2455180}, the two-type branching process with values on $\na\times [0,\infty)$ has a semigroup characterized by
\begin{lesn}
\imf{\tilde P_tf}{n,z}=
e^{-z \bra{\imf{u_t}{\lambda+b}-b}} \bra{\frac{1}{b}\bra{\imf{u_t}{\lambda+b}-\imf{u_t}{\lambda+b(1-s)}}}^n. 
\end{lesn}where the function $u_t$ satisfies
\begin{lesn}
\imf{u_t}{\lambda}=\lambda-\int_0^t \imf{\Psi}{\imf{u_t}{\lambda}}. 
\end{lesn}We then observe that the infinitesimal generator of $\tilde P_t$ satisfies:
\begin{lesn}
\left. \frac{d}{dt}\imf{\tilde P_tf}{n,z}\right|_{t=0}=e^{-\lambda x}s^n\bra{z\imf{\Psi}{\lambda +b}}
+e^{-\lambda x}ns^{n-1}\frac{1}{b}\bra{\imf{\Psi}{\lambda +b(1-s)}-\imf{\Psi}{\lambda+b}}. 
\end{lesn}Again, algebraic manipulations show us that the generators of $P_t$ and $\tilde P_t$ at $f$ are the same. 
%By density of linear combinations of exponential-type functions obtained by varying $s$ and $\lambda$ in $(0,1)\times (0,\infty)$, 
By the monotone class theorem (or its functional version as in \cite[Thm. 0.2.4]{MR1725357})
we conclude that $P_t$ and $\tilde P_t$ coincide. 
\end{proof}

\bibliography{GenBib}
\bibliographystyle{amsalpha}
\end{document}